\documentclass{amsart}
\usepackage{amssymb,amsmath,amsthm,latexsym}
\usepackage{amsfonts}
\usepackage{graphicx}
\usepackage{hyperref}
\usepackage{tikz}
\usepackage{tikz-cd}
\usepackage[hypcap=false]{caption}
\usepackage[margin=1in]{geometry}
\usepackage{enumitem}
\usepackage[T1]{fontenc}
\usepackage{mathrsfs}  
\usepackage[capitalise]{cleveref}

\usepackage[
backend=biber,
style=alphabetic,
sorting=nyt
]{biblatex}
\usepackage{booktabs}
\usepackage{tabularx}
\newtheorem{theorem}{Theorem}[section]

\newtheorem{corollary}[theorem] {Corollary}
\newtheorem{definition}[theorem]{Definition}

\newtheorem{lemma}[theorem]{Lemma}

\newtheorem{proposition}[theorem]{Proposition}
\newtheorem{remark}[theorem]{Remark}

\vspace{5cm}
\begin{document}
  
  \label{'ubf'}  
\setcounter{page}{1}                                 

\markboth {\hspace*{-9mm} \centerline{\footnotesize \sc
  Conductors and Quadratic base change  }
                 }
                { \centerline                           {\footnotesize \sc  
                   Siddharth Ramakrishnan Cherukara
                                                       } \hspace*{-9mm}              
               }

\vspace*{-2cm}

\begin{center}
{ 
       {\Large \textbf { \sc 
    Conductors and Quadratic base change}
       }
\\

\medskip

{\sc Siddharth Ramakrishnan Cherukara }\\
{\footnotesize Department of Mathematics}\\
{\footnotesize University of Oklahoma
}\\
{\footnotesize e-mail: {\it siddharth.ramakrishnan@ou.edu}}
}
\end{center}
\thispagestyle{empty}
\hrulefill
\thispagestyle{empty}
\hrulefill
\begingroup
\makeatletter
  \normalfont\Small
  \list{}{\labelwidth\z@
    \leftmargin3pc \rightmargin\leftmargin
    \listparindent\normalparindent \itemindent\z@
    \topsep\z@ \partopsep\z@
    \parsep\z@ \@plus\p@}%
  \item[\hskip\labelsep\scshape Abstract.]%
\makeatother
Let $F$ be a totally real field and $K/F$ a totally real quadratic extension. For a Hilbert cusp newform $f$ over $F$ with base change $f_K$ to $K,$ we determine the exact level of $f_K$ in terms of the level of $f,$ using only local representation theory: for each place $\mathfrak p$ of $F$ we compute how the conductor of the local component of $f$ changes under quadratic base change. The tame case (odd residue characteristic) follows from existing base change theory, but the case $p=2$ requires a finer $\varepsilon$-factor analysis that does not appear in the literature. We give a complete treatment of the dyadic imprimitive representations and, more delicately, the exceptional supercuspidal representations, for which no closed-form base change formula was previously known; in particular we determine, in terms of local root numbers, the base change of every exceptional supercuspidal representation of $GL(2,\mathbb Q_2)$ with trivial central character and conductor $3.$ This yields an explicit formula for the level of $f_K,$ specialized here to $F=\mathbb Q,$ along with a partial converse.
\endlist
\endgroup
\section{Introduction}
Let $S^{\text{new}}_{k}(N,\chi)$ denote the space of weight $k,$ level $N$ cusp forms with nebentypus $\chi.$ In \cite{MR253990}, Doi and Naganuma described, for given $f \in S^\text{new}_{k}(N),$ how to associate a \emph{Hilbert modular form} $F_{f}$ over $K,$  where $K$ is a totally real quadratic extension of $\mathbb{Q}.$ The Doi--Naganuma lifting was extended by Saito in \cite{MR406936} to the case where $K$ is a totally real cyclic extension of prime degree over $\mathbb{Q}$ satisfying some assumptions. This was seen as an instance of Langlands' functoriality principles. Let $F$ be a totally real number field, and let $K$ be an extension of $F,$ and let $\mathfrak{p}$ and $\mathfrak{P}$ be finite places of $F$ and $K$ respectively such that $\mathfrak{P}$ lies over $\mathfrak{p}.$ Locally, base change is a map from irreducible admissible representations of $GL(n,F_\mathfrak{p})$ to irreducible admissible representations of $GL(n,K_{\mathfrak{P}}),$ and globally is a map from cuspidal automorphic representations of $GL(n,\mathbb{A}_F)$ to those of $GL(n,\mathbb{A}_K).$The complete description  for $GL(2)$ was given in \cite{MR574808}, following \cite{MR406936}, using a local interpretation due to \cite{MR546611}. The case of $GL(n)$ was established in \cite{MR1007299}. Both works dealt with the cases where $K/F$ is a Galois extension of prime degree. In \cite{MR1685898} and \cite{MR2130587}, the authors extended the lift to cover the cases where $K,F$ are local fields and $K/F$ is any tamely ramified extension, and the notions agree with \cite{MR1007299} when the domains overlap. Our main problem is as follows.

Let $F$ be a totally real field,$\mathfrak{o}_F$ be its ring of integers and let $N$ be an ideal in $\mathfrak{o}_F.$Let $K$ be a totally real quadratic extension of $F.$

\paragraph{Let $f$ be a cuspidal newform over $F$ of weight $\textbf{k}$ and level $N.$ For a prime ideal $\mathfrak{p}$in $\mathfrak{o}_F$ not dividing $N,$ let $a_\mathfrak{p},b_\mathfrak{p}$ denote the Satake parameters of $f$at $\mathfrak{p}.$  We say that a modular form $g$ over $K$ of weight $\underline{\textbf{k}}=(\textbf{k,k})$ is a base change from $f$if, for all but finitely many $\mathfrak{p}$ and $\mathfrak{P}\mid \mathfrak{p},$ the Satake parameters $a_\mathfrak{P},b_\mathfrak{P}$ are $f(\mathfrak{P}/\mathfrak{p})\text{-th}$ powers of $a_\mathfrak{p},b_\mathfrak{p},$ where $f(\mathfrak{P}/\mathfrak{p})=[\mathfrak{o}_K/\mathfrak{P}:\mathfrak{o}_F/\mathfrak{p}]$   
} 
\label[definition]{def:base change}

\subsection*{Main Problem} Let $K/\mathbb{Q}$ be a quadratic extension. Let $N_{K/\mathbb{Q}}:K^\times \rightarrow \mathbb{Q}^\times$ denote the norm map. For every $f\in S^\text{new}_{\textbf{k}}(N,\chi)$ we can associate a newform $f_K$ in $S_{\underline{\textbf{k}}}^\text{new}(\mathfrak{N},\chi_{K}),$ the space of weight $\underline{\textbf{k}}$  Hilbert modular forms over $K$ for a suitable level $\mathfrak{N},$ where $\chi_K=\chi\circ N_{K/\mathbb{Q}}$ as in the above definition. How does the level $\mathfrak{N}$ depend on $f$?
\subsection{Method of proof} We approach this problem using only local representation theory. Let $F_\mathfrak{p}$ be a $p$-adic field. To each irreducible admissible representation $\pi$ of $GL(2,F_\mathfrak{p}),$ we associate a non-negative integer $c(\pi)$ called the \emph{conductor of} $\pi$ by \cite{MR337789}. We investigate how the conductor of a representation changes under local base change by a quadratic extension. By the work of Langlands, to each $f$ in $S^\text{new}_{k}(N)$ we can associate a representation $\bigotimes \pi_{p},$ where $\pi_{p}$ is an irreducible admissible representation of $GL(2,\mathbb{Q}_{p}),$ such that $N=\prod_{p} p^{c(\pi_{p})}.$ Thus, the local question completely resolves the global problem. The behaviour of the conductor under base change is completely understood in the case of tamely ramified extensions in \cite{MR1685898} and easily follows from \cite{MR1007299}. However the wildly ramified case is more delicate and requires finer analysis. Since our focus is on applications to modular forms, we will do a finer analysis in the case $p=2.$  This is difficult for several reasons. \begin{itemize}
    \item We can study the conductors via $\varepsilon\text{-factors}$ attached to these representations. These are not easy to describe when $p=2.$
    \item There are \emph{exceptional supercuspidal} representations in this setting, whose base changes do not admit descriptions as simple as those of the other cases.
    \item There are $2^{[F_\mathfrak{p}:\mathbb{Q}_{2}]+2}-1$ quadratic extensions of $F_\mathfrak{p}$ when $\mathfrak{p}$ is dyadic, giving more cases and calculations to consider.
\end{itemize}
These cases are dealt with in Sections 4 and 5, which form the heart of the paper. Although the results in the odd prime case follow easily from the existing literature, the $p=2$ case does not appear in the literature. In particular, the exceptional supercuspidal cases are especially difficult since there is no closed-form expression of base change of these representations. In Section 5 we also determine the explicit base change of an exceptional representation of $GL(2,\mathbb{Q}_2)$ with trivial central character and conductor $3,$ in terms of the $\varepsilon$-factors associated to these representations by Langlands. Both sections mentioned above are novel and to the best of our knowledge, have not appeared in the literature before. Using explicit constructions such as that in \cite{MR466025}, bounds on the level of Hilbert modular forms obtained by  Doi--Naganuma lifts have been obtained. However, the exact level of the newform associated via base change has not been addressed before. In \cite{MR3911789}, the authors study the problem of lifting elliptic modular forms of level $N$ to a real quadratic field $K$ of discriminant $D$, under the assumptions $(N,D)=1$ and $D\equiv 1\pmod 4$. Their results agree with ours in this setting; however, our results apply in greater generality. We also remark that similar problems about conductors under functorial transfers of modular forms have been considered before by \cite{MR4356848} and \cite{MR4878181} in the case of $\text{Sym}^3$ transfer. In particular, some of our notation follows \cite{MR4878181}. We also remark that a similar problem could be studied regarding  base change of representations of $GL(n,F_\mathfrak{p}).$ In this case the existence of the conductor of a representation $\pi,$ $c(\pi)$ was dealt with for $n>2$ by \cite{MR620708} when $\pi$ is generic and \cite{MR4522693} for $\pi$ non-generic. Our results in Section 3 completely deal with the case where $K_\mathfrak{P}$ is a tamely ramified extension of $F_\mathfrak{p}.$
\subsection{Main Results}We summarize the main theorems below under certain assumptions. The results in complete generality can be found in Section 6. For a Hilbert modular form $f$ over a totally real field $F,$ we define its \emph{exact level} to be the conductor of the automorphic representation of $GL(2,\mathbb{A}_F)$ associated to $f.$
\begin{theorem}\label{thm:1.1}
Let $F$ be a totally real number field,$f$ a Hilbert modular form over $F$
 of exact level $N$ and nebentypus $\chi.$Let $K$ be a totally real quadratic extension of $F.$For each dyadic prime $\mathfrak{p}$ of $F,$assume that one of the following holds:    \begin{enumerate}
        \item $v_\mathfrak{p}(N)=0,$  or
        \item $\mathfrak{p}$ is unramified in $K.$
    \end{enumerate} Let $\mathcal{R}$ denote all primes dividing $N$ which are ramified in $K,$ and $\mathcal{S}$ denote all primes dividing $N$ which are unramified in $K.$ Additionally assume that when $\mathfrak{p}\in \mathcal{R},$ and $\pi_\mathfrak{p}$ is not supercuspidal, then $v_\mathfrak{p}(N)>2.$  Then the exact level of base change lift of $f$ to $K,$ $f_K$ is given by  $$\mathfrak{N}=\prod_{\substack{\mathfrak{P}\mid \mathfrak{p}\\{\mathfrak{p}\in \mathcal{R}}}} \mathfrak{P}^{2(v_{\mathfrak{p}}(N)-1)}\prod_{\substack{\mathfrak{P}\mid \mathfrak{p}\\ \mathfrak{p} \in \mathcal{S}}}\mathfrak{P}^{v_{\mathfrak{p}}(N)}$$
\end{theorem}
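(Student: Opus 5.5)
The plan is to reduce the global statement to a local computation at each prime and then use the tame results of Section 3 together with elementary conductor formulas. The automorphic representation attached to $f_K$ is the global base change $\Pi=\mathrm{BC}_{K/F}(\pi)$ of $\pi=\bigotimes_\mathfrak p\pi_\mathfrak p$, and locally $\Pi_\mathfrak P=\mathrm{BC}_{K_\mathfrak P/F_\mathfrak p}(\pi_\mathfrak p)$ (or $\Pi_\mathfrak P=\pi_\mathfrak p$ when $\mathfrak p$ splits). It therefore suffices to show, for every $\mathfrak P\mid\mathfrak p$, that $c(\Pi_\mathfrak P)$ equals $2(v_\mathfrak p(N)-1)$ if $\mathfrak p\in\mathcal R$, equals $v_\mathfrak p(N)$ if $\mathfrak p\in\mathcal S$, and equals $0$ otherwise. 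I will work throughout on the Galois side, with the $2$-dimensional Weil--Deligne representation $\rho=\rho_\mathfrak p$ attached to $\pi_\mathfrak p$. Local Langlands is compatible with base change, so $c(\Pi_\mathfrak P)=a(\rho|_{W_{K_\mathfrak P}})$, and I write $a(\rho)=\dim(\rho/\rho^{I})+\mathrm{sw}(\rho)$ (plus the monodromy correction in the special case).

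The unramified situations come first. If $\mathfrak p\nmid N$ and $\mathfrak p$ is unramified in $K$, then $\rho$ is unramified and so is its restriction, giving conductor $0$. If $\mathfrak p\nmid N$ and $\mathfrak p$ ramifies, $\rho$ is still unramified (here I use that $v_\mathfrak p(N)=0$), so the conductor is again $0$. If $\mathfrak p\in\mathcal S$, either $\mathfrak p$ splits, in which case $\Pi_\mathfrak P\cong\pi_\mathfrak p$, or $K_\mathfrak P/F_\mathfrak p$ is unramified. In the unramified case $I_{K_\mathfrak P}=I_{F_\mathfrak p}$ and the upper ramification filtrations agree, because Herbrand's function is the identity when $e=1$. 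Hence $a(\rho|_{W_{K_\mathfrak P}})=a(\rho)$; this applies to dyadic $\mathfrak p$ as well, so no $\varepsilon$-factor analysis is needed.

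For $\mathfrak p\in\mathcal R$ the hypothesis forces $\mathfrak p$ to be non-dyadic, so $K_\mathfrak P/F_\mathfrak p$ is tamely ramified with $e=2$. I would then invoke the tame results of Section 3 (equivalently, the Herbrand function $\psi(u)=2u$), which give $\mathrm{sw}(\rho|_{W_K})=2\,\mathrm{sw}(\rho)$. In the supercuspidal case $\rho$ is irreducible with $\rho^{I}=0$, so $a(\rho)=2+\mathrm{sw}(\rho)$. Provided $\rho|_{I_K}$ still has no invariants, this gives $a(\rho|_{W_K})=2+2\,\mathrm{sw}(\rho)=2(a(\rho)-1)$, as required.
\begin{itemize}
\item To show there are no invariants, write $\rho=\mathrm{Ind}_{W_L}^{W_F}\theta$ for $p$ odd. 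If $L$ is unramified, $\theta|_{I}$ does not factor through the norm, so $\theta^{q}\neq\theta$ on inertia, and passing to the index-$2$ subgroup $I_K$ cannot kill $\theta$.
\item If $L\neq K$ is ramified, the restriction stays irreducible.
\item If $L=K$, then $\rho|_{W_K}=\theta\oplus\theta^\sigma$, and the induction formula $a(\rho)=d(K/F)+a(\theta)=1+a(\theta)$ gives $2a(\theta)=2(a(\rho)-1)$ directly.
\end{itemize}

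The non-supercuspidal ramified case is where I expect the main obstacle, and it is why the hypothesis $v_\mathfrak p(N)>2$ appears. For a character $\chi$ with $a(\chi)\ge2$, tame base change gives $a(\chi\circ N)=2a(\chi)-1$. When $a(\chi)=1$, however, $\chi\circ N$ may become unramified, so the answer is not uniform. For a twisted Steinberg $\chi\mathrm{St}$ with $\chi$ ramified we have $c=2a(\chi)$, and the target $2(c-1)$ has to be matched against $2(2a(\chi)-1)$. For ramified principal series $\chi_1\boxplus\chi_2$ the target has to be matched against $(2a(\chi_1)-1)+(2a(\chi_2)-1)$. These match exactly when both characters are ramified. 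If one constituent is unramified, the count changes by one, and I would check carefully against the Section 3 tables whether the statement's normalization (the nebentypus, or the general form in Section 6) excludes or absorbs this configuration. I expect this case bookkeeping, rather than any deep input, to be the delicate part. Finally, I will multiply the local conductors over all $\mathfrak P$ to obtain $\mathfrak N$.
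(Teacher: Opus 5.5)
Your reduction is the same as the paper's: localize, use that unramified base change preserves the conductor on $\mathcal S$ (dyadic primes included), and for $\mathfrak p\in\mathcal R$, which is necessarily odd, use the tame formulas. Your Swan-conductor identity $a(\rho|_{W_{K_\mathfrak P}})=\operatorname{codim}\rho^{I_{K_\mathfrak P}}+2\,\mathrm{sw}(\rho)$ is a fine replacement for Theorem 3.2, and your supercuspidal and twisted Steinberg cases go through.

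The gap is the ramified principal series case. You explicitly leave it open, and it cannot be closed from the hypotheses as written. By your own identity, $a(\rho|_{W_{K_\mathfrak P}})=2(a(\rho)-1)$ holds iff $\operatorname{codim}\rho^{I_{K_\mathfrak P}}=2\operatorname{codim}\rho^{I_{F_\mathfrak p}}-2$. So for $\rho=\chi_1\oplus\chi_2$ with $v_\mathfrak p(N)>2$ you need both $\chi_i|_{I_{K_\mathfrak P}}$ to be nontrivial. Hence your remark that the counts ``match exactly when both characters are ramified'' is also not right: a constituent with $a(\chi_i)=1$ and $\chi_i=w_{K_\mathfrak P/F_\mathfrak p}$ on $\mathfrak o^\times_{F_\mathfrak p}$ becomes unramified after base change.

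Concretely, take $\mathfrak p$ odd and ramified in $K$:
\begin{enumerate}
\item $\pi_\mathfrak p=\pi(\chi_1,1)$ with $a(\chi_1)=3$ has $v_\mathfrak p(N)=3$, but $a(\pi_K)=5\neq 4$.
\item $\pi_\mathfrak p=\pi(\chi_1,\chi_2)$ with $a(\chi_1)=1$, $\chi_1=w_{K_\mathfrak P/F_\mathfrak p}$ on units, and $a(\chi_2)=2$ has $v_\mathfrak p(N)=3$, but $a(\pi_K)=0+3\neq 4$.
\end{enumerate}
Both examples have ramified central character. The statement allows an arbitrary nebentypus $\chi$, so it does not exclude them.

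The missing ingredient is the hypothesis that principal series components have trivial central character, i.e.\ (H1) of Section 6. The paper's proof uses it silently by invoking Theorem 3.3(2), which is stated only for $w_\pi=1$. Under (H1) you get $\pi_\mathfrak p=\pi(\chi,\chi^{-1})$ with $a(\chi)=v_\mathfrak p(N)/2\ge 2$. Then $a(\chi^{\pm1}\circ N_{K_\mathfrak P/F_\mathfrak p})=2a(\chi)-1$, so $a(\pi_K)=4a(\chi)-2=2(v_\mathfrak p(N)-1)$, which closes your argument. Without it, you must either add that hypothesis or replace the exponent at such primes by $a(\chi_1\circ N)+a(\chi_2\circ N)$; no normalization absorbs the discrepancy.

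A smaller point: your bullet ``if $L\neq K$ is ramified, the restriction stays irreducible'' fails for triply imprimitive $\rho$. These exist for odd $p$ already at conductor $2$, and such $\rho$ is also induced from $K$. Say instead that $\rho|_{W_{K_\mathfrak P}}$ is irreducible, and hence has no $I_{K_\mathfrak P}$-invariants, unless $\rho\cong\rho\otimes w_{K_\mathfrak P/F_\mathfrak p}$. In that case $\rho$ is induced from $K$ and your third bullet applies.
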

Let us specialize to the case $F=\mathbb{Q}.$
Let $K=\mathbb{Q}(\sqrt{D})$ be of discriminant $D.$

 \begin{theorem}\label{thm:1.2}
Let $F=\mathbb{Q}$ and $K=\mathbb{Q}(\sqrt{D})$ with $D>0$ and $D \equiv 0 \pmod{4}.$ Let $$\mathcal{R}=\{p \in \mathbb{Z}: p\mid D \text{ and } p\mid N\}$$
$$\mathcal{S}=\{p \in \mathbb{Z}: p\nmid D \text{ and } p\mid N\}$$
Let us restrict to the case  $v_p(N)> 2v_p(D)$ for all $p$ in $\mathcal{R}.$ Then
   $$\mathfrak{N}=\prod_{\substack{p\in \mathcal{R}\\ \mathfrak{P}\mid p\\}} \mathfrak{P}^{2(v_p(N)-v_p(D))}\prod_{\substack{p\in \mathcal{S}\\ \mathfrak{P}\mid p}}   \mathfrak{P}^{v_p(N)}.$$
\end{theorem}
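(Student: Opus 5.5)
The plan is to reduce Theorem~\ref{thm:1.2} to a local statement at each prime $p\mid N$ and to treat $p=2$ separately. As in the method of proof, the level of $f_K$ is $\prod_{\mathfrak P} \mathfrak P^{c(\mathrm{BC}(\pi_p))}$, where $\pi_p$ is the local component of $f$ and $\mathrm{BC}$ denotes local base change to $K_{\mathfrak P}$. So it suffices to compute $c(\mathrm{BC}(\pi_p))$ prime by prime.

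\emph{Primes $p\ne 2$.} For $p\in\mathcal S$ with $p$ odd, the extension $K_{\mathfrak P}/\mathbb Q_p$ is unramified or split. The tame results of Section~3 (equivalently the unramified case of Theorem~\ref{thm:1.1}) give $c(\mathrm{BC}(\pi_p))=c(\pi_p)=v_p(N)$ at each $\mathfrak P\mid p$. For odd $p\in\mathcal R$ we have $v_p(D)=1$, so the hypothesis $v_p(N)>2v_p(D)$ reads $v_p(N)>2$. This is exactly the extra hypothesis of Theorem~\ref{thm:1.1} in the non-supercuspidal case, so the tame computation gives exponent $2(v_p(N)-1)=2(v_p(N)-v_p(D))$. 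Since $D\equiv 0\pmod 4$, the prime $2$ is ramified in $K$, so $2\notin\mathcal S$ and the only new work is at $p=2\in\mathcal R$. There $v_2(D)=d\in\{2,3\}$ is the discriminant exponent of the totally ramified extension $E=K_{\mathfrak P}/\mathbb Q_2$, and the hypothesis becomes $v_2(N)>2d$.

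\emph{The prime $2$.} I would pass to the Galois side via local Langlands, which is compatible with base change (restriction of $W_{\mathbb Q_2}$-representations to $W_E$) and with conductors (Artin conductors). Let $t$ be the unique ramification break of $E/\mathbb Q_2$, so that $d=t+1$. The Herbrand function satisfies $\psi_{E/\mathbb Q_2}(u)=2u-t$ for $u\ge t$.

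\begin{itemize}
\item Suppose the Langlands parameter $\rho$ of $\pi_2$ is irreducible. This covers the supercuspidal case, including the dihedral imprimitive ones and the exceptional ones, which are primitive. Inertia then has no fixed vectors, and if $\rho$ has a single slope $s$, we get $a(\rho)=2(1+s)$. Since upper breaks of $\rho$ at $u\ge t$ become lower breaks $\psi_{E/\mathbb Q_2}(u)$ over $E$, we get
\[
a(\rho|_{W_E})=2\bigl(1+2s-t\bigr)=2\bigl(a(\rho)-d\bigr)
\quad\text{whenever } s>t,
\]
and $s>t$ is equivalent to $a(\rho)>2d$. So $v_2(N)>2d$ forces all slopes strictly above $t$, and gives the asserted exponent $2(v_2(N)-v_2(D))$. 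The point of the hypothesis is that it pushes us past the delicate range where the exceptional supercuspidals need the $\varepsilon$-factor analysis of Section~5. I would still cross-check the formula against the explicit computations of Sections~4 and~5, and handle a possible second slope by splitting $\rho|_{I}$ into isotypic pieces.
\item For principal series $\chi_1\boxplus\chi_2$ and twisted Steinberg $\chi\,\mathrm{St}$, the same computation for characters gives $a(\chi\circ N_{E/\mathbb Q_2})=2a(\chi)-d$ when $a(\chi)>d$. Unramified characters stay unramified, and $\chi\,\mathrm{St}$ has conductor $2a(\chi)$ when $\chi$ is ramified. Hence the formula follows whenever every ramified constituent has conductor exceeding $d$.
\end{itemize}

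\emph{Main obstacle.} The hard step is the residual principal-series situation, e.g.\ $\chi_1\boxplus\chi_2$ with $a(\chi_1)\le d<a(\chi_2)$. There the naive additivity can fail, because $\chi_1\circ N$ may lose conductor, and the total $a(\chi_1)+a(\chi_2)>2d$ does not force each piece above the break. I would first try to twist-minimize $\pi_2$, using that base change commutes with twisting and $c(\mathrm{BC}(\pi\otimes\mu))$ is computable. If that does not suffice, I would invoke the complete local tables of Section~4 for dyadic non-supercuspidal representations to confirm (or restrict to) the cases where the exponent is exactly $2(v_2(N)-v_2(D))$. Combining all the local exponents then yields $\mathfrak N$.
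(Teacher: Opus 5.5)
\textbf{The gap: the principal-series case.} This is a genuine gap, and neither twist-minimizing nor the Section~4 tables can close it, because without an extra hypothesis the formula is false there. The loss of $d$ in $a(\chi\circ N_{E/\mathbb{Q}_2})=2a(\chi)-d$ happens once per \emph{ramified} constituent. So your claim that the formula follows ``whenever every ramified constituent has conductor exceeding $d$'' fails when one constituent is unramified.

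Take $\pi_p=\pi(\chi_1,\chi_2)$ with $\chi_1$ unramified and $a(\chi_2)=a(\pi_p)>2d$. Then $a((\pi_p)_K)=2a(\pi_p)-d$, not $2(a(\pi_p)-d)$. For example:
\begin{itemize}
\item $p$ odd and $a(\chi_2)=3$ gives $5$ instead of $4$;
\item $p=2$, $d=2$ and $a(\chi_2)=5$ gives $8$ instead of $6$.
\end{itemize}
Your range $a(\chi_1)\le d<a(\chi_2)$ also fails in other ways: $a(\chi_1)=d=2$, $a(\chi_2)=3$ gives $0+4=4$ instead of $6$. Such components do occur, e.g.\ for newforms whose nebentypus has conductor exactly $p^{v_p(N)}$.

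The missing ingredient is the paper's standing hypothesis \textbf{(H1)}: principal-series components have trivial central character. Theorem~1.2 is the specialization of the Section~6 theorem, which assumes all of \textbf{(H1)}--\textbf{(H3)}; only \textbf{(H1)} matters when $a(\pi)>2d$. The same assumption is built into Theorem~3.4(2), so your odd-$p$ step via Theorem~1.1 already uses it without saying so. Under \textbf{(H1)} one has $\pi_p=\pi(\chi,\chi^{-1})$, hence $a(\chi)=a(\pi_p)/2>d$ and $a((\pi_p)_K)=2(2a(\chi)-d)=2(a(\pi_p)-d)$ immediately. Together with the twisted Steinberg case, which you handle correctly, this is how the paper treats non-supercuspidal components.

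\textbf{The supercuspidal part.} Your argument here is correct and genuinely different from the paper's. The paper gets $a(\pi_K)=2(a(\pi)-d)$ case by case:
\begin{itemize}
\item for dihedral $\pi(L,\varkappa)$, via $\pi_K=\pi(LK,\varkappa\circ N_{LK/L})$ (or a principal series when $L\cong K$) and norm-group and discriminant computations in the biquadratic tower;
\item for exceptional $\pi$, by base changing to a cubic extension where $\pi$ becomes dihedral, using the tame formula there, and then twisting to reach non-minimal representations.
\end{itemize}
Your Herbrand-function computation treats all irreducible parameters at once, needs no dihedral/exceptional distinction, and works over any dyadic base.

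To make it airtight, drop the hedge about a second slope. Since $G^u_{\mathbb{Q}_2}$ is normal in $W_{\mathbb{Q}_2}$, the subspace $\rho^{G^u_{\mathbb{Q}_2}}$ is either $0$ or all of $\rho$, so an irreducible $\rho$ automatically has a single slope. Then use $G_E^{\psi_{E/\mathbb{Q}_2}(u)}=G_E\cap G^u_{\mathbb{Q}_2}$ (upper numbering on $G_E$, not lower breaks) together with $G^u_{\mathbb{Q}_2}\subset G_E$ for $u>t$.

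What the paper's explicit route gives, and yours does not, is control in the range $a(\pi)\le 2d$. There $G_E\cap G^u_{\mathbb{Q}_2}$ has index $2$ in $G^u_{\mathbb{Q}_2}$ and new fixed vectors can appear. That range is excluded in this statement, so you do not need it.
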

As we remarked before, we believe our main contribution is the comprehensive treatment when the local representation is an exceptional supercuspidal representation. We present a simple case of our results in the following. 
\begin{theorem}
   Let $f$ be an elliptic newform over $\mathbb{Q}$ of level $2^n$ such that $\pi_2$ is an exceptional supercuspidal representation. Let $K$ be a quadratic extension of $\mathbb{Q}$ of discriminant $D$ such that $2$ is ramified in $K.$ Let $\mathfrak{P}$ be the prime above $2.$ Assume that $n\neq 2v_2(D).$ Let $f_{K}$ be the newform over $K$, associated to $f$ via base change. Then the exact level of $f_K$ is given by
   $$\mathfrak{N}=\begin{cases}
       \mathfrak{P}^n& \text{if }n<2v_2(D), \\
       \mathfrak{P}^{2(n-{v_2(D)})}& \text{otherwise.}
   \end{cases}$$
\end{theorem} 
Now let us provide an explicit description of local base change in the case where $\pi$ is a supercuspidal representation of $GL(2, \mathbb{Q}_2)$ with $a(\pi) = 3$ and trivial central character. This is the hardest case treated in this paper: when the extension is wildly ramified, explicit base change of an exceptional representation is difficult to pin down, and in general no closed-form description is known. The theorem below resolves it completely for this exceptional representation.

 Let $\pi$ be a representation of $GL(2,F_p)$ with trivial central character, and let  $\varepsilon(\pi)$ be its local root number, which is given by $\pm1.$ Let $K_\mathfrak{P}$ be a ramified quadratic extension of $\mathbb{Q}_2$ and $\pi$ an exceptional supercuspidal representation of $GL(2,K_{\mathfrak{P}}).$ Up to twisting, there are $4$ equivalence classes of $\pi,$  namely one class of each conductor $3$and$5$ and two classes of conductor $7.$ Thus when $a(\pi)=3$ and $\pi$ has trivial central character, then there are two such $\pi$ up to isomorphism which have opposite signs for $\varepsilon(\pi).$ 
\begin{theorem}
Let $\pi$ be an exceptional supercuspidal representation of $GL(2,\mathbb{Q}_2)$ with trivial central character and $a(\pi)=3.$ For a ramified quadratic extension $K_{\mathfrak{P}}$ of $\mathbb{Q}_2,$ $\pi_{K_\mathfrak{P}}$ is the exceptional supercuspidal representation of $GL(2,K_{\mathfrak{P}})$ with $a(\pi_{K_{\mathfrak{P}}})=3$ and $$\varepsilon(\pi_{K_{\mathfrak{P}}})=\begin{cases}
    -\varepsilon(\pi)& \text{ if } d(K_\mathfrak{P}/\mathbb{Q}_2)=2,\\
     \varepsilon(\pi)& \text{ if } d(K_\mathfrak{P}/\mathbb{Q}_2)=3.
\end{cases}$$
\end{theorem}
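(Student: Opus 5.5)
Work on the Galois side via the local Langlands correspondence. Base change corresponds to restricting the $2$-dimensional Weil--Deligne representation $\sigma=\sigma(\pi)$ of $W_{\mathbb{Q}_2}$ to $W_{K_\mathfrak{P}}$, where $K_\mathfrak{P}$ is a ramified quadratic extension of $\mathbb{Q}_2$. Since $\pi$ is exceptional with trivial central character, $\sigma$ has $\det\sigma=1$ and primitive projective image. For $a(\pi)=3$ the projective image is $A_4$, not $S_4$; the $S_4$ cases have larger conductor. So $\sigma$ factors through $SL(2,\mathbb{F}_3)$, the binary tetrahedral group $\tilde A_4$. Concretely, let $L/\mathbb{Q}_2$ be the $A_4$-extension cut out by the projective image. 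Then $L$ contains the unramified cubic extension $E=\mathbb{Q}_8$ of $\mathbb{Q}_2$, and $\mathrm{Gal}(L/E)\cong V_4$.

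\textbf{Step 1: the restriction stays exceptional.} The image of $W_{K_\mathfrak{P}}$ in $A_4$ is a subgroup of index at most $2$, hence equal to $A_4$, since $A_4$ has no subgroup of index $2$. So $\sigma|_{W_{K_\mathfrak{P}}}$ is still primitive with projective image $A_4$, and $\pi_{K_\mathfrak{P}}$ is exceptional with trivial central character.

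\textbf{Step 2: conductor.} Compute the Artin conductor of the restriction from the ramification filtration, using Herbrand functions. Transitivity gives $a(\sigma|_{K_\mathfrak{P}})=2a(\sigma)-2d(K_\mathfrak{P}/\mathbb{Q}_2)$ when the ramification breaks of $\sigma$ dominate those of $K_\mathfrak{P}$. Instead, identify the conductor using the classification quoted just before the theorem: over a ramified quadratic $K_\mathfrak{P}$, twist-minimal exceptional representations have conductor $3$, $5$ or $7$, and conductor $3$ is the tetrahedral one. I would first show twist-minimality of the restriction, then pin down $3$. Do this by passing to $EK_\mathfrak{P}$ and computing the upper breaks of the $V_4$-extension $LK_\mathfrak{P}/EK_\mathfrak{P}$. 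The quadratic subextensions of $L/E$ have discriminant exponent $2$, corresponding to conductor $3$. Under the tamely unrelated base extension by $K_\mathfrak{P}$, one checks using the Herbrand function that the break is unchanged when it already "absorbs" $K_\mathfrak{P}$. This is the main consistency check I would carry out.

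\textbf{Step 3: root number.} Since $\det=1$, $\varepsilon(\sigma)=\pm1$. For induced-type representations one would use inductivity, but $\sigma$ is primitive, so I would use the Frobenius-type relation below. Restrict to $E$, where $\sigma|_{W_E}=\mathrm{Ind}_{W_M}^{W_E}\theta$ for a quadratic extension $M\subset L$ of $E$. Deligne's formula for induced representations computes $\varepsilon(\sigma|_E)$ via Gauss sums of $\theta$ and $\lambda(M/E)$. Since $E/\mathbb{Q}_2$ is unramified of odd degree, $\varepsilon(\sigma)$ and $\varepsilon(\sigma|_E)$ agree, because odd-degree unramified base change preserves symplectic root numbers. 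The same holds for $K_\mathfrak{P}$ and $EK_\mathfrak{P}$. Thus it suffices to compare $\varepsilon(\mathrm{Ind}\,\theta)$ with $\varepsilon(\mathrm{Ind}\,\theta\circ N_{MK/M})$. By Deligne--Henniart, this reduces to computing
$$\frac{\lambda(MK_\mathfrak{P}/EK_\mathfrak{P})}{\lambda(M/E)}\cdot\frac{\varepsilon(\theta\circ N,\psi\circ\mathrm{Tr})}{\varepsilon(\theta,\psi)}.$$
An alternative is a Fröhlich--Queyrut type identity: $\varepsilon(\mathrm{Res}\,\sigma)=\varepsilon(\sigma)^2\cdot\varepsilon(\sigma\otimes\omega_K)\varepsilon(\sigma)^{-1}\cdots$. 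More simply, use $\varepsilon(\sigma|_K)=\varepsilon(\sigma)\,\varepsilon(\sigma\otimes\omega_K)\,\lambda(K/\mathbb{Q}_2)^{2}\omega_K(-1)$, where $\omega_K$ is the quadratic character of $K_\mathfrak{P}$. This holds because $\mathrm{Ind}\,\mathrm{Res}\,\sigma=\sigma\oplus\sigma\otimes\omega_K$ and $\det\sigma=1$.

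This last route is what I would actually do. Then $\varepsilon(\pi_{K_\mathfrak{P}})/\varepsilon(\pi)=\varepsilon(\pi\otimes\omega_K)\,\omega_K(-1)$, using $\lambda^2=\omega_K(-1)$. So the problem reduces to computing the root number of the twist $\pi\otimes\omega_K$, which is again exceptional with trivial central character.

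\textbf{Main obstacle.} The hardest step is evaluating $\varepsilon(\pi\otimes\omega_K)\omega_K(-1)$ for the six ramified $\omega_K$. I would split according to $d=2$ ($\mathbb{Q}_2(\sqrt{-1}),\mathbb{Q}_2(\sqrt{3})$) versus $d=3$ ($\mathbb{Q}_2(\sqrt{\pm2}),\mathbb{Q}_2(\sqrt{\pm6})$). For $d=3$, $\omega_K$ has conductor $3$, and the twist has conductor $3$. I expect $\pi\otimes\omega_K\cong\pi$ up to unramified twist, or at least equal root number with $\omega_K(-1)$ compensating, giving $+1$. For $d=2$, the twist stays at conductor $3$, but the ratio is $-1$. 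I would verify this either by explicit Tunnell/Bushnell--Henniart formulas for conductor-$3$ tetrahedral $\varepsilon$-factors, or globally. For the global check, take $f$ of level $8$-type exceptional at $2$ with known sign, such as weight-$2$ forms of level $2^3\cdot$odd, use LMFDB twists, and apply the product formula for global root numbers to extract the local sign.
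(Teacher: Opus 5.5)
Your Step 3 reduction is correct and is exactly the first step of the paper's proof. From $\mathrm{Ind}_{W_K}^{W_{\mathbb{Q}_2}}(\sigma|_{W_K})\cong\sigma\oplus(\sigma\otimes\omega_K)$ and inductivity in degree zero,
$\varepsilon(\pi_K)=\lambda(K/\mathbb{Q}_2)^{-2}\varepsilon(\pi)\varepsilon(\pi\otimes\omega_K)=\omega_K(-1)\varepsilon(\pi)\varepsilon(\pi\otimes\omega_K)$. The paper cites Arthur--Clozel, Prop.~6.9, and Bushnell--Henniart (30.4.3) for this. Your displayed intermediate formula has the factor $\lambda^{2}\omega_K(-1)$, which equals $1$; it should be $\lambda^{-2}$.

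\textbf{The missing step.} After this reduction, the entire content of the theorem is the value of $\varepsilon(\pi\otimes\omega_K)$, and you never supply it. The paper takes it from Tunnell's Lemma~3.8 (Amer.\ J.\ Math.\ 1983): $\varepsilon(\pi\otimes\omega_K)=-\omega_K(-1)$ if $d=2$, and $\varepsilon(\pi\otimes\omega_K)=\omega_K(-1)$ if $d=3$. With that, the proof is one line.

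Your heuristics for this step are also wrong. Since $\pi$ is minimal, $a(\pi\otimes\omega_K)=\max\{3,2d\}$, which is $4$ or $6$, not $3$. In particular $\pi\otimes\omega_K$ is not an unramified twist of $\pi$. The evenness is forced, not incidental. Let $\mu$ be the unramified quadratic character. Then $\varepsilon(\pi\otimes\mu)=-\varepsilon(\pi)$. Also $\mu\circ N_{K/\mathbb{Q}_2}$ is the unramified quadratic character of $K^\times$ and $a(\pi_K)=3$, so $\varepsilon((\pi\otimes\mu)_K)=-\varepsilon(\pi_K)$. Hence the left side $\varepsilon(\pi_K)/\varepsilon(\pi)$ is unchanged under $\pi\mapsto\pi\otimes\mu$. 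If $a(\pi\otimes\omega_K)$ were $3$, as you expect, the right side $\omega_K(-1)\varepsilon(\pi\otimes\omega_K)$ would change sign, contradicting your own identity. The proposed global LMFDB computation is not carried out; as stated it is only a plausibility check.

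\textbf{The structural error.} An exceptional $\pi$ of $GL(2,\mathbb{Q}_2)$ with $a(\pi)=3$ is octahedral, not tetrahedral; the paper states this, citing \v{C}esnavi\v{c}ius--Neururer--Saha and Tunnell. Here is why. A tetrahedral $\sigma$ becomes triply imprimitive over the unramified cubic $E$, and its conductor does not change. At least two of the three quadratic extensions $L'/E$ involved are ramified. For such $L'$, $a(\sigma)=a(\varkappa)+d(L'/E)\geq 2d(L'/E)\geq 4$.

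So the $V_4$-extension $L/E$, the Herbrand-function argument of Step 2, and the Deligne--Henniart route through $E$ are all set up over the wrong field. The relevant cubic is the tamely ramified $L_1=\mathbb{Q}_2(\sqrt[3]{2})$. That is how the paper gets $a(\pi_K)=3$ in its theorem on minimal exceptional representations: it computes $a(\pi_{L_1K})=5$ through the dihedral representation $\pi_{L_1}$, then uses $a(\pi_{L_1K})=3a(\pi_K)-4$.

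Your Step 1 conclusion survives, because the subgroups of index at most $2$ in $S_4$ are $S_4$ and $A_4$, both primitive. Once $a(\pi_K)=3$ is known, Tunnell's uniqueness result settles the identification. It says there are exactly two such representations of $GL(2,K)$ with trivial central character, distinguished by their root numbers. Everything then reduces to the sign computation above.
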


\section*{{Acknowledgements}}
 I would like to thank my advisor, Kimball Martin, for suggesting this problem and for his invaluable guidance. I am also grateful to him for carefully going through an earlier version of the paper and helping to improve the exposition. 
\section{Some preliminaries}

\subsection{Local representation theory}
Let $F$ be a non-archimedean local field with an additive valuation $v_F,$  $\mathfrak{o}_F$ be the set of elements with non-negative valuation, $\mathfrak{p}$ be the set of elements  with positive valuation and $q_F,$ the residue cardinality. This valuation induces a topology on $F$ with $\mathfrak{p}^{i}$ forming a basis of open neighbourhoods of $0$. We induce a metric on $F,$ by defining $$\vert x\vert_F=q^{-v_F(x)}.$$ 
 Let $W_F$ denote the Weil group of $F$. We have a canonical map, the \emph{Artin homomorphism} from $W_F$ to $F^\times,$ which we denote by $\text{Art},$ given by local class field theory.
In this section, we recall some basic preliminaries of the representation theory of $GL(2,F)$, the group of $2\times 2$ invertible matrices with entries in $F.$ By the Local Langlands Correspondence (abbreviated as LLC), these representations correspond to $2$-dimensional semisimple Weil--Deligne representations of $W_F.$ Thus, we briefly describe both objects.

    A \emph{representation} of $W_F$ is a continuous homomorphism $$\sigma:W_F\rightarrow \text{Aut}_\mathbb{C}(V),$$ where $V$ is a finite-dimensional vector space over $\mathbb{C}.$ A \emph{Weil--Deligne representation} is a pair $(\sigma,N)$ such that $\sigma$ is a representation of $W_F$ and $N$ is a $\mathbb{C-}$endomorphism of $V$ such that for each $x$ in $W_F$ we have $$\sigma(x)N\sigma(x)^{-1}=\vert x\vert N,$$ where $\vert x\vert=\vert \text{Art}^{-1}(x)\vert_F.$

To these representations we associate what are called the $\varepsilon$-factors.  We do not need the full generality of these factors; however, we note some of the important properties of them.  $\varepsilon(\sigma,s,\psi) $is a Laurent polynomial in $q^s$, for a complex number $s.$  This function satisfies the following properties
\begin{itemize}
    \item We have 
    \begin{equation}
    \varepsilon(\sigma,s,\psi)=cq_{F}^{(nc(\psi)-a(\sigma))s},
    \end{equation} 
for some constant $c.$The integer $a(\sigma)$ is uniquely determined, and is called the \emph{Artin conductor} of $\sigma.$
    \item Let $\sigma=\sigma_1\oplus \sigma_2....\sigma_n,$ then $a(\sigma)=a(\sigma_1)+a(\sigma_2)...+a(\sigma_n).$ 
    \item If $\sigma$ is irreducible, then $a(\sigma)\geq \dim \sigma.$
\end{itemize}
The main purpose of introducing these $\varepsilon$-factors is that analogous factors can be defined for representations of $GL(n,F),$ and we require the bijection given by the Local Langlands correspondence to respect these factors, rather than merely being a set-theoretic bijection. We are not going to work with the full representation theory of $GL(n,F)$ since our focus is mostly on modular forms. Thus, we restrict to the case $n=2$ and denote $GL(2,F)$ by $G.$ We now introduce the following subgroups of $G,$ which play an important role in the representation theory of $G.$ 
\begin{align}
 B&=\{\begin{bmatrix}
     a & b \\
     0 & d
 \end{bmatrix}:a,b,d\in F \text{ and } ad\neq0\},\\
 T&=\{\begin{bmatrix}
     a & 0 \\
     0 & d
 \end{bmatrix}:a,d\in F \text{ and } ad\neq0\},\\
\Gamma_{0}(m)&=\{\begin{bmatrix}
     a & b \\
     c & d
 \end{bmatrix}:c\equiv0\pmod{\mathfrak{p}^{m}}\}\subset GL(2,\mathfrak{o}), \text{ and }\\
 \Gamma_{0}(0)&=GL_{2}(\mathfrak{o}).
\end{align}

     For $g=\begin{bmatrix}
     a & b \\
     0 & d
 \end{bmatrix} \in B$, let $$\delta_{B}(g)=(\vert\dfrac{a}{d}\vert)^{\tfrac{1}{2}},$$ the \emph{modular quasicharacter of }$B$.

    Let $\chi$ be a character of $F^{\times}.$ The \emph{conductor} of $\chi,$ denoted by $a(\chi),$ is the smallest integer $i$ such that $\chi$ is trivial on $\mathcal{U}_{i}=1+\mathfrak{p}^{i}.$

 Let $(\pi,V)$ be a complex representation of $G,$ i.e  $V$ is a $\mathbb{C}$-vector space and $\pi:G\rightarrow Aut_{\mathbb{C}}(V)$ is a group homomorphism. The representation $\pi$ is
    \begin{enumerate}
        \item \emph{smooth} if for every $v \in V$, there exists a compact open subgroup $U_{v}$ such that $\pi(u).v=v,$ for all u in $U_{v}.$
        \item \emph{admissible} if $\pi$ is smooth and for any compact open subgroup $K$ of $G,$ the space of $K$-fixed vectors, $\pi^{K}$ is finite dimensional.
    \end{enumerate}

    For an irreducible admissible representation $\pi$ of $G,$ we call the \emph{conductor of $\pi$}, denoted by $a(\pi),$ the smallest integer $m$ such that $\pi^{\Gamma_{0}(m)}\neq0.$ 

We know that every irreducible smooth representation of $GL(n,F)$ is admissible.
LLC gives a canonical bijection between $n$-dimensional Weil--Deligne representations and irreducible admissible representations of $GL(n,F).$ This correspondence preserves the $\varepsilon\textit{-factors},$ and thus the conductors themselves.

From now on we omit the adjectives and simply write representations for irreducible smooth(hence admissible) representations of $GL(2,F).$ We now classify all such representations of $G.$ 
    
\subsubsection{One dimensional representations:} These representations factor through the determinant map. Hence they are of the form $\chi \circ \det,$ where $\chi$ is a character of $F^{\times}.$ These representations do not arise as local components of automorphic forms and hence are irrelevant for our purpose. We also have $a(\chi \circ det)=a(\chi).$ By LLC they correspond to characters of $W_F,$ via local class field theory.
\subsubsection{Parabolically induced representations:} These are representations which can be obtained as a subquotient of a representation induced from a representation of $B.$ They can be parametrized as follows. Let $\chi_{1},\chi_{2}$ be characters of $F^{\times}.$ Then define $\chi:B \rightarrow \mathbb{C^{\times}},$ such that $\chi(\begin{bmatrix}
    a&b\\
    0&d
\end{bmatrix})=\chi_{1}(a)\chi_{2}(d).$ Let us denote 
$$B(\chi_{1},\chi_{2})=\{f:G\rightarrow\mathbb{C};f(b.g)=\chi(b)\delta_{B}(b)f(g), \forall b \in B, \forall g \in G\}$$ on which $G$ acts via right translations. The representation $B(\chi_{1},\chi_{2})$ is \begin{enumerate}
    \item irreducible if $\chi_1\chi_2^{-1}\neq \vert.\vert^{\pm}$. This is called a \emph{principal series representation}. We denote this representation by $\pi(\chi_1,\chi_2).$
    \item admits a unique irreducible subrepresentation/subquotient otherwise. Such a representation is called \emph{special representation}. We denote the unique irreducible subquotient of $B(\vert.\vert^{\tfrac{1}{2}},\vert.\vert^{-\tfrac{1}{2}})$ by ${St}.$
\end{enumerate} We also have $a(\pi(\chi_{1},\chi_{2}))=a(\chi_{1})+a(\chi_{2})$ and $$a({St}\otimes \chi)=\begin{cases}
    1& \chi \text{ is unramified}\\
    2a(\chi)& \chi \text{ is ramified}\\
\end{cases}$$ By LLC they correspond to the reducible $2$-dimensional representations of $W_F.$
\subsubsection{Supercuspidal representations} These are representations of $G$ which are not of the above two types. We refer the reader to \cite{MR2234120} for an explicit construction of these representations. These correspond to the irreducible $2$-dimensional representations of $W_F.$ They can be divided into two types.
\paragraph{\emph{Dihedral supercuspidal representations:}} These representations can be parametrized as $\pi(L,\varkappa),$ where $L$ is a quadratic extension of $F,$ and $\varkappa$ is a character of $L^{\times}$ such that $\varkappa\neq \varkappa \circ \sigma$ where $\sigma$ is the non-trivial $F$-automorphism of $L.$ By \cite{MR476703} Lemma 4.4 this also means that $a(\varkappa)\geq d(L/F).$ By LLC these correspond to the representation $\text{Ind}^{W_F}_{W_L}(\varkappa).$ For such a representation $a(\pi(L,\varkappa))=f(L/F)a(\varkappa)+d(L/F),$ where $f(L/F)$ is the degree of the residue class field extension of $L/F$ and $d(L/F)$ is the valuation of the discriminant of the extension. This means that $$a(\pi)=\begin{cases}
    2a(\varkappa)& L \text{ is unramified,}\\
    a(\varkappa)+d(L/F)& L\text{ is ramified.}\\
\end{cases}$$
\paragraph{\emph{Exceptional supercuspidal representations:}}These representations occur only when $F$ is dyadic. The construction of these representations can be found in \cite{MR734781}. We defer their precise  definition to \cref{def:exceptional}. By LLC they correspond to $2$-dimensional representations of the Weil group, which are not induced. The explicit description of this correspondence for the case $F=\mathbb{Q}_2$ can be found in \cite{MR506171} Chapter 3.
\subsection{Base change of representations}
There are various ways to define base change.  As indicated in the introduction there are two components, local base change and the global base change.
\begin{samepage}
    \paragraph{Local base change}
Let $F$ be a local field, $K$ be an extension of $F$. Let $\lambda_F$ (resp. $\lambda_K$) be the respective maps from irreducible admissible representations of $GL_n(F)$ (resp. $GL_n(K)$) to $n$-dimensional representations of $W_F$ and $W_K$, respectively. Let $\pi$ be an irreducible admissible representation of $GL_n(F)$. Then $\text{Ift}_K(\pi)$ (denoted by $\pi_K$) is an irreducible admissible representation of $GL_n(K)$ such that
\[
\lambda_K\bigl(\text{Ift}_K(\pi)\bigr)
=
\lambda_F(\pi)\vert_{W_K}.
\]
\[
\begin{tikzpicture}[baseline=(current bounding box.center)]
\node (irrF) at (0,1) {$\mathcal{A}_n(F)$};
\node (wdF) at (3,1) {$\mathcal{S}_n(F)$};
\node (irrK) at (0,0) {$\mathcal{A}_n(K)$};
\node (wdK) at (3,0) {$\mathcal{S}_n(K)$};

\draw[->] (irrF) -- node[above] {$\lambda_F$} (wdF);
\draw[->] (irrF) -- node[left] {$\text{lft}_K$} (irrK);
\draw[->] (irrK) -- node[below] {$\lambda_K$} (wdK);
\draw[->] (wdF) -- node[right]{$\vert_{W_K}$}(wdK);
\end{tikzpicture}\]
\end{samepage}
\paragraph{Global Base Change.}
    Let $\pi=\bigotimes_{v}\pi_{v}$ be an automorphic representation of $GL(n,\mathbb{A}_F).$ Then $\Pi=\bigotimes_{w}\Pi_w$ is a $K/F$-lift of $\pi,$ if for all $w\mid v, \Pi_w=\text{lft}_{K_w}(\pi_v).$

\begin{remark}
The fact that the definition above agrees with our earlier definition when $n=2,F=\mathbb{Q}, K$ is a quadratic extension follows from strong multiplicity one for $GL(n).$ 
\end{remark}

Here let us summarize the main results for base change of representations of $G$ to a quadratic extension $K/F.$ Our approach follows \cite{MR546613} where the author also gives a sketch of the proof. A more complete treatment can be found in \cite{MR574808} and in \cite{MR1007299} where the authors consider general $GL(n).$ In what is to follow let us fix a quadratic extension $K$ and let $\pi_{K}$ denote the base change of $\pi$ to $GL(2,K).$
For a character $\chi$ of $F^{\times}$ we write $\chi_{K/F}$ for the character $\chi\circ N_{K/F}.$ We can regard the correspondence $\chi \rightarrow \chi_{K/F},$ as a base change of 1-dimensional representations of the Weil group $W_{F}.$ 
\begin{theorem}
    \begin{enumerate}
        \item For $\pi=\pi(\chi_{1},\chi_{2})$ we have $\pi_{K}=\pi((\chi_{1})_{K/F},(\chi_{2})_{K/F}).$
        \item For $\pi=\text{St}\otimes \chi$ we have $\pi_{K}=\text{St}_K\otimes\chi_{K/F}.$
        \item For $\pi=\pi(L,\varkappa)$ we have $$\pi_{K}=\begin{cases}
            \pi(\varkappa,\varkappa\circ \sigma)& \text{ if } L\cong K,\\
            \pi(LK,\varkappa_{LK/L}) & \text{ if } L\ncong K.\\
        \end{cases}$$
        \item For $(\pi \otimes \chi)_K=\pi_K\otimes \chi_K.$
        \item $\pi_K\cong \pi^\prime_K \iff$ there exists a character $\chi$ of $F^\times$ trivial on $N_{K/F}(K^\times)$ such that $\pi \cong \pi^\prime \otimes \chi.$
    \end{enumerate}
\end{theorem}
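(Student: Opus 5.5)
The plan is to prove each item by passing through the Local Langlands Correspondence and the defining property $\lambda_K(\pi_K)=\lambda_F(\pi)\vert_{W_K}$. The argument then reduces to computing restrictions of Weil--Deligne representations from $W_F$ to $W_K$ and translating back. The one fact we use beyond the definitions is that local class field theory is compatible with restriction: for a character $\chi$ of $F^\times$, viewed as a character of $W_F$ via $\text{Art}$, its restriction to $W_K$ corresponds to $\chi\circ N_{K/F}=\chi_{K/F}$. This holds because the transfer map $W_F^{ab}\to W_K^{ab}$ matches the inclusion $F^\times\hookrightarrow K^\times$ dually, so the inclusion $W_K\hookrightarrow W_F$ matches $N_{K/F}$.

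For (1), $\lambda_F(\pi(\chi_1,\chi_2))=\chi_1\oplus\chi_2$. Its restriction is $(\chi_1)_{K/F}\oplus(\chi_2)_{K/F}$, and this corresponds to $\pi((\chi_1)_{K/F},(\chi_2)_{K/F})$. That principal series is irreducible because $(\chi_1\chi_2^{-1})_{K/F}=\vert\cdot\vert_K^{\pm1}$ would force $\chi_1\chi_2^{-1}\vert\cdot\vert_F^{\mp1}$ to be trivial on norms. So it would be trivial or the quadratic character $\omega_{K/F}$, and the latter is excluded by comparing absolute values on units. If the reducible case were hit we would simply get a subquotient, which is still handled by LLC.

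For (2), $\lambda_F(\text{St}\otimes\chi)=(\chi\vert\cdot\vert^{1/2}\oplus\chi\vert\cdot\vert^{-1/2},N)$ with $N\neq0$. Restricting to $W_K$ keeps $N$, and the identity $\vert\cdot\vert_F\circ N_{K/F}=\vert\cdot\vert_K$ gives the parameter of $\text{St}_K\otimes\chi_{K/F}$. Item (4) follows because restriction commutes with tensor products and LLC is compatible with twisting: $\lambda(\pi\otimes\chi)=\lambda(\pi)\otimes\chi$.

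For (3), $\lambda_F(\pi(L,\varkappa))=\text{Ind}_{W_L}^{W_F}\varkappa$, and we apply Mackey's formula to restrict it to $W_K$.
\begin{itemize}
\item If $L\cong K$, then $W_K=W_L$ is normal of index $2$, so the restriction is $\varkappa\oplus\varkappa^\sigma$. This corresponds to $\pi(\varkappa,\varkappa\circ\sigma)$.
\item If $L\ncong K$, then $W_F=W_KW_L$ and $W_K\cap W_L=W_{LK}$, so Mackey gives $\text{Ind}_{W_{LK}}^{W_K}(\varkappa\vert_{W_{LK}})$. By the class field theory fact this equals $\text{Ind}(\varkappa_{LK/L})$, i.e. $\pi(LK,\varkappa_{LK/L})$. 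Here we read $\pi(\cdot,\cdot)$ as the representation attached to the induced parameter, which may be reducible if $\varkappa_{LK/L}$ happens to be Galois-invariant.
\end{itemize}

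For (5), LLC is a bijection, so $\pi_K\cong\pi'_K$ if and only if $\sigma\vert_{W_K}\cong\sigma'\vert_{W_K}$, writing $\sigma=\lambda_F(\pi)$ and $\sigma'=\lambda_F(\pi')$. The converse direction is immediate from (4), since $\chi_{K/F}=1$. For the forward direction we use Clifford theory for the index-$2$ normal subgroup $W_K$. Frobenius reciprocity gives $\text{Hom}_{W_K}(\sigma,\sigma')=\text{Hom}_{W_F}(\sigma,\sigma'\otimes\text{Ind}_{W_K}^{W_F}1)=\text{Hom}(\sigma,\sigma')\oplus\text{Hom}(\sigma,\sigma'\otimes\omega_{K/F})$. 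When $\sigma$ is irreducible, a nonzero $W_K$-isomorphism then forces $\sigma'\cong\sigma\otimes\chi$ with $\chi\in\{1,\omega_{K/F}\}$, and these are exactly the characters trivial on $N_{K/F}K^\times$. The main obstacle is the reducible case, which must also track $N$. There we would decompose into characters and argue directly: for principal series, $\{(\chi_i)_{K/F}\}=\{(\chi'_i)_{K/F}\}$ gives $\chi'_i\in\chi_j\{1,\omega_{K/F}\}$. In the mixed case $\chi'_1=\chi_1\omega_{K/F}$, $\chi'_2=\chi_2$, a single global twist may not exist, so we would check carefully that the statement is intended only up to the stated equivalence, possibly restricting to the cuspidal or Steinberg cases. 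For Steinberg, the $N$-compatibility pins down the ordering, so the twist is uniform.
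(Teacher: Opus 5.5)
The paper gives no proof of this theorem. It quotes it from G\'erardin--Labesse, Langlands and Arthur--Clozel, where base change is set up through the twisted trace formula and character identities rather than through the Langlands correspondence. Since the paper \emph{defines} $\pi_K$ by $\lambda_K(\pi_K)=\lambda_F(\pi)\vert_{W_K}$, your route is the direct proof in that framework: compatibility of the Artin map with restriction, Mackey for (3), and $\operatorname{Ind}_{W_K}^{W_F}\operatorname{Res}\sigma'\cong\sigma'\oplus(\sigma'\otimes\omega_{K/F})$ for (5). Your arguments for (2), (3), (4), and for (5) when $\pi$ is supercuspidal or special, are correct.

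Your caveat in (3) is a genuine correction to the statement. If $\pi(L,\varkappa)$ is also dihedral with respect to $K\not\cong L$ (which forces $\pi$ to be triply imprimitive), then $\varkappa_{LK/L}$ is $\operatorname{Gal}(LK/K)$-invariant and $\pi_K$ is a principal series. For $L\cong K$ you should add why $\pi(\varkappa,\varkappa\circ\sigma)$ is irreducible: $\varkappa/(\varkappa\circ\sigma)$ is trivial on $F^\times$, while $\vert\cdot\vert_K^{\pm1}$ is not.

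Two points need correcting, and in both it is really the statement that must be amended.

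\textbf{Item (1).} Your exclusion of $\chi_1\chi_2^{-1}=\omega_{K/F}\vert\cdot\vert_F^{\pm1}$ is wrong. On units $\vert\cdot\vert_F=1$, so the comparison gives nothing, and the case does occur. Take the irreducible $\pi=\pi(\omega_{K/F}\vert\cdot\vert_F^{1/2},\vert\cdot\vert_F^{-1/2})$. Its restricted parameter is $\vert\cdot\vert_K^{1/2}\oplus\vert\cdot\vert_K^{-1/2}$ with $N=0$, so $\pi_K=1\circ\det$ is one-dimensional. Your fallback sentence points the right way, but the honest statement is that (1) holds exactly when $\chi_1\chi_2^{-1}\neq\omega_{K/F}\vert\cdot\vert_F^{\pm1}$. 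The excluded $\pi$ are not unitarizable even after a twist by $\vert\det\vert^{s}$. So they never occur as local components of cusp forms, which is all the paper uses.

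\textbf{Item (5).} The obstacle you flagged is an actual counterexample, not something more care could fix. The representations $\pi=\pi(1,1)$ and $\pi'=\pi(\omega_{K/F},1)$ both base change to $\pi(1,1)$ on $GL(2,K)$. Yet $\pi'\not\cong\pi$ and $\pi'\not\cong\pi\otimes\omega_{K/F}=\pi(\omega_{K/F},\omega_{K/F})$. Instead of hedging, state what is true.
\begin{enumerate}
\item (5) holds unless both $\pi$ and $\pi'$ are principal series. Your Frobenius reciprocity argument shows that if one parameter is irreducible, so is the other. Restriction preserves $N$, so a special representation can only match another special one. Hence no mixed cases arise.
\item For two principal series, (5) holds under the extra hypothesis $\omega_\pi=\omega_{\pi'}$. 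After reordering, $\chi'_i=\chi_i\varepsilon_i$ with $\varepsilon_i\in\{1,\omega_{K/F}\}$. Equal central characters force $\varepsilon_1=\varepsilon_2$, giving $\pi'\cong\pi\otimes\varepsilon_1$.
\end{enumerate}
This corrected version covers the paper's standing hypothesis (H1).
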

\section{Tamely ramified case}
Throughout this section, let $K/F$ be a tamely ramified extension of local fields of degree $m,$ ramification index $e$ and residue field degree $f.$ For an additive character $\psi$ of the field $F,$ we define the conductor of $\psi,$$n(\psi),$ to be the smallest integer such that $\psi$ is trivial on $\mathfrak{p}^{n(\psi)}.$We now take $\psi$ to have conductor $0.$ We begin with a basic lemma on tamely ramified extensions.
\begin{lemma}
 
       $n(\psi \circ Tr_{K/F})=(e-1).$

\end{lemma}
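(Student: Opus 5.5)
The plan is to identify $n(\psi\circ \mathrm{Tr}_{K/F})$ with the valuation of the inverse different and then compute the different in the tame case. Throughout, $n(\psi)=0$, so $\psi$ is trivial on $\mathfrak{o}_F$ and nontrivial on $\mathfrak{p}_F^{-1}$.

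First I will unwind the definitions. The character $\psi_K:=\psi\circ \mathrm{Tr}_{K/F}$ is trivial on a fractional ideal $\mathfrak{P}^{j}$ exactly when $\mathrm{Tr}_{K/F}(\mathfrak{P}^{j})\subseteq \ker\psi$. Since $\mathrm{Tr}_{K/F}(\mathfrak{P}^j)$ is an $\mathfrak{o}_F$-submodule of $F$, it is a fractional ideal of $F$. A fractional ideal lies in $\ker\psi$ if and only if it is contained in $\mathfrak{o}_F$: otherwise it contains $\mathfrak{p}_F^{-1}$, on which $\psi$ is nontrivial. Hence $\psi_K$ is trivial on $\mathfrak{P}^j$ if and only if $\mathrm{Tr}_{K/F}(\mathfrak{P}^j)\subseteq\mathfrak{o}_F$, that is, if and only if $\mathfrak{P}^j\subseteq \mathcal{D}_{K/F}^{-1}$. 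Writing $\mathcal{D}_{K/F}=\mathfrak{P}^{\delta}$, this says $j\ge -\delta$. So the smallest such $j$ is $-\delta$, and with the sign convention in the paper (conductor measured as an exponent so that $\psi$ of conductor $0$ is trivial on $\mathfrak{o}_F$), the conductor of $\psi_K$ is determined by $\delta$. I will take care that the final normalization matches the paper's statement $n(\psi_K)=e-1$, that is, that the paper's $n$ records the exponent of the different $\delta$. Under the literal definition "smallest integer with $\psi$ trivial on $\mathfrak{p}^{n}$", the quantity would be $-\delta$. I will therefore note explicitly that the lemma is to be read with the standard sign convention, as in Tate's $n(\psi)$.

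Second, I will show that $\delta=e-1$ for tame $K/F$. I will factor $K/F$ as $K/K_0/F$, where $K_0$ is the maximal unramified subextension. The different is multiplicative in towers, $\mathcal{D}_{K/F}=\mathcal{D}_{K/K_0}\mathcal{D}_{K_0/F}$, and $\mathcal{D}_{K_0/F}=\mathfrak{o}_{K_0}$. For the totally tamely ramified part $K/K_0$ of degree $e$ with $p\nmid e$, I will write $\mathfrak{o}_K=\mathfrak{o}_{K_0}[\varpi]$, where $\varpi$ is a uniformizer whose minimal polynomial $g$ is Eisenstein. Then $\mathcal{D}_{K/K_0}=(g'(\varpi))$. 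The summands of $g'(\varpi)=e\varpi^{e-1}+\sum_{i<e} i a_i\varpi^{i-1}$ have distinct valuations. The leading term has valuation $e-1$ because $e$ is a unit, and each other term has valuation at least $e+(i-1)\ge e$. Hence $\delta=e-1$.

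The main obstacle is not mathematical but one of conventions: reconciling the sign of $n(\cdot)$ with the stated value $e-1$. The Eisenstein computation itself is routine. An alternative route to $\delta=e-1$, which I would note but not need, is Hilbert's formula using the vanishing of the higher ramification groups in the tame case.
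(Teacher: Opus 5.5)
Your argument is correct and is essentially the standard one behind the paper's proof, which just cites Weil's \emph{Basic Number Theory}: the conductor of $\psi\circ\mathrm{Tr}_{K/F}$ is read off from the different $\mathcal{D}_{K/F}$, and in the tame case $\mathcal{D}_{K/F}=\mathfrak{P}^{e-1}$. Your sign caveat is also justified: under the paper's literal definition of $n(\cdot)$, which is the convention its formula (1) uses, the value is $1-e$, and that is exactly what the proof of Theorem~\ref{thm:tamewd} needs to reach $a(\sigma_K)=ea(\sigma)-n(e-1)$, so the lemma as stated should be read in Tate's convention.
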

\begin{proof}
    This is just \cite{MR427267} page 142.
\end{proof}
\begin{theorem}\label[theorem]{thm:tamewd}
    Let $\sigma$ be an irreducible Weil--Deligne representation of dimension $n.$ Assume that $\sigma_K$ is not unramified.(This assumption fails when $n=1$ and $\sigma$ is the character associated to $K/F$ or its unramified twist, by local class field theory.) Then $a(\sigma_K)=ea(\sigma)-(e-1)n.$
\end{theorem}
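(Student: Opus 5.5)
The plan is to compute $a(\sigma_K)$ from the ramification filtration, using Herbrand's function for the tamely ramified extension $K/F$. Both sides are insensitive to the unramified part of $K/F$: an unramified extension does not change the lower-numbering ramification groups and does not change conductors. So we first reduce to the case $f=1$, i.e.\ $K/F$ totally and tamely ramified of degree $e$, with $p\nmid e$. If $\sigma$ has a nontrivial monodromy $N$, then $\sigma=\mathrm{sp}(n)\otimes\tau$ for an irreducible $\tau$. The conductor is $a=\dim V-\dim V^{I,N=0}$ plus the Swan term, and both of these transform compatibly under restriction. We therefore concentrate on the case where $\sigma$ is a genuine irreducible Weil representation, and handle the monodromy case at the end by the same bookkeeping.

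For a Weil representation we write $a(\sigma)=\dim V-\dim V^{I_F}+\mathrm{sw}(\sigma)$. The Swan conductor is $\mathrm{sw}(\sigma)=\sum_j \lambda_j$, where the $\lambda_j$ are the upper-numbering breaks of $\sigma$ counted with multiplicity; equivalently, $\mathrm{sw}(\sigma)=\int_0^\infty \operatorname{codim} V^{G_F^{u}}\,du$. For $K/F$ tamely totally ramified we have $G_K^{v}=G_F^{v/e}$ for $v>0$, since the Herbrand function is $\varphi_{K/F}(x)=x/e$ for $x\ge 0$. A change of variables then gives $\mathrm{sw}(\sigma_K)=e\cdot \mathrm{sw}(\sigma)$.

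The tame parts need separate treatment. Since $\sigma$ is irreducible, $V^{I_F}$ is either $0$ or all of $V$. In the latter case $n=1$ and $\sigma$ is unramified, and then $\sigma_K$ is unramified, which is excluded. Thus $V^{I_F}=0$.

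If $\sigma$ is wild, i.e.\ $\mathrm{sw}(\sigma)>0$, then wild inertia $P_F=P_K$ acts without invariants. The invariants under $P$ form a $W_F$-stable subspace, hence are $0$ by irreducibility, so $V^{I_K}=0$. We then get
$$a(\sigma_K)=n+e\,\mathrm{sw}(\sigma)=n+e(a(\sigma)-n)=ea(\sigma)-(e-1)n.$$

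If $\sigma$ is tamely ramified with $V^{I_F}=0$, then $a(\sigma)=n$ and the formula asserts $a(\sigma_K)=n$, i.e.\ $V^{I_K}=0$. In this case $\sigma$ restricted to $I_F$ factors through the tame quotient, which is procyclic, and $I_K$ is the subgroup of index $e$. We must show that no nonzero vector is fixed by $I_K$. Suppose $V^{I_K}\neq 0$. Because $I_K$ is normal in $W_F$, this space is $W_F$-stable, so it equals $V$. Hence $\sigma|_{I_F}$ factors through $I_F/I_K\cong\mu_e$, and $\sigma_K$ is unramified, which is excluded. This is exactly the role of the hypothesis: the parenthetical remark in the statement covers the case $n=1$, $\sigma$ a character of order dividing $e$ on inertia. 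For $n\ge 2$, irreducible tame $\sigma$ can also become unramified; the stated hypothesis "$\sigma_K$ not unramified" covers this as well.

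The main obstacle I expect is the monodromy case together with the precise normalization of Herbrand's function. For $\sigma=\mathrm{sp}(n)\otimes\chi$ we have $a=n-1+\dots$ in the unramified case and $n\,a(\chi)$ in the ramified case, so we must check the formula separately. If $\chi_K$ is unramified, the formula would predict $a(\sigma_K)=e\cdot a(\sigma)-(e-1)n$, which may differ from the true value $n-1$. I would therefore interpret "irreducible" as irreducible Weil representations with $N=0$, and verify the Swan scaling $\mathrm{sw}(\sigma_K)=e\,\mathrm{sw}(\sigma)$ carefully via $\varphi_{K/F}$. This scaling is consistent with the lemma $n(\psi\circ \mathrm{Tr}_{K/F})=e-1$ through the $\varepsilon$-factor formula $\varepsilon(\sigma,s,\psi)=c\,q^{(n\,n(\psi)-a(\sigma))s}$ and inductivity in degree zero. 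That gives an alternative check: compare the exponents of $q$ in $\varepsilon(\mathrm{Ind}_{K}^{F}\sigma_K)$ with those of $\sigma\otimes\mathrm{Ind}_K^F 1$.
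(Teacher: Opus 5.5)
Your argument is correct, and it takes a genuinely different route from the paper.

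\emph{The paper's route.} It works on the $\varepsilon$-factor side. It imports the fact that the exponent $n\,n(\psi)-a(\sigma)$ is multiplied by $e$ under base change, citing Bushnell--Henniart (Theorem 1.7) when $n$ is a power of $p$ and Arthur--Clozel (Proposition 6.9) otherwise. It then substitutes $n(\psi\circ\mathrm{Tr}_{K/F})=e-1$. This fits the $\varepsilon$-factor methods the paper uses later for $p=2$. However, it leans on deep external results and does not show where the hypothesis on $\sigma_K$ enters.

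\emph{Your route.} You work on the Galois side:
\begin{itemize}
\item $a=\operatorname{codim}V^{I}+\mathrm{sw}$;
\item the Herbrand identity $G_K^{eu}=G_F^{u}$ for $u>0$, which gives $\mathrm{sw}(\sigma_K)=e\,\mathrm{sw}(\sigma)$;
\item irreducibility, which forces $V^{I_F}=V^{I_K}=0$.
\end{itemize}
This is elementary and does not need $K/F$ to be Galois. It also isolates the hypothesis as exactly what excludes $V^{I_K}=V$. You are right that the parenthetical remark does not list all exceptions once $e>2$: tame characters of order dividing $e$ on units, and some $n\ge 2$ tame induced representations, also become unramified.

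\emph{Points to tidy up.}
\begin{itemize}
\item Drop the monodromy discussion. If $N\neq 0$, then $\ker N$ is a nonzero proper subspace stable under $W_F$ and $N$. So an irreducible Weil--Deligne representation has $N=0$ automatically. $\mathrm{sp}(n)\otimes\tau$ is indecomposable, not irreducible. Your opening claim that this case ``transforms compatibly'' is false, as your own $\mathrm{St}\otimes\chi$ example shows.
\item When $K/F$ is not Galois, normality of $I_K$ in $W_F$ needs justification. It holds because $I_K$ contains $P_F$ and is the preimage of the unique subgroup of index $e$ in the procyclic group $I_F/P_F$.
\item Once you have that, $V^{I_K}$ is $W_F$-stable in every case, so the wild/tame split is unnecessary.
\item Skip the reduction to $f=1$. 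Restriction to the maximal unramified subextension can make $\sigma$ reducible. It is also not needed, since $\varphi_{K/F}(x)=x/e$ for $x\ge 0$ holds for any tame $K/F$ of ramification index $e$.
\end{itemize}
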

\begin{proof}
    This follows from Theorem 1.7 in \cite{MR1685898}, with $\pi_2$ taken to be the trivial  character, when $n$ is a power of the residue characteristic.   The integer $f(\pi,\psi)$ in their theorem is $nc(\psi)-a(\sigma)$ written in (1). Otherwise, the result follows from \cite{MR1007299},{Proposition 6.9}.
    \begin{eqnarray}
     nc(\psi_K)-a(\sigma_K)=e(n(\psi)-a(\sigma))\\
     n(e-1)-a(\sigma_K)=-ea(\sigma)\\
     a(\sigma_K)=ea(\sigma)-n(e-1)
    \end{eqnarray}
    
   \end{proof}
\begin{corollary}\label[corollary]{cor:unbase}
       Let $\sigma$ be a representation of $W_F$ of dimension $n.$ Let $K$ be an unramified extension of $F.$ Then $$a(\sigma_K)=a(\sigma).$$ 
\end{corollary}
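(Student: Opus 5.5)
The corollary is the special case $e=1$ of \cref{thm:tamewd}, extended from irreducible to arbitrary $\sigma$.

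First I will reduce to irreducible $\sigma$. Write $\sigma=\sigma_1\oplus\cdots\oplus\sigma_r$ with each $\sigma_i$ irreducible. Restriction to $W_K$ commutes with direct sums, so $\sigma_K=(\sigma_1)_K\oplus\cdots\oplus(\sigma_r)_K$. By additivity of the Artin conductor (the second listed property of $\varepsilon$-factors), $a(\sigma_K)=\sum_i a((\sigma_i)_K)$ and $a(\sigma)=\sum_i a(\sigma_i)$. It therefore suffices to prove $a((\sigma_i)_K)=a(\sigma_i)$ for each $i$.

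Now fix an irreducible $\sigma$ of dimension $n$. Since $K/F$ is unramified, it is tamely ramified with $e=1$. If $\sigma_K$ is not unramified, \cref{thm:tamewd} gives
\[
a(\sigma_K)=1\cdot a(\sigma)-0\cdot n=a(\sigma).
\]
Equivalently, since $\psi\circ\mathrm{Tr}_{K/F}$ again has conductor $0$ by the lemma with $e=1$, the $\varepsilon$-factor relation directly yields $a(\sigma_K)=a(\sigma)$.

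The remaining case, where $\sigma_K$ is unramified, is the only real obstacle, and I will handle it directly. An unramified $\sigma_K$ is trivial on the inertia group $I_K$. Because $K/F$ is unramified, $I_K=I_F$, so $\sigma$ is trivial on $I_F$ as well. An irreducible representation trivial on $I_F$ factors through $W_F/I_F\cong\mathbb{Z}$, so it is a one-dimensional unramified character. Hence $a(\sigma)=0=a(\sigma_K)$.

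The same inertia argument applies to the nilpotent part of a Weil--Deligne representation. Restricting $(\sigma,N)$ to $W_K$ keeps $N$ unchanged, and the conductor $a(\sigma)+\dim V^{I}-\dim(\ker N)^{I}$ depends only on the inertia group. Since $I_K=I_F$, this formula is unchanged under restriction, so the equality $a(\sigma_K)=a(\sigma)$ holds in the Weil--Deligne setting too.
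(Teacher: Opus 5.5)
Your argument is correct and is essentially the paper's: the corollary is stated directly after \cref{thm:tamewd} with no separate proof, i.e.\ as the specialization $e=1$, giving $a(\sigma_K)=1\cdot a(\sigma)-0\cdot n$. Your reduction to irreducible constituents via additivity and your treatment of the excluded case where $\sigma_K$ is unramified (using $I_K=I_F$) make explicit what the paper leaves implicit; the only tacit assumption is that $\sigma$ is semisimple, which is harmless because the Artin conductor depends only on $\sigma|_{I_F}$, hence only on the semisimplification.
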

   We now turn to the representations of $W_F$ corresponding to the irreducible admissible representations of $GL(2)$. More precisely we need to look at the case $\dim \sigma=2$ and $K/F$ is a ramified quadratic extension.
\begin{theorem}\label[theorem]{thm:tame}
Let $K/F$ be a tamely ramified quadratic extension, and let $\pi$ be an admissible representation of $G.$
\begin{enumerate}
    \item If $\dim \pi=1, $ and $\pi_K$ is not unramified, then
   $$a(\pi_K)=
   2a(\pi)-1.$$
    \item

    If $\pi$ is a principal series representation, we assume $w_\pi=1.$ Then $$a(\pi_K)=\begin{cases}
      0&a(\pi)=0 \text{ or } \pi_K \text{ is unramified,}\\
      2a(\pi)-2&\text{ otherwise.}
    \end{cases}$$
   \item If $\pi$ is a special representation, then $$a(\pi_K)=\begin{cases}
       1&\text{ if } a(\pi)=1,\\
       1&\text{ if } a(\pi)=2 \text{ and } \pi\text{ has trivial central character } ,\\
       
       2a(\pi)-2 &\text{ otherwise.}\\       
   \end{cases}$$ 
   \item If $\pi$ is supercuspidal then $$a(\pi_K)=2a(\pi)-2.$$
\end{enumerate}
\end{theorem}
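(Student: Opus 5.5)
The plan is to pass to Weil--Deligne representations via LLC and apply \cref{thm:tamewd} with $e=2$ to each irreducible constituent. For a character $\chi$ of $W_F$ this gives $a(\chi_K)=2a(\chi)-1$ whenever $\chi_K$ is ramified. If $\chi_K$ is unramified, then $a(\chi_K)=0$. When $\chi$ itself is unramified, $\chi_K=\chi\circ N_{K/F}$ is unramified, so the formula is only applied to ramified $\chi$ with $\chi_K$ ramified. Part (1) is exactly this statement for $n=1$, since $a(\chi\circ\det)=a(\chi)$ and base change of $\chi\circ\det$ is $\chi_{K/F}\circ\det$.

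For part (2), write $\pi=\pi(\chi_1,\chi_2)$, so that $\pi_K=\pi((\chi_1)_{K/F},(\chi_2)_{K/F})$ and $a(\pi_K)=a(\chi_{1,K})+a(\chi_{2,K})$. The hypothesis $w_\pi=1$ gives $\chi_2=\chi_1^{-1}$, so $a(\chi_1)=a(\chi_2)=a(\pi)/2$ and $\chi_{2,K}=\chi_{1,K}^{-1}$. Both base-changed characters are therefore simultaneously ramified or simultaneously unramified. In the ramified case we get
\[
a(\pi_K)=2\bigl(2a(\chi_1)-1\bigr)=2a(\pi)-2 .
\]
The unramified case, which includes $a(\pi)=0$, gives $0$. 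I will note that $a(\chi_{i,K})=0$ can occur with $\chi_i$ ramified, namely when $\chi_i$ is an unramified twist of $\omega_{K/F}$; this is the ``$\pi_K$ unramified'' clause of the statement.

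For part (3), take $\pi=\mathrm{St}\otimes\chi$, so that $\pi_K=\mathrm{St}_K\otimes\chi_K$. If $a(\pi)=1$, then $\chi$ is unramified, hence $\chi_K$ is unramified and $a(\pi_K)=1$. Otherwise $a(\pi)=2a(\chi)$ with $\chi$ ramified. Then
\[
a(\pi_K)=2a(\chi_K)=2\bigl(2a(\chi)-1\bigr)=2a(\pi)-2 ,
\]
unless $\chi_K$ is unramified, in which case $a(\pi_K)=1$. I then need to identify the exceptional case. In the tame setting, $\chi_K$ unramified forces $\chi$ to be an unramified twist of $\omega_{K/F}$, which has conductor $1$. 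So $a(\pi)=2$, and since the central character of $\pi$ is $\chi^2$, the twist can be normalized so that $\chi^2=1$. This last step is the one I expect to require care: I must check that ``trivial central character and $a(\pi)=2$'' is exactly the condition under which $\chi$ is a quadratic character with $\chi_K$ unramified. The point to verify is that for tame $K$ each ramified quadratic character of $F^\times$ becomes unramified on one specific $K$. As written, the statement appears to require $\chi=\omega_{K/F}$ (up to an unramified quadratic twist), not an arbitrary quadratic $\chi$. I would therefore either sharpen the hypothesis to say this, or verify directly in the tame case that $\chi\circ N_{K/F}$ is unramified precisely when $\chi|_{\mathfrak o^\times}$ is the unique quadratic character, which is $\omega_{K/F}|_{\mathfrak o^\times}$.

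For part (4), let $\sigma$ be the irreducible $2$-dimensional Weil representation attached to $\pi$. I first check that $\sigma_K$ is not unramified. If $\sigma_K$ stays irreducible, this follows from $a(\sigma_K)\ge 2$. If $\sigma_K$ becomes reducible, then $\pi=\pi(K,\varkappa)$ and $\sigma_K=\varkappa\oplus\varkappa^\sigma$. In that case $\varkappa$ is ramified, because $\varkappa\neq\varkappa\circ\sigma$ forces $a(\varkappa)\ge d(K/F)=1$, and so \cref{thm:tamewd} does not apply directly. Instead I compute
\[
a(\pi_K)=2a(\varkappa)=2\bigl(a(\pi)-1\bigr),
\]
using $a(\pi)=a(\varkappa)+1$, which again gives $2a(\pi)-2$. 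In the irreducible case \cref{thm:tamewd} with $n=2$ and $e=2$ gives $2a(\pi)-2$ directly.
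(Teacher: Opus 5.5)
Your route is the paper's: LLC, Theorem~\ref{thm:tamewd} with $e=2$ in dimensions $1$ and $2$, and additivity of conductors for $\pi(\chi,\chi^{-1})$ and $\mathrm{St}\otimes\chi$. Parts (1), (2) and (4) are fine. In (4) the detour for $\pi=\pi(K,\varkappa)$ is unnecessary, because Theorem~\ref{thm:tamewd} only requires $\sigma$ irreducible and $\sigma_K$ not unramified; your direct count $2a(\varkappa)=2(a(\pi)-1)$ agrees anyway.

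The gap is in part (3), and you have placed the difficulty in the wrong direction.
\begin{itemize}
\item The step ``the twist can be normalized so that $\chi^2=1$'' is not legitimate. If $\chi_K$ is unramified then $\chi=\omega_{K/F}\mu$ with $\mu$ unramified, but $\mu$ is fixed by $\pi$. All you obtain is that the central character $\chi^2=\mu^2$ is unramified.
\item Your worry about ``an arbitrary quadratic $\chi$'' is unfounded. For $p$ odd the ramified quadratic characters of $F^\times$ are exactly $\omega_{K/F}$ and $\omega_{K/F}\eta$, with $\eta$ the unramified quadratic character. Each agrees with $\omega_{K/F}$ on $\mathfrak{o}_F^\times$, hence is trivial on $N_{K/F}(\mathfrak{o}_K^\times)=\ker(\omega_{K/F}|_{\mathfrak{o}_F^\times})$. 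So $a(\pi)=2$ with trivial central character does give $a(\pi_K)=1$.
\item What fails is the converse, and no argument can repair it. Take $\chi=\omega_{K/F}\mu$ with $\mu$ unramified and $\mu^2\neq 1$. Then $a(\pi)=2$ and the central character is nontrivial, yet $\chi_K=\mu\circ N_{K/F}$ is unramified and $a(\pi_K)=1$. The ``otherwise'' line of (3) predicts $2$.
\end{itemize}

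The correct middle condition is ``$a(\pi)=2$ and $\pi$ has unramified central character'', equivalently $\chi=\omega_{K/F}$ on $\mathfrak{o}_F^\times$. Your second option proves exactly this corrected statement: check directly that $\chi\circ N_{K/F}$ is unramified iff $\chi|_{\mathfrak{o}_F^\times}=\omega_{K/F}|_{\mathfrak{o}_F^\times}$. That is also the computation the paper makes. The paper's closing sentence, that $\chi=w_{K/F}$ on $\mathfrak{o}_F^\times$ ``means that $\pi$ has trivial central character'', contains the same slip.
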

The results of this section are summarized in \cref{tab:tame}. 
\begin{proof}
    For the first and last case use \ref{thm:tamewd} for Weil--Deligne representations of dimension $1$ and $2.$ For the other two cases this follows from a case-by-case calculation using $(1)$ above. When $\pi \cong \text{St}\otimes\chi,$   $a(\pi)=2$ implies $ a(\chi)=1.$ Since $d(K/F)=1$, we have $U^1_F\subset N_{K/F}(F^\times).$ Thus $$a(\chi \circ N_{K/F})=\begin{cases}
        0& N_{K/F}(\mathfrak{o}^\times_K)\subset U^1_F,\\
        1 &\text{otherwise.}
    \end{cases}$$
    The first case happens only when $\chi=w_{K/F}$ on $\mathfrak{o}^\times_F.$ Thus $$a(\pi_K)=\begin{cases}
        1&\chi=w_{K/F}\text{ on } \mathfrak{o}^\times_F,\\
        2&\text{otherwise.} 
    \end{cases} $$
 The case $a(\pi)=2$ and $\chi=w_{K/F}\text{ on } \mathfrak{o}^\times_F$ means that $\pi$ has trivial central character.
\end{proof}

\begin{table}
    \centering
    \begin{tabular}{|c|c|c|}
    \hline
        $\pi$ &Notes  &$a(\pi_K)$ \\\hline
        Principal series representation&$a(\pi)= 0$     & $a(\pi)$ \\ \hline
         Principal series representation&$a(\pi)= 2, \pi_K \text{ is not unramified}$  & $a(\pi)$ \\ \hline
         $\pi(\chi_1,\chi_2)$&$a(\pi)> 2$  & $2(a(\pi)-1)$ \\ \hline
         Special representation&$a(\pi)=1 \text{ or } \pi \text{ has trivial central character}$&1\\ \hline
         Special representation&$\pi \text{ is not of the form above}$&$2(a(\pi)-1)$\\ \hline
         \text{ supercuspidal}&  &$2(a(\pi)-1)$ \\ \hline
    \end{tabular}
    \caption{Summary of results when $F$ is a $p$-adic field for odd $p$, $K/F$ is a ramified quadratic extension (When $\pi$ is principal series, we assume $\pi$ has trivial central character)}
    \label{tab:tame}
\end{table}
\section{Imprimitive representations of \texorpdfstring{$\mathbb{Q}_{2}$}{Q2}}
Let us now shift our focus to the case where $F$ is dyadic. As remarked in the introduction, this case is more involved than the previous. Let $F$ be a dyadic field, $K$ a quadratic extension of $F.$ Let $\varpi_{K}$ be a uniformizer of $K$ and let $v_K$ be a valuation on $K$ such that $v_K(\varpi_K)=1.$  Let $G=\operatorname{Gal}(K/F)$ be the Galois group of this extension, generated by $\tau.$ It is easy to see that the discriminant $d(K/F)=v_{K}(\tau(\varpi_{K})-\varpi_{K}),$ which we write as $d$ if it is clear from the context which extension we are dealing with. It can also be seen by class field theory that $d(K/F)$ is the largest integer such that $U^{d(K/F)}_F$ is contained in the image of the norm map, $N_{K/F}.$ Also we can see that $$d(K/F)=\begin{cases}
    0&K/F \text{ is unramified,}\\
    1&K/F \text{ is ramified and $p$ is odd,}\\
    >1&K/F \text{ is ramified and $p=$}2.\\
\end{cases}$$ 

\begin{proposition}\label[proposition]{prop:norm}
\cite{MR554237},  Page 85 Corollary 3
    
    \begin{enumerate}
    \item $N_{K/F}(\mathfrak{o}_{K}^{\times})\subset \mathfrak{o}^{\times}_{F}$
        \item  $N_{K/F}(U^{\psi(n)}_{K})=U^{n}_{F}$ for $n\geq d(K/F)$.
   \item $N_{K/F}(U^{d+2i}_{K})=N_{K/F}(U^{d+1+2i}_{K})= U^{d+i}_{F}, \forall i\geq0$
\end{enumerate}
where $\psi(n)=\begin{cases}
    n& n\leq d-1\\
    2n-(d-1) &n\geq d-1\\
\end{cases}$
\end{proposition}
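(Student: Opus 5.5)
The plan is to reproduce the standard argument from local class field theory (Serre, \emph{Local Fields}, Ch.~V) specialised to a quadratic extension. Since the formula for $\psi$ is adapted to the ramified case, I assume throughout that $K/F$ is ramified; in the unramified case $d=0$ and (2) reduces to the classical fact that the norm maps the unit filtration of $K$ onto that of $F$. There are three things to record.

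\emph{Ramification data.} Since $G=\{1,\tau\}$ and $d=v_K(\tau(\varpi_K)-\varpi_K)$, the lower ramification groups are $G_i=G$ for $i\le d-1$ and $G_i=1$ for $i\ge d$. So the unique break is $t=d-1$. The Herbrand function is then $\psi(n)=n$ for $n\le t$ and $\psi(n)=t+2(n-t)=2n-(d-1)$ for $n\ge t$, which is exactly the function in the statement.

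\emph{Part (1).} This is immediate from $v_F(N_{K/F}x)=f(K/F)\,v_K(x)$: a unit has valuation $0$, so its norm is a unit.

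\emph{Key computation.} Write $x=1+y$ with $v_K(y)=m\ge1$. Then
\[N_{K/F}(x)=1+\operatorname{Tr}_{K/F}(y)+N_{K/F}(y).\]
Here $v_F(N_{K/F}(y))=m$. Because the different is $\mathfrak p_K^{d}$, we have $\operatorname{Tr}_{K/F}(\mathfrak p_K^{m})=\mathfrak p_F^{\lfloor (m+d)/2\rfloor}$. Comparing these two valuations shows which term dominates:
\begin{itemize}
\item For $m<d-1$, the norm term $N_{K/F}(y)$ dominates.
\item For $m>d-1$, the trace term dominates.
\item At the break $m=d-1$, the two terms have the same valuation.
\end{itemize}

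\emph{Part (2).} For $n\ge d$ we have $\psi(n)=2n-d+1>d-1$, and $\lfloor(\psi(n)+d)/2\rfloor=n$. So $N_{K/F}$ maps $U_K^{\psi(n)}$ into $U_F^{n}$. On graded pieces, the induced map
\[U_K^{\psi(n)}/U_K^{\psi(n)+1}\longrightarrow U_F^{n}/U_F^{n+1}\]
is $y\mapsto \operatorname{Tr}_{K/F}(y)\bmod \mathfrak p_F^{n+1}$. This map is surjective, because the trace maps $\mathfrak p_K^{\psi(n)}$ onto $\mathfrak p_F^n$ and the residue fields agree. Surjectivity on every graded piece, combined with completeness of $U_F^n$ (successive approximation), gives $N_{K/F}(U_K^{\psi(n)})=U_F^n$.

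\emph{Part (3).} Take $n=d+i$ in (2). Then $\psi(d+i)=d+1+2i$, so $N_{K/F}(U_K^{d+1+2i})=U_F^{d+i}$. For the larger group $U_K^{d+2i}$, with $m=d+2i$ the trace term has valuation $\lfloor(2d+2i)/2\rfloor=d+i$ and the norm term has valuation $d+2i\ge d+i$. Hence $N_{K/F}(U_K^{d+2i})\subseteq U_F^{d+i}$. The reverse inclusion holds because $U_K^{d+2i}\supseteq U_K^{d+1+2i}$.

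\emph{Main obstacle.} The one step needing care is the trace formula $\operatorname{Tr}(\mathfrak p_K^m)=\mathfrak p_F^{\lfloor(m+d)/2\rfloor}$ and its exact surjectivity onto the graded quotient. I would derive it from the identification of the inverse different, $\mathfrak p_K^{-d}$, as the dual of $\mathfrak o_K$ under the trace pairing.
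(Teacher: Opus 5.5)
Your argument is correct and is essentially the proof behind the paper's citation (Serre, \emph{Local Fields}, Ch.~V, \S3), which the paper invokes without proof. It uses the same ingredients: the expansion $N_{K/F}(1+y)=1+\mathrm{Tr}_{K/F}(y)+N_{K/F}(y)$, the trace bound coming from the different, identification of the graded map with the trace above the break $t=d-1$, and successive approximation. For a quadratic extension this expansion is an exact identity rather than Serre's congruence, which makes your version cleaner.

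One correction to your aside on the unramified case. With the stated $\psi$ and $d=0$ one gets $\psi(n)=2n+1$, so (2) and (3) are actually false for unramified $K/F$; they do not reduce to the classical fact $N_{K/F}(U_K^n)=U_F^n$. Your restriction to ramified $K$ is therefore necessary, not just convenient, and it is the only case the paper uses.
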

Let us also write  a preliminary lemma on how to deduce the discriminant and the degree of residue field extensions on towers.
    \begin{lemma}\label[lemma]{lem:disc}
        Let $F$ be a local field and let $L_{1}, L_{2}$ be two quadratic extensions of $F.$ Then $L_{1}L_{2}$ is a $\mathbb{Z}/2\mathbb{Z}\times \mathbb{Z}/2\mathbb{Z}$ extension of $F.$ Then we have the following lattice of extensions.   \[
\begin{tikzpicture}[node distance = 2cm, auto,scale=0.7, transform shape]
\node (a) {F};
\node (b) [above of=a, left of=a] {$L_{1}$};
\node (c) [above of=a] {$L_{2}$};
\node (d) [above of=a, right of=a] {$L_{3}$};
\node (e) [above of=a, node distance = 4cm] {$L$};
\draw[-] (a) to (b);
\draw[-] (a) to (d);
\draw[-] (b) to (e);
\draw[-] (d) to (e);
\draw[-] (a) to (c);
\draw[-] (c) to (e);
\end{tikzpicture}
\] 
Then for $i=1,2,3$ \begin{enumerate}
    \item $f(L/L_{i})f(L_{i}/F)=f(L/F),$
    \item $d(L/F)=\sum_{i=1}^{3} d(L_{i}/F)$ and
    \item $d(L/F)=2d(L_{i}/F)+f(L_{i}/F)d(L/L_{i}).$
    \item $d(L_3/F)\leq \max \{d(L_1/F),d(L_2/F)\},$ with equality if $d(L_1/F)\neq d(L_2/F).$ 
    \item In $(4)$ above, the inequality is strict if $F$ is a totally  ramified extension of $\mathbb{Q}_2.$
\end{enumerate}
    \end{lemma}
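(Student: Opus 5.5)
The plan is to treat items (1)--(5) separately, using three standard facts: multiplicativity in towers, the conductor--discriminant formula, and transitivity of discriminants. Throughout, $d(M/N)$ denotes the $N$-adic valuation of the discriminant ideal of $M/N$. For a quadratic $M/N$ this agrees with the definition $v_M(\tau(\varpi_M)-\varpi_M)$ given at the start of this section. Indeed, $\tau(\varpi_M)-\varpi_M$ generates the different when $M/N$ is ramified, and the norm of the different is the discriminant with $f(M/N)=1$. In the unramified case both quantities are $0$.

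Item (1) is the multiplicativity of residue degrees in the tower $F\subset L_i\subset L$.

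For (2), I will use local class field theory to identify $\operatorname{Gal}(L/F)\cong \mathbb{Z}/2\times\mathbb{Z}/2$ with a group of characters of $F^\times$. Its characters are $1,\chi_1,\chi_2,\chi_3=\chi_1\chi_2$, where $\chi_i$ is the quadratic character cutting out $L_i$. The conductor--discriminant formula gives
\[
d(L/F)=a(1)+a(\chi_1)+a(\chi_2)+a(\chi_3)
\qquad\text{and}\qquad
d(L_i/F)=a(\chi_i),
\]
and summing yields (2).

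For (3), I will use transitivity of discriminants:
\[
\mathfrak d_{L/F}=N_{L_i/F}(\mathfrak d_{L/L_i})\,\mathfrak d_{L_i/F}^{[L:L_i]}.
\]
Taking $F$-valuations, $v_F(N_{L_i/F}(\mathfrak P^{k}))=f(L_i/F)\,k$ and $[L:L_i]=2$, which gives $d(L/F)=2d(L_i/F)+f(L_i/F)\,d(L/L_i)$.

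For (4), since $\chi_3=\chi_1\chi_2$, the character $\chi_3$ is trivial on $U_F^{\max(a(\chi_1),a(\chi_2))}$, so $a(\chi_3)\le \max\{a(\chi_1),a(\chi_2)\}$. Now suppose $a(\chi_1)<a(\chi_2)$. Then $\chi_1$ is trivial on $U_F^{a(\chi_2)-1}$ while $\chi_2$ is not, so $\chi_3$ is nontrivial there and equality holds. Combined with $d(L_i/F)=a(\chi_i)$, this is (4).

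The main point is (5), the strictness when $d(L_1/F)=d(L_2/F)=a$ and $F/\mathbb{Q}_2$ is totally ramified.
\begin{itemize}
\item If $a=0$, both characters would be the unique unramified quadratic character. They would then coincide, contradicting $L_1\ne L_2$, so this case cannot occur.
\item If $a=1$, the character $\chi_i$ would be nontrivial on $U^0_F/U^1_F\cong k_F^\times=\mathbb{F}_2^\times=1$, which is impossible.
\item If $a\ge 2$, then $U_F^{a-1}/U_F^{a}\cong k_F=\mathbb{F}_2$ has order $2$. Both $\chi_1$ and $\chi_2$ factor through this quotient and are nontrivial on it, so they agree on $U_F^{a-1}$. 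Hence $\chi_1\chi_2$ is trivial on $U_F^{a-1}$, giving $a(\chi_3)\le a-1<a$.
\end{itemize}

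The only delicate step is keeping the normalizations of $d$ consistent: the $L$-valued versus $F$-valued discriminant, and the conductor versus the discriminant for quadratic characters. I would check this on one example, such as $\mathbb{Q}_2(\sqrt{-1})$, before writing the argument out.
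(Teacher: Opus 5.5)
Your proposal is correct and follows the paper's proof. Items (1) and (3) come from the tower formulas, and (2) comes from the conductor--discriminant formula. Items (4) and (5) come from the quadratic characters $\chi_1,\chi_2,\chi_3=\chi_1\chi_2$ together with the order-two quotient $U_F^{a-1}/U_F^{a}$. Your write-up is slightly more careful than the paper's: you correctly say that $\chi_1,\chi_2$ are \emph{nontrivial} on $U_F^{a-1}$ where the paper writes ``trivial,'' and you dispose of the cases $a=0,1$. Like the paper, you read (5) as applying only when $d(L_1/F)=d(L_2/F)$, which is the right reading.
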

    \begin{proof}
        $(1)$ and $(3)$ just follow from the behavior of residue class degree and discriminant in towers. $(2)$ follows from the conductor-discriminant formula or \emph{F\"uhrerdiskriminantenproduktformel} by Hasse. $(4)$ is clear when one of the extensions is unramified. Now let $\chi_i$ be the characters of $F^\times$ corresponding to $L_i$. Clearly $\chi_3=\chi_1\chi_2$. The case where $d(L_1/F)\neq d(L_2/F),$ implies that $\chi_i$ is trivial on $U^{d(L_i/F)}_F$ for $i=1,2.$ This means that $\chi_3$ is trivial on the largest unit group among them, and $(4)$ follows. Now for $(5)$ if $d_1=d_2=d,$ we have both $\chi_1$ and $\chi_2$ are trivial on $U^d_F,$ and both of them are trivial on $U^{d-1}_F.$ Since $F$ is totally ramified over $\mathbb{Q}_2,$ we get $U^{d}_F$ is an index two subgroup in $U^{d-1}_F.$ Thus we get $\chi_1(u)=\chi_2(u)=-1,$ when $u$ is in $U^{d-1}_F$ but not $U^{d}_F.$ Thus $\chi_1\chi_2$ is trivial on $U^{d-1}_{F},$ hence $d(L_3/F)<\max \{d(L_1/F),d(L_2/F)\}.$
        
        All of these statements can be found in \cite{MR1697859} although $d(L/F)$ is the $p-$adic valuation of the discriminant in  \cite{MR1697859}.
    \end{proof}
While the above lemma is obvious for odd primes since there is only one $\mathbb{Z}_{2}\times \mathbb{Z}_{2}$ extension up to isomorphism we record it for use when we have to deal with dyadic extensions. From now on let us assume that $F$ is a totally ramified extension of $\mathbb{Q}_2.$ Applying Proposition 1.5, page 72 in \cite{MR1915966}to $F,$ we get the following proposition.
\begin{proposition}

\medskip

\begin{tikzpicture}[baseline=(current bounding box.center)]
\node (OK) at (0,1) {$\mathfrak{o}_K^{\times}$};
\node (KF) at (3,1) {$\overline{K}^{\times}$};
\node (OF) at (0,0) {$\mathfrak{o}_F^{\times}$};
\node (FF) at (3,0) {$\overline{F}^{\times}$};

\draw[->] (OK) -- node[above] {$\lambda_0$} (KF);
\draw[->] (OK) -- node[left] {$N_{K/F}$} (OF);
\draw[->] (KF) -- node[right] {$\mathrm{id}$} (FF);
\draw[->] (OF) -- node[below] {$\lambda_0$} (FF);
\end{tikzpicture}

\medskip

\begin{tikzpicture}[baseline=(current bounding box.center)]
\node (UK) at (0,1) {$U_K^{i}$};
\node (KF) at (3,1) {$\overline{K}$};
\node (UF) at (0,0) {$U_F^{i}$};
\node (FF) at (3,0) {$\overline{F}$};

\draw[->] (UK) -- node[above] {$\lambda_i$} (KF);
\draw[->] (UK) -- node[left] {$N_{K/F}$} (UF);
\draw[->] (KF) -- node[right] {$\mathrm{id}$} (FF);
\draw[->] (UF) -- node[below] {$\lambda_i$} (FF);
\end{tikzpicture}
\quad \textit{for } $1 \le i < d-1$.

\medskip

\begin{tikzpicture}[baseline=(current bounding box.center)]
\node (UK) at (0,1) {$U_K^{d-1}$};
\node (KF) at (3,1) {$\overline{K}$};
\node (UF) at (0,0) {$U_F^{d-1}$};
\node (FF) at (3,0) {$\overline{F}$};

\draw[->] (UK) -- node[above] {$\lambda_{d-1}$} (KF);
\draw[->] (UK) -- node[left] {$N_{K/F}$} (UF);
\draw[->] (KF) -- node[right] {$0$} (FF);
\draw[->] (UF) -- node[below] {$\lambda_{d-1}$} (FF);
\end{tikzpicture}

\end{proposition}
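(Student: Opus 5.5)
The plan is to compute the norm directly on elements $1+\varpi_K^i x$ and read off the induced maps on graded pieces. I fix the normalizations first: $K/F$ is a ramified quadratic extension, $F/\mathbb{Q}_2$ is totally ramified, so $\overline{K}=\overline{F}=\mathbb{F}_2$. I choose $\varpi_F:=N_{K/F}(\varpi_K)$, which is a uniformizer of $F$ since $K/F$ is totally ramified. Then $\lambda_i$ on $U^i_F$ is $1+\varpi_F^i y\mapsto \bar y$, and on $U^i_K$ it is $1+\varpi_K^i x\mapsto \bar x$; for $i=0$, $\lambda_0$ is reduction mod the maximal ideal.

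\emph{The case $i=0$.} For $u\in\mathfrak{o}_K^\times$ the conjugate $\tau(u)$ has the same residue as $u$, since $\tau$ acts trivially on $\overline{K}=\overline{F}$. Hence $N_{K/F}(u)=u\tau(u)\equiv \bar u^2 \pmod{\mathfrak{p}_K}$, and this lies in $\mathfrak{o}_F^\times$ (item (1) of \cref{prop:norm}). In $\mathbb{F}_2$ we have $\bar u^2=\bar u$, so the first diagram commutes with the identity on the right.

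\emph{The case $1\le i<d-1$.} For $u=1+\varpi_K^i x$ with $x\in\mathfrak{o}_K$,
$$N_{K/F}(u)=1+\operatorname{Tr}_{K/F}(\varpi_K^i x)+\varpi_F^{\,i}N_{K/F}(x).$$
The different of $K/F$ is $\mathfrak{p}_K^{d}$. Consequently $\operatorname{Tr}_{K/F}(\mathfrak{p}_K^{m})\subset \mathfrak{p}_F^{\lfloor (m+d)/2\rfloor}$. With $m=i$ and $i<d-1$ we get $\lfloor (i+d)/2\rfloor\ge i+1$, so the trace term lies in $\mathfrak{p}_F^{i+1}$. The last term contributes $\overline{N(x)}=\bar x^2=\bar x$. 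Thus $N(U^i_K)\subset U^i_F$, and on graded pieces $\lambda_i\circ N=\mathrm{id}\circ\lambda_i$. This is the second diagram.

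\emph{The case $i=d-1$.} The same expansion shows $N(U^{d-1}_K)\subset U^{d-1}_F$. The trace term now has valuation at least $\lfloor(2d-1)/2\rfloor=d-1$, so it can contribute at the same level as the norm term. The induced map $\bar x\mapsto \overline{\varpi_F^{-(d-1)}N(1+\varpi_K^{d-1}x)-\varpi_F^{-(d-1)}}$ is a group homomorphism $\mathbb{F}_2\to\mathbb{F}_2$ of the form $\bar x\mapsto a\bar x^2+b\bar x=(a+b)\bar x$, with $a=1$. Rather than computing $b$ (the residue of $\varpi_F^{-(d-1)}\operatorname{Tr}(\varpi_K^{d-1}\cdot)$, which is the messy step), I show this map cannot be the identity.

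Suppose it were surjective. The maps for $0\le i<d-1$ are already surjective, so the norm would be surjective on every graded quotient $U^i_F/U^{i+1}_F$ for $0\le i\le d-1$. By the paper, $U^d_F\subset N_{K/F}(K^\times)$. A successive approximation argument would then give $\mathfrak{o}_F^\times\subset N_{K/F}(\mathfrak{o}_K^\times)\,U^{d}_F\subset N_{K/F}(K^\times)$. But then the quadratic character $w_{K/F}$ would be unramified, contradicting that $K/F$ is ramified. (Equivalently, $w_{K/F}$ has conductor exactly $d$, so it is nontrivial on $U^{d-1}_F$, while the norm group contains $U^{d}_F$ but not $U^{d-1}_F$.) Hence $a+b=0$, and the induced map is $0$. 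This gives the third diagram.

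I expect the main obstacle to be the $i=d-1$ step. There one must either pin down the residue of the trace coefficient explicitly or, as above, replace that computation by the class-field-theoretic characterization of $d$ recalled before \cref{prop:norm}. The only technical input needed elsewhere is the trace valuation bound coming from the different.
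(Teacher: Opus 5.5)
Your proof is correct, but it takes a different route from the paper. The paper gives no computation at all. It simply specializes Proposition 1.5 of \cite{MR1915966}, which describes the norm on the graded unit groups of a cyclic degree-$p$ extension with ramification break $s$: the map is $\xi\mapsto\xi^{p}$ below $s$, and at $s$ it is an additive map $\xi\mapsto\xi^{p}-\eta^{p-1}\xi$ whose kernel has order $p$. Taking $p=2$, $s=d-1$ and $\overline{F}=\mathbb{F}_2$ gives the identity below the break and the zero map at the break.

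Your treatment of $0\le i<d-1$ uses $N_{K/F}(1+y)=1+\mathrm{Tr}_{K/F}(y)+N_{K/F}(y)$ together with $\mathrm{Tr}_{K/F}(\mathfrak{p}_K^{m})=\mathfrak{p}_F^{\lfloor (m+d)/2\rfloor}$, which comes from the different $\mathfrak{p}_K^{d}$. This is essentially the argument behind the cited result, made self-contained.

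At $i=d-1$ you instead use local class field theory. This is valid and not circular, since the inclusion $U^{d}_F\subset N_{K/F}(K^\times)$ (\cref{prop:norm}(2)) only concerns levels above the break. However, it is heavier than needed. It also depends on $\overline{K}=\mathbb{F}_2$: "not surjective" forces the zero map only when the target is $\mathbb{F}_2$.

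There is a one-line elementary replacement, which is how the kernel at the break is identified in the standard treatment. Put $u=\tau(\varpi_K)/\varpi_K$. Then $u-1=(\tau(\varpi_K)-\varpi_K)/\varpi_K$ has valuation exactly $d-1$, so $u$ generates $U^{d-1}_K/U^{d}_K$. On the other hand $N_{K/F}(u)=1$, so the graded map kills the generator and is $0$, with no class field theory and no computation of the trace coefficient.

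The citation buys generality: it covers arbitrary residue fields and gives the exact form of the map at the break. Your argument buys a transparent, nearly citation-free proof of exactly the case $\overline{F}=\mathbb{F}_2$ that the paper actually uses.
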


This means we have
\begin{enumerate}
    \item $N_{K/F}(\mathfrak{o}^{\times}_{K}) \subset \mathfrak{o}^{\times}_{F}.$
    \item $N_{K/F}(U^{i}_{K}) \subset U^{i}_{F}$ and $N_{K/F}(U^{i}_{K}) \not\subset U^{i+1}_{F}$ for $1\leq i<d-1.$
    \item $N_{K/F}(U^{d-1}_{K}) \subset U^{d}_{F}.$
\end{enumerate}
\begin{corollary}\label[corollary]{Characters}
    Let $K/F$ be a ramified quadratic extension of local fields, and let $\varkappa$ be a character of $F^{\times}.$ Then
    $$a({\varkappa_K})=\begin{cases}
        a(\varkappa) & 0\leq a(\varkappa)< d,\\
        <d & a(\varkappa)=d,\\
        2a(\varkappa)-d          & \text{otherwise}.
    \end{cases}$$
\end{corollary}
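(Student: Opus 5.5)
The plan is to compute $a(\varkappa_K)$ directly from the definition: it is the smallest $j$ such that $\varkappa_K=\varkappa\circ N_{K/F}$ is trivial on $U^j_K$. Equivalently, it is the smallest $j$ with $N_{K/F}(U^j_K)\subset\ker\varkappa$. So everything reduces to locating the images $N_{K/F}(U^j_K)$ inside the filtration $\{U^i_F\}$ and comparing them with the jump $a=a(\varkappa)$. We know $\varkappa$ is trivial on $U^a_F$ but not on $U^{a-1}_F$. I would split into the three ranges $a<d$, $a=d$ and $a>d$. The inputs are the three consequences of the diagram proposition listed just above, together with \cref{prop:norm}(3).

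\emph{Case $a>d$.} Write $a=d+i$ with $i\ge 1$. By \cref{prop:norm}(3), $N_{K/F}(U^{d+2i}_K)=U^{d+i}_F=U^a_F$, so $\varkappa_K$ is trivial on $U^{d+2i}_K$. By the same proposition, $N_{K/F}(U^{d+2i-1}_K)=N_{K/F}(U^{d+2(i-1)}_K)=U^{d+i-1}_F=U^{a-1}_F$, and $\varkappa$ is nontrivial there. Hence $a(\varkappa_K)=d+2i=2a-d$.

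\emph{Case $a=d$.} Fact (3) after the proposition gives $N_{K/F}(U^{d-1}_K)\subset U^d_F=U^a_F$. Therefore $\varkappa_K$ is trivial on $U^{d-1}_K$, and $a(\varkappa_K)\le d-1<d$. The statement only claims this bound, so nothing more is needed.

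\emph{Case $a<d$.} Fact (2) gives $N_{K/F}(U^a_K)\subset U^a_F$ (for $a\le d-1$ one may also use fact (3)), so $a(\varkappa_K)\le a$. For the reverse inequality I need $\varkappa\circ N_{K/F}$ to be nontrivial on $U^{a-1}_K$. Since $a-1<d-1$, the diagram says that on graded pieces $\lambda_{a-1}\circ N_{K/F}=\lambda_{a-1}$. Thus $N_{K/F}(U^{a-1}_K)\cdot U^a_F=U^{a-1}_F$, because $N_{K/F}(U^{a-1}_K)$ surjects onto $U^{a-1}_F/U^a_F$. Now $\varkappa$ is trivial on $U^a_F$ and nontrivial on $U^{a-1}_F$. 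So it must be nontrivial on $N_{K/F}(U^{a-1}_K)$, which gives $a(\varkappa_K)=a$.

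The small values $a=0,1$ need a separate word, since $U^0=\mathfrak o^\times$ and the residue field may be $\mathbb F_2$. There I would use the first diagram ($\lambda_0$ commuting with the identity on residue fields) and fact (1) in the same way, and check the convention.

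The main obstacle is this surjectivity-on-graded-pieces step in the case $a<d$. It is the only place where the precise content of the diagram proposition is used, rather than mere containments. One has to read off correctly that ``$\lambda_i\circ N=\mathrm{id}\circ\lambda_i$'' yields the equality $N_{K/F}(U^i_K)U^{i+1}_F=U^i_F$ for $1\le i<d-1$. It is also where the hypothesis that $F/\mathbb Q_2$ is totally ramified enters: the graded pieces are then one-dimensional over $\mathbb F_2$, so ``not contained in $U^{i+1}_F$'' already forces surjectivity.
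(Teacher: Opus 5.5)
Your proposal is correct and follows essentially the paper's argument. The case $a(\varkappa)>d$ comes from \cref{prop:norm}(3). For $a(\varkappa)<d$ the paper likewise argues that $N_{K/F}(U^{a-1}_K)$ and $U^{a}_F$ together exhaust $U^{a-1}_F$; your graded-piece version, using that the residue field is $\mathbb{F}_2$, is the precise form of this step. The one difference is the case $a(\varkappa)=d$: the paper defers to \cref{Prop:char}, which is stated only for $F=\mathbb{Q}_2$, whereas your direct use of $N_{K/F}(U^{d-1}_K)\subset U^{d}_F$ is self-contained and covers every $F$ totally ramified over $\mathbb{Q}_2$. Your loose end $a(\varkappa)=1$ is vacuous there, since $U^0_F=U^1_F$.
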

\begin{proof}
    The case $a(\varkappa)>d$ is clear. Now for $n=a(\varkappa)< d$ the proposition implies that $a(\varkappa_{K})\leq n.$ Now if $a(\varkappa_{K})<n$ this means $\varkappa$ is trivial on both $N_{K/F}(U^{n-1}_{K})$ and $U^{n}_F.$ However these two subgroups are distinct index two subgroups of $U^{n-1}_{F}$ unless $N_{K/F}(U_K^{n-1})=U_F^{n},$ which happens when $n=d(K/F).$ But we have $n<d(K/F),$ hence $\varkappa$ is trivial on $U^{n-1}_{F}$ contradicting $a(\varkappa)=n.$ Hence $a(\varkappa_{K})=n,$ when $a(\varkappa)<d.$ See \cref{Prop:char} for the case $a(\varkappa)=d$ when $F=\mathbb{Q}_2.$
\end{proof}
\begin{remark}
    The case of $a(\varkappa)>d$ is also true for general dyadic fields by Proposition 4.1 above.
\end{remark}

Now let us make a brief note on the arithmetic of quadratic fields over dyadic fields. Let $e$ be the ramification index of $K/F.$ Then the following are the quadratic extensions of $F,$ with the number of them. See Lemma 4.3 in \cite{MR476703}.
\begin{itemize}
    \item A unique unramified extension.
    \item $2^{k}$ quadratic extensions each of discriminant $2k,$ where $1 \leq k\leq e.$
    \item $2^{e+1}$ quadratic extensions of discriminant $2e+1.$
\end{itemize}For $F=\mathbb{Q}_{2}$ this means that there are $7$ quadratic extensions of $F$ as follows:\begin{enumerate}
    \item $\mathbb{Q}_{2}(\sqrt{5}),$ the unramified extension.
    \item $\mathbb{Q}_{2}(\sqrt{3})$ and $\mathbb{Q}_{2}(\sqrt{-1}),$ with discriminant 2.
    \item $\mathbb{Q}_{2}(\sqrt{2})$ and $\mathbb{Q}_{2}(\sqrt{10}),$ with discriminant 3.
    \item $\mathbb{Q}_{2}(\sqrt{-2})$ and $\mathbb{Q}_{2}(\sqrt{-10}),$ with discriminant 3.
\end{enumerate}
The pair of fields in each entry differ by the unramified character of $F^{\times}.$ For any other pairings of quadratic fields with the discriminant $3,$ they differ by a character of conductor $2.$ 
From now on let us set $F=\mathbb{Q}_2.$
Let us explain the case when $a(\chi)=d(K/F), \text{ for } F=\mathbb{Q}_2.$ Thus $a(\chi)=2 \text{ or } 3.$
\begin{proposition}\label[proposition]{Prop:char}
Let $K$ be a ramified quadratic extension of $F$ with discriminant $d(K/F)$ and let $\chi$ be a character of $F^\times$ with $a(\chi)=d(K/F).$Then, 
    $$a(\chi_K)=\begin{cases}
 0       & \chi=w_{K/F} \text{ on } \mathfrak{o}_F^\times,\\ 
 a(\chi)-1 & \text{otherwise.}   
 \end{cases}$$
\end{proposition}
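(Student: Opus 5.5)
The plan is to bound $a(\chi_K)$ from above using the norm filtration results already recorded, and then use the fact that the residue field of $K$ is $\mathbb{F}_2$ to pin down the exact value. Throughout, $F=\mathbb{Q}_2$, $K/F$ is ramified with $d=d(K/F)\in\{2,3\}$, and $\chi_K=\chi\circ N_{K/F}$.

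\emph{Upper bound.} By item (3) in the list following the commutative diagrams above, $N_{K/F}(U_K^{d-1})\subset U_F^{d}$. Since $a(\chi)=d$, $\chi$ is trivial on $U_F^d$. Hence $\chi_K$ is trivial on $U_K^{d-1}$, which gives $a(\chi_K)\le d-1$.

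\emph{Key observation.} Because $K/\mathbb{Q}_2$ is totally ramified, its residue field is $\mathbb{F}_2$, so $\mathfrak{o}_K^\times=U_K^1$. Consequently $\chi_K$ is trivial on $U_K^1$ if and only if it is trivial on all of $\mathfrak{o}_K^\times$, that is, if and only if $a(\chi_K)=0$. So $a(\chi_K)$ never equals $1$.
\begin{itemize}
\item For $d=2$ this means $a(\chi_K)\in\{0,1\}$, and both values are still possible at this stage.
\item For $d=3$ the upper bound gives $a(\chi_K)\le 2$, and the observation rules out $1$, so $a(\chi_K)\in\{0,2\}$.
\end{itemize}
In both cases $a(\chi_K)\in\{0,\,d-1\}=\{0,\,a(\chi)-1\}$, and it remains only to decide when $\chi_K$ is unramified.

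\emph{Unramified criterion.} Local class field theory gives $[F^\times:N_{K/F}(K^\times)]=2$, and $w_{K/F}$ is the character with kernel $N_{K/F}(K^\times)$. Since $K/F$ is ramified, $w_{K/F}$ is nontrivial on $\mathfrak{o}_F^\times$. I expect the following identification to be the only genuinely delicate step:
$$N_{K/F}(\mathfrak{o}_K^\times)=N_{K/F}(K^\times)\cap\mathfrak{o}_F^\times=\ker\bigl(w_{K/F}|_{\mathfrak{o}_F^\times}\bigr),$$
a subgroup of index $2$ in $\mathfrak{o}_F^\times$. The first equality holds because $K/F$ is totally ramified: if $N(x)$ is a unit then $v_F(N(x))=v_K(x)$ forces $x\in\mathfrak{o}_K^\times$.

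With this in hand, $\chi_K$ is unramified if and only if $\chi$ is trivial on $\ker(w_{K/F}|_{\mathfrak{o}_F^\times})$. That subgroup has index $2$ in $\mathfrak{o}_F^\times$, so this happens if and only if $\chi|_{\mathfrak{o}_F^\times}$ is either trivial or equal to $w_{K/F}|_{\mathfrak{o}_F^\times}$. Since $a(\chi)=d>0$, the character $\chi$ is ramified, so the trivial option is excluded. Therefore $a(\chi_K)=0$ exactly when $\chi=w_{K/F}$ on $\mathfrak{o}_F^\times$, and $a(\chi_K)=a(\chi)-1$ otherwise, as claimed.
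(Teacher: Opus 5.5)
Your proof is correct, but it takes a different route from the paper's.

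\textbf{The paper's route.} It argues by enumeration. There are $2^{n-2}$ characters of conductor $n$ up to unramified twist. For $d=2$ the only one is (up to twist) $w_{K/F}$. For $d=3$ the only ones are $w_{K/F}$ and $\chi_2 w_{K/F}$ with $a(\chi_2)=2$. In the second case $\chi_K$ agrees with $(\chi_2)_K$ on $\mathfrak{o}_K^\times$, and $a((\chi_2)_K)=2$ by the case $a(\varkappa)<d$ of \cref{Characters}.

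\textbf{Your route.} You pin down $a(\chi_K)$ with three ingredients:
\begin{itemize}
\item the bound $a(\chi_K)\le d-1$ coming from $N_{K/F}(U_K^{d-1})\subset U_F^d$;
\item the exclusion of the value $1$ via $\mathfrak{o}_K^\times=U_K^1$;
\item the structural criterion $N_{K/F}(\mathfrak{o}_K^\times)=\ker\bigl(w_{K/F}|_{\mathfrak{o}_F^\times}\bigr)$, which decides exactly when the value is $0$.
\end{itemize}

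\textbf{What each approach buys.} Your route avoids the paper's implicit use of $a(w_{K/F})=d(K/F)$, the decomposition $\chi=\chi_2 w_{K/F}$, and the earlier corollary. Your unramifiedness criterion also holds for any ramified $\chi$ over any totally ramified quadratic extension. The price is that the exact value $d-1$ drops out only because $d\le 3$ leaves a single surviving value. The paper's enumeration instead gives the explicit shape of $\chi$, which it reuses later (e.g.\ $\chi=\chi_2 w_{K/F}$ in the non-minimal exceptional case).

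\textbf{A small inconsistency.} For $d=2$, your bound $a(\chi_K)\le 1$ together with $a(\chi_K)\neq 1$ already forces $a(\chi_K)=0$. So the sentence ``both values are still possible'' is wrong. The ``otherwise'' branch is simply vacuous when $d=2$: every conductor-$2$ character agrees with $w_{K/F}$ on $\mathfrak{o}_F^\times$, since $\mathfrak{o}_F^\times/U_F^2$ has order $2$. This matches the paper's remark that the second case occurs only for $d(K/F)=3$, and your final deduction is unaffected.
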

\begin{proof}
    Let $a(\chi)\leq n,$ then $\chi$ is a character of $\mathfrak{o}^{\times}_{F}/U_{n}$. The latter is a finite group of cardinality $2^{n-1}.$ Therefore the number of characters of conductor$=n$ is given by $2^{n-1}-2^{n-2}=2^{n-2},$ upto unramified twist. When $n=2$ there is only a single character upto unramified twist, which we call $\chi_2$. Since there are two quadratic extensions of $F$ with discriminant $2,$ we get $a(\chi_K)=0.$ When $a(\chi)=3$ we see that there are $2$ characters upto unramified twist. Clearly $w_{K/F}$ is one and $\chi=\chi_2.w_{K/F}$ is the other. The case when $\chi=w_{K/F}$ is clear, and for the second one we get $\chi\circ N_{K/F}=\chi_{2}\circ N_{K/F}$ on $\mathfrak{o}_F^\times.$ Hence we get $a(\chi_K)=2=a(\chi)-1.$
\end{proof}
\begin{remark}
    The second case in the previous proposition only happens when $d(K/F)=3.$
\end{remark}
\begin{theorem}
    Let $\pi$ be a representation of $GL(2,\mathbb{Q}_2)$ which is not an exceptional representation. Let $K$ be a ramified quadratic extension of $\mathbb{Q}_2.$ We further assume that when $\pi$ is  principal series representation,$\pi$ has trivial central character. Then  
    \begin{enumerate}
        \item Let $a(\pi)<2d(K/F).$ Furthermore if $\pi$ is supercuspidal assume that $\pi$ is not induced from the unramified quadratic extension of $F.$ Then
        $$a(\pi_K)=a(\pi).$$
        
        \item When  $a(\pi)>2d(K/F),$ 
          $$a(\pi_K)=2(a(\pi)-d(K/F)).$$
        \item When $a(\pi)=2d(K/F)$ and $\pi$ is not supercuspidal. Then
        $$a(\pi_K)=\begin{cases}
            0&\pi \text{ is principal series and }\chi=w_{K/F} \text{ on } \mathfrak{o}^\times_F.  \\
            1&\pi \text{ is a special representation and } \chi=w_{K/F} \text{ on } \mathfrak{o}^\times_F. \\
            a(\pi)-2&\text{otherwise.}
        \end{cases}$$
        The last case only happens when $d(K/F)=3$ by the previous remark.
        \item When $a(\pi)=2d(K/F)$ and $\pi$ is supercuspidal assume that 
        \begin{enumerate}
        \item $L$ is not unramified, or
        \item When $d(K/F)=3,d(L/F)\neq 2.$
        \end{enumerate}
        Then $$a(\pi_K)=a(\pi)=2(a(\pi)-d(K/F)).$$
        \end{enumerate}
\end{theorem}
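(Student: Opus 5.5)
We argue case by case according to the type of $\pi$, reducing every conductor computation to conductors of characters. For those we use \cref{Characters} and \cref{Prop:char}, together with the base change table for $GL(2)$ and \cref{lem:disc} for discriminants in towers. Write $d=d(K/F)\in\{2,3\}$ throughout.

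\emph{Principal series.} Let $\pi=\pi(\chi,\chi^{-1})$, which is the general form since $w_\pi=1$. Then $a(\pi)=2a(\chi)$ and $\pi_K=\pi(\chi_K,\chi_K^{-1})$, so $a(\pi_K)=2a(\chi_K)$.
\begin{itemize}
\item If $a(\pi)<2d$, then $a(\chi)<d$, and \cref{Characters} gives $a(\chi_K)=a(\chi)$.
\item If $a(\pi)>2d$, then $a(\chi)>d$, and $a(\pi_K)=2(2a(\chi)-d)=2(a(\pi)-d)$.
\item If $a(\pi)=2d$, then $a(\chi)=d$, and \cref{Prop:char} gives either $0$ (when $\chi=w_{K/F}$ on $\mathfrak{o}_F^\times$) or $2(d-1)=a(\pi)-2$.
\end{itemize}

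\emph{Special representations.} Let $\pi=\mathrm{St}\otimes\chi$, so $\pi_K=\mathrm{St}_K\otimes\chi_K$.
\begin{itemize}
\item If $\chi$ is unramified, both conductors equal $1$.
\item Otherwise $a(\pi)=2a(\chi)$, and $a(\pi_K)$ equals $2a(\chi_K)$ when $\chi_K$ is ramified and $1$ when it is not.
\end{itemize}
The three ranges then follow exactly as in the principal series case. The value $1$ arises precisely when $\chi_K$ is unramified, i.e.\ when $\chi=w_{K/F}$ on units. Note that in case (1) $\chi_K$ stays ramified because $a(\chi_K)=a(\chi)>0$.

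\emph{Dihedral supercuspidals.} Let $\pi=\pi(L,\varkappa)$.
\begin{itemize}
\item If $L\cong K$, then $\pi_K=\pi(\varkappa,\varkappa^\tau)$ with $a(\pi_K)=2a(\varkappa)$ and $a(\pi)=a(\varkappa)+d$. Since $\varkappa\neq\varkappa^\tau$ forces $a(\varkappa)\ge d$, we have $a(\pi)\ge 2d$. Hence case (1) cannot occur. In cases (2) and (4) we get $2a(\varkappa)=2(a(\pi)-d)$, which equals $a(\pi)$ exactly when $a(\pi)=2d$.
\item If $L\ncong K$, then $\pi_K=\pi(LK,\varkappa_{LK/L})$, and
$$a(\pi_K)=f(LK/K)\,a(\varkappa_{LK/L})+d(LK/K).$$
We compute $a(\varkappa_{LK/L})$ by applying \cref{Characters} to the quadratic extension $LK/L$, whose discriminant $d(LK/L)$ is read off from \cref{lem:disc}(2),(3). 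The same lemma gives $d(LK/K)$ in terms of $d(L/F)$, $d(K/F)$ and $d(L'/F)$, where $L'$ is the third quadratic subfield.
\end{itemize}
For $L/F$ unramified, $LK/L$ is ramified with discriminant $d$, and the computation is uniform: $a(\varkappa_{LK/L})=2a(\varkappa)-d$ once $a(\varkappa)>d$. This gives case (2), since $a(\pi)=2a(\varkappa)>2d$. The hypothesis in (1) excludes the low range, where the answer would differ.

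For $L/F$ ramified, note that over $\mathbb{Q}_2$ the pair $(d(L/F),d(K/F))$ takes only a few values: $(2,2)$, $(2,3)$, $(3,2)$ or $(3,3)$. \cref{lem:disc}(4),(5) pins down $d(L'/F)$ in each of these (and $LK/K$ may even be unramified). So we plan a small table: for each pair, record $d(LK/L)$, $f(LK/K)$ and $d(LK/K)$. We then plug in \cref{Characters} with threshold $d(LK/L)$ and split according to whether $a(\varkappa)=a(\pi)-d(L/F)$ is below, equal to, or above that threshold. The claimed formulas should drop out; for example, in the regime $a(\pi)>2d$,
$$2\bigl(a(\varkappa)+d(L/F)\bigr)-2d=2(a(\pi)-d).$$

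\textbf{Main obstacle.} The main difficulty is the boundary case $a(\varkappa)=d(LK/L)$. There \cref{Characters} only gives an upper bound, and \cref{Prop:char} is stated over $\mathbb{Q}_2$, not over $L$. We would handle this by working directly with the norm filtration of \cref{prop:norm} on $L$, using the fact that $\varkappa\neq\varkappa\circ\sigma$ to show that $\varkappa$ cannot agree with $w_{LK/L}$ on units. The exclusions in (4), namely $L$ unramified, or $d=3$ with $d(L/F)=2$, are presumably exactly the configurations where this fails, so in every allowed case the generic value $a(\varkappa_{LK/L})=2a(\varkappa)-d(LK/L)$ or $a(\varkappa)$ persists. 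Checking this for each row of the table is the bulk of the work.
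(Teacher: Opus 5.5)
Your case split is the paper's: principal series and $\mathrm{St}\otimes\chi$ reduce to $a(\chi_K)$ via \cref{Characters} and \cref{Prop:char}. The dihedral $\pi(L,\varkappa)$ splits into $L$ unramified, $L\cong K$, and ramified $L\ncong K$, with $a(\pi_K)=f(LK/K)\,a(\varkappa_{LK/L})+d(LK/K)$. All of that is sound.

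The problem is your ``main obstacle'' paragraph. The boundary $a(\varkappa)=d(LK/L)$ never arises in an allowed case, and the argument you propose for it is wrong. Fill in your table, using $a(\varkappa)\ge d(L/F)$ for every ramified $L$, not just $L\cong K$:
\begin{itemize}
\item For $(d(L/F),d(K/F))=(2,2)$, $LK/L$ is unramified.
\item For $(3,3)$, \cref{lem:disc}(5) only gives $d(L'/F)<3$, so there are two sub-cases rather than one. Either $LK/L$ is unramified, or $d(LK/L)=d(LK/K)=2<3\le a(\varkappa)$.
\item For $(3,2)$, $d(LK/L)=2<a(\varkappa)$ and $d(LK/K)=4$.
\item For $(2,3)$, $d(LK/L)=4$ and $d(LK/K)=2$, and $a(\varkappa)=4$ exactly when $a(\pi)=6=2d(K/F)$.
\item For $L$ unramified, $a(\varkappa)=d(LK/L)=d$ exactly when $a(\pi)=2d$.
\end{itemize}
So the boundary occurs only in the two configurations excluded in (4). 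Everywhere else \cref{Characters}, or unramified base change, applies strictly off the boundary and gives the stated formulas directly. That is all the paper's proof does.

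Your proposed mechanism would also fail if it were ever needed. For ramified $L$ (residue field $\mathbb{F}_2$), both ``generic'' values $a(\varkappa)$ and $2a(\varkappa)-d(LK/L)$ equal $d(LK/L)$ at the boundary. But \cref{Characters} forces $a(\varkappa_{LK/L})<d(LK/L)$ there. Showing $\varkappa\neq w_{LK/L}$ on $\mathfrak{o}_L^\times$ is indeed possible: it follows from $\varkappa\neq\varkappa\circ\sigma$, since $w_{LK/L}=w_{K/F}\circ N_{L/F}$ is $\sigma$-invariant. However, it only excludes $a(\varkappa_{LK/L})=0$; it does not restore the generic value. For instance, with $(d(L/F),d(K/F))=(2,3)$ and $a(\pi)=6$, you get $a(\pi_K)=a(\varkappa_{LK/L})+2\le 5<6$. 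This is exactly why that configuration is excluded rather than covered. Replace that paragraph with the observation that the hypotheses guarantee $a(\varkappa)\neq d(LK/L)$.
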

\begin{proof}
We do this case by case.
  
        When $\pi$ is principal series we have  $\pi =  \pi(\chi,\chi^{-1})$ and $\pi_K= \pi(\chi_K,\chi^{-1}_K).$ Thus $a(\pi_K)=2a(\chi_K)$ and the theorem follows from \cref{Characters} and for the case $a(\chi)=d$ from \cref{Prop:char} that $$a(\pi_K)=2{a(\chi_ K)}=\begin{cases}
            2a(\chi)& a(\chi)<d(K/F)\\
             0       &a(\chi)=d(K/F) \text{ and } \chi=w_{K/F} \text{ on } \mathfrak{o}_F^\times,\\ 
            2(a(\chi)-1)       &a(\chi)=d(K/F) \text{ and } \chi\neq w_{K/F} \text{ on } \mathfrak{o}_F^\times,\\ 
            2(2a(\chi)-d)& a(\chi)>d(K/F).
        \end{cases}.$$ 
        Noting $a(\pi)=2a(\chi)$ gives the result.
        Now when $\pi$ is a special representation we have $\pi= \text{St}\otimes \chi.$ This is similar to the previous case noting that $a(\pi)=\max\{1,2a(\chi)\}.$

        Now let us deal with the case $\pi= \pi(L,\varkappa)$ where $L$ is the unramified quadratic extension. We assume that $a(\varkappa)>d(K/F),$ thus $a(\pi)>2d(K/F).$ We have $\pi_K= \pi(LK,\varkappa_{LK/L}).$ Since $L$ is unramified and $K$ is ramified, we have $LK/K$ unramified and hence $a(\pi_{K})=2a(\varkappa_{LK/L}).$ Using \cref{lem:disc}, we get $d(LK/L)=d(K/F).$ The result just follows from noting that $a(\pi)=2a(\varkappa)$ and $a(\varkappa)>d(K/F).$ 

      Let us look at the case where $\pi= \pi(L,\varkappa)$ where $L\cong K.$ We clearly have $a(\pi)\geq 2d(K/F).$ Since $\pi_K=\pi(\varkappa,\varkappa\circ \sigma)$ and thus $a(\pi_K)=a(\varkappa)+a(\varkappa\circ \sigma)=2a(\varkappa)=2(a(\pi)-d(K/F)).$

         Now suppose $\pi=\pi(L,\varkappa)$ where $L$  differs from $K$ by the unramified quadratic character. Since $d(L/F)=d(K/F),a(\pi)\geq2d(K/F).$Then $\pi_{K}=\pi(LK,\varkappa_{LK}).$ Since both $L$ and $K$ differ by the unramified character, we have $LK/K,$ and $LK/L$ unramified, and hence $a(\pi_{K})=2a(\varkappa_{LK})=2a(\varkappa)=2(a(\pi)-d(L/F)).$ The result just follows from noting that $ d(K/F)=d(L/F).$

         Finally, when $\pi=\pi(L,\varkappa)$ and $L$ and $K$ are both ramified and $\pi$ is not one of the cases above. Let us start with $d(K/F)=3.$ 
     Using \cref{Prop:char}, it can be easily seen that both $d(LK/K)=2$ and $d(LK/L)=8-2d(L/F),$ thus both are ramified. From Corollary 4.4, we have $$a(\varkappa_{LK})=\begin{cases}
    a(\varkappa)& a(\varkappa)<d(LK/L),\\
    2a(\varkappa)-d(LK/L)& a(\varkappa)>d(LK/L).\\
\end{cases}$$
Now $a(\pi_{K})=a(\varkappa_{LK})+2$ and $a(\varkappa)=a(\pi)-d(L/F).$ Putting all of this together we have the result.
When $d(K/F)=2,$ the condition on $L$ implies $d(L/F)=3$ and, similar to previous case we get $$a(\pi_K)=2(a(\pi)-d(K/F)).$$ 
\end{proof}

The results so far are summarized below in  \cref{tab:dyadic}.
\begin{table}
    \centering
    \small
    \setlength{\tabcolsep}{6pt}
    \renewcommand{\arraystretch}{1.15}

    \begin{tabularx}{0.93\textwidth}
            {|l|>{\raggedright\arraybackslash}X|c|}
    \hline
        $\pi$ & Conditions & $a(\pi_K)$ \\
    \hline
        Non supercuspidal representation
        & $a(\pi)<2d(K/F)$
        & $a(\pi)$ \\
    \hline

        Supercuspidal
        & $\pi=\pi(L,\varkappa)$, $a(\pi)<2d(K/F)$, and $L$ is not unramified
        & $a(\pi)$ \\
    \hline
        Principal series
        & $a(\pi)=2d(K/F)$ and $\pi_K$ is not unramified
        & $a(\pi)-2$ \\
    \hline
        $St\otimes\chi$
        & $a(\pi)=2d(K/F)$ and $\chi=w_{K/F}$ on $\mathfrak{o}_F^\times$
        & $1$ \\
    \hline
        $St\otimes\chi$
        & $a(\pi)=2d(K/F)$ and $\chi\neq w_{K/F}$ on $\mathfrak{o}_F^\times$
        & $a(\pi)-2$ \\
    \hline
        Any representation
        & $a(\pi)>2d(K/F)$
        & $2(a(\pi)-d(K/F))$ \\
    \hline
        Supercuspidal with $\pi=\pi(L,\varkappa)$
        & $a(\pi)=2d(K/F)$ and $L$ is not unramified,
          or $d(K/F)=3$ and $d(L/F)\ne2$
        & $2(a(\pi)-d(K/F))$ \\\hline
    \end{tabularx}
    \caption{Summary of results when $F$ is a totally ramified dyadic field  $K/F$ is a ramified quadratic extension (When $\pi$ is principal series, we assume $\pi$ has trivial central character)}
    \label{tab:dyadic}
\end{table}    

    \begin{remark}\label[remark]{rmk:conductor}
        The only remaining cases are 
        \begin{enumerate}
        \item  $a(\pi)=6,$ $\pi=\pi(L,\varkappa)$  $d(L/F)=2,d(K/F)=3$ 
        This corresponds to $d(LK/L)=4=a(\varkappa).$ Similar to the previous propositions about characters of conductor $2$ we will describe how to deal with this case. Let $\chi_2,\chi_3,\chi_4$ be characters of conductor $2,3$ and $4$ respectively. Clearly $\chi_2\chi_4,\chi_3\chi_4$ and $\chi_2\chi_3\chi_4$ are all characters of conductor 4. Since \cref{Prop:char} implies we have $2^{n-2}$ characters of conductor $n,$ these are all characters of conductor $4.$ Thus $a(\chi_K)=0,2,3$ respectively and hence $a(\pi_K)=a(\chi_K)+d(LK/L)=4,6 \text{ or } 7$ respectively.
        \item $a(\pi)<2d(K/F),\pi=\pi(L,\varkappa),$ $L$ is unramified. We have to exclude this case since \cref{Characters} only applies to totally ramified extensions of $\mathbb{Q}_2.$
        \end{enumerate}
\end{remark}

\section{Primitive representations of \texorpdfstring{$\mathbb{Q}_{2}$}{Q2}}
Let us begin with a couple of facts about supercuspidal representations of $G,$ all of which can be found in \cite{MR2234120} Chapter 10. 
\begin{proposition}

    Let $\pi$ be a supercuspidal representation of $G.$ Write $\mathcal{I}(\pi)=\{\chi:F^\times \rightarrow \mathbb{C}^\times; \chi \bigotimes\pi\cong\pi\}.$ Then we have \begin{enumerate}
        \item $\chi \in \mathcal{I}(\pi)$ satisfies $\chi^2=1.$
        \item $\mathcal{I}(\pi)$ is a group of size dividing $4.$
        \item $\omega_{E/F}\in\mathcal{I}(\pi) \iff \pi \cong \sigma(\varkappa,E), $ for some $\varkappa:E^\times\rightarrow\mathbb{C}^\times.$
    \end{enumerate}
\end{proposition}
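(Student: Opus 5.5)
The natural route is through the Local Langlands Correspondence: transfer the problem to the $2$-dimensional irreducible representation $\rho=\lambda_F(\pi)$ of $W_F$, and argue with Schur's lemma and Mackey theory. Since the correspondence is compatible with twisting, $\lambda_F(\chi\otimes\pi)=\chi\otimes\rho$, where $\chi$ is viewed as a character of $W_F$ via $\text{Art}$. So $\mathcal{I}(\pi)$ is the group of characters $\chi$ of $W_F$ with $\chi\otimes\rho\cong\rho$. This set is clearly a group under multiplication. We then prove the three statements about $\rho$.

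For (1), take determinants of an isomorphism $\chi\otimes\rho\cong\rho$. This gives $\chi^2\det\rho=\det\rho$, so $\chi^2=1$.

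For (3), suppose $\chi\neq 1$ lies in $\mathcal{I}(\pi)$. By (1), $\chi$ is quadratic, so $\chi=\omega_{E/F}$ for a quadratic extension $E/F$, and $\ker\chi=W_E$.
\begin{itemize}
\item Fix an isomorphism $A:\rho\to\chi\otimes\rho$, i.e. an operator with $A\rho(g)A^{-1}=\chi(g)\rho(g)$. Then $A^2$ commutes with $\rho$, so by Schur's lemma $A^2$ is a scalar.
\item Rescale $A$ so that $A^2=1$. Since $A$ is not a scalar (because $\chi\neq1$), its eigenspaces are two lines.
\item These lines are stable under $W_E$, since $A$ commutes with $\rho(g)$ when $\chi(g)=1$. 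They are swapped by any $g\notin W_E$.
\item Hence $\rho|_{W_E}=\varkappa\oplus\varkappa'$, and Frobenius reciprocity gives $\rho\cong\text{Ind}_{W_E}^{W_F}\varkappa$.
\item Irreducibility of $\rho$ forces $\varkappa\neq\varkappa\circ\sigma$. Therefore $\pi\cong\pi(E,\varkappa)$, which is the paper's $\sigma(\varkappa,E)$.
\end{itemize}
Conversely, $\omega_{E/F}\otimes\text{Ind}\,\varkappa\cong\text{Ind}(\varkappa\cdot\omega_{E/F}|_{W_E})=\text{Ind}\,\varkappa$, because $\omega_{E/F}$ is trivial on $W_E$.

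For (2), use Mackey/Frobenius reciprocity: $\text{Hom}(\rho,\chi\otimes\rho)$ is the $\chi$-isotypic part of $\rho\otimes\rho^\vee=\text{End}(\rho)$. This space has dimension $4$ and contains the trivial character exactly once. Each $\chi\in\mathcal{I}(\pi)$ occurs with multiplicity exactly one, by Schur's lemma. So $|\mathcal{I}(\pi)|\le 4$. By (1), $\mathcal{I}(\pi)$ is an elementary abelian $2$-group, so its order is $1$, $2$ or $4$, which divides $4$.

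The main subtlety is justifying that the correspondence commutes with twisting. This is a standard property of the LLC for $GL(2)$ (equivalently, of the construction in \cite{MR2234120}), and we would cite it rather than prove it. Alternatively, one could argue directly on the $GL(2)$ side as in \cite{MR2234120} Chapter 10, via the character of $\pi$ or via the explicit type-theoretic construction. The Galois-side argument is much cleaner, so that is the route to take.
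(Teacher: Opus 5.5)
Your proof is correct and is essentially the standard argument. The paper gives no proof of its own, only a citation to \cite{MR2234120} Chapter 10, where these facts are established on the Weil-group side for irreducible $2$-dimensional representations of $W_F$ via determinants, Schur's lemma and Frobenius reciprocity, and then transported to $GL(2,F)$ by compatibility of the Langlands correspondence with twisting, exactly as you do.

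Two cosmetic points, neither affecting your conclusion: $\mathrm{Hom}_{W_F}(\rho,\chi\otimes\rho)$ is the $\chi^{-1}$-eigenspace of $\mathrm{End}(\rho)$ rather than the $\chi$-eigenspace, which is harmless because $\chi=\chi^{-1}$ by (1). Also, the bound $|\mathcal{I}(\pi)|\le 4$ implicitly uses linear independence of eigenvectors for distinct characters.
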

\begin{definition}\label[definition]{def:exceptional}
    Let $\pi$ be as above, then we say $\pi$ is 
    \begin{enumerate}
        \item \emph{exceptional} if $|\mathcal{I}(\pi)|=1.$
        \item \emph{simply imprimitive} if  $|\mathcal{I}(\pi)|=2.$
         \item \emph{triply imprimitive} if  $|\mathcal{I}(\pi)|=4.$
    \end{enumerate}
    Exceptional representations are further of two types
    \begin{itemize}
    \item Tetrahedral type: There is a Galois cubic extension $L/F$ such that $\pi_L$ is triply imprimitive. 
    \item Octahedral type: There is a non-Galois cubic extension $L/F$ such that $\pi_L$ is imprimitive. Moreover if $\tilde{L}$ is the Galois closure of $L$ then $\pi_{\tilde{L}}$ is triply imprimitive.
    \end{itemize}
\end{definition}
We restrict to the case $F=\mathbb{Q}_2$. 
\begin{proposition}
If $\pi$ is a minimal exceptional representation of $G,$ then $a(\pi)=3,5 \textit{ or }7.$ If $\pi$ has trivial central character then $a(\pi)=3 \textit{ or } 7$ and these representations are octahedral. Moreover when $a(\pi)=5,$ $\pi$ is tetrahedral.
\end{proposition}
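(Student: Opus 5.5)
The plan is to pass through the Local Langlands Correspondence to the Galois side. An exceptional $\pi$ corresponds to a primitive irreducible $2$-dimensional representation $\sigma$ of $W_{\mathbb{Q}_2}$. Its projective image $\bar\sigma(W_{\mathbb{Q}_2})\subset PGL_2(\mathbb{C})$ is a finite primitive group, so it is $A_4$ or $S_4$; $A_5$ is excluded because $W_{\mathbb{Q}_2}$ modulo wild inertia is prosolvable with a specific structure. Projective images $A_4$ and $S_4$ give tetrahedral and octahedral type respectively, in the sense of \cref{def:exceptional}.

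The first step is to locate the kernel field. The projective image is the Galois group of some finite extension $M/\mathbb{Q}_2$. Its Klein four subgroup $V\subset A_4$ corresponds to a field over which $\sigma$ becomes triply imprimitive. Since the local Galois group is solvable with normal wild inertia, the structure theory gives the following:
\begin{itemize}
\item $V$ must be the wild inertia image, a $2$-group.
\item The quotient $A_4/V\cong \mathbb{Z}/3$, or $S_4/V\cong S_3$, must be a tame quotient.
\end{itemize}
In the tetrahedral case the cubic extension is therefore the unramified one. In the octahedral case it is the non-Galois cubic $\mathbb{Q}_2(\sqrt[3]{2})$ type field, with Galois closure having $S_3$ group.

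Next I would compute conductors using Weil's explicit classification (the reference \cite{MR506171}, Chapter 3), or equivalently Artin conductors via the upper ramification filtration. The conductor is $a(\sigma)=\sum_{i\ge0}\frac{|G_i|}{|G_0|}\operatorname{codim}V^{G_i}$. The steps are:
\begin{enumerate}
\item Identify the ramification groups: $G_0$ is $A_4$ or $Q_8$-type (lifting $V$) with upper breaks constrained by Hasse--Arf on abelian subquotients.
\item Run through the finitely many possible $V$-extensions of the cubic field $L$ with the correct Galois action. Over $L$ these are biquadratic extensions, and \cref{lem:disc} applied over $L$ bounds their discriminants.
\item Combine this with the minimality assumption, that no twist lowers the conductor. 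In this range only the values $3,5,7$ survive.
\end{enumerate}

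For the central character statement, $\det\sigma$ is trivial only if the $S_4$ case occurs. In the $A_4$ case with unramified cubic field, the determinant is forced to be ramified of the specific conductor attached to $a=5$. So trivial central character forces octahedral type, with $a\in\{3,7\}$.

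The main obstacle is step 2: pinning down the exact upper ramification breaks of the $V$-extension over $L$. Given the dyadic complexity, I would instead rely directly on Weil's tables in \cite{MR506171} (and Kutzko's count of exceptional representations), quoting the list of conductors and types rather than rederiving them.
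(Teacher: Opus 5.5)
Your proposal has the same logical status as the paper's proof, but the one place where you argue rather than cite, the central-character paragraph, has a genuine gap.

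\textbf{Relation to the paper.} The paper's proof is a pure citation of \cite{MR4705658}, Proposition 3.9, and \cite{MR476703}, Theorem 5.4. Your proposal also ends as a citation, of Weil's tables as presented in \cite{MR506171} and Kutzko's count. All the weight rests on those references. Steps 2 and 3 as written exclude nothing: they give no reason why only $3,5,7$ survive minimally, or why $a=5$ forces tetrahedral type.

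\textbf{What is sound, and the small repairs it needs.}
\begin{itemize}
\item $A_5$ is excluded because the whole local Galois group is prosolvable: wild inertia is pro-$2$ and the tame quotient is metacyclic. Solvability modulo wild inertia alone is not enough.
\item The wild inertia image is $V$ because $A_4$ and $S_4$ are not metacyclic.
\item A $\mathbb{Z}/3$ tame quotient of $G_{\mathbb{Q}_2}$ is unramified because $3\nmid q-1=1$, so the tetrahedral cubic is $L_2$.
\item In the octahedral case the $S_3$ quotient forces the cubic to be $\mathbb{Q}_2(\sqrt[3]{2})$.
\end{itemize}
Three smaller inaccuracies in the octahedral case and in step 2:
\begin{itemize}
\item The linear inertia image is a central extension of $A_4$, of $SL(2,3)$ type, not $A_4$ itself, which has no faithful $2$-dimensional representation.
\item Over the cubic $L_1$ the projective image is $D_4$, not $V$. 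The biquadratic extension lives over the sextic Galois closure.
\item Part (5) of \cref{lem:disc} needs a base totally ramified over $\mathbb{Q}_2$. Neither $L_2$ nor that sextic is.
\end{itemize}

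\textbf{The gap.} You assert that ``the determinant is forced to be ramified'' with no justification. Ramification of $\det\sigma$ is also not twist-invariant, so it is the wrong quantity to track. The right invariant is $\det\sigma(-1)$. Since $\det(\sigma\otimes\chi)=\chi^2\det\sigma$ and $\chi^2(-1)=1$, it does not change under twisting. Moreover, a character of $\mathbb{Q}_2^\times$ is a square exactly when it is trivial on $-1$. So what must be shown is that $\det\sigma(-1)=-1$ for every tetrahedral $\sigma$.

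\textbf{One way to close it.}
\begin{itemize}
\item Suppose $\det\sigma=1$. The image of $\sigma$ is then the full preimage $SL(2,3)$ of $A_4$ in $SL_2(\mathbb{C})$; it cannot be $A_4$ itself. Its fixed field is therefore a $Q_8$-extension of $L_2$ containing the projective kernel field $M=L_2(\sqrt a,\sqrt b)$.
\item By Witt's criterion, such an embedding requires $(a,b)(a,-1)(b,-1)=1$ in $L_2$.
\item By Kummer theory, since $M/\mathbb{Q}_2$ is Galois with group $A_4$, the span of $a,b$ is the $2$-dimensional nontrivial isotypic part $W$ of $L_2^\times/L_2^{\times 2}$ under $\operatorname{Gal}(L_2/\mathbb{Q}_2)$.
\item The Hilbert pairing is Galois-invariant, so $W$ is orthogonal to the invariant part. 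The functional $x\mapsto(x,-1)$ is invariant, hence vanishes on $W$, so $(x,x)=(x,-1)=1$ there.
\item The pairing on $W$ is therefore alternating and nondegenerate, which gives $(a,b)=-1$.
\item Witt's criterion fails, so no tetrahedral representation has trivial central character.
\end{itemize}
Without such an argument, or a reference that explicitly records determinants, the central-character clause is unproved. With it, together with the tabulated conductors and types, the statement follows.
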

\begin{proof}
    See \cite{MR4705658} Proposition 3.9 and \cite{MR476703} Theorem 5.4.
\end{proof}

\begin{center}

\begin{tikzpicture}[node distance=2cm,scale=0.7, transform shape]

\node (top) {$L' \cdot LK$};

\node (lprime) [below of=top, yshift=-0.001cm ] {$L^\prime$};
\node (l0) [below left of=top, xshift=-1cm] {$L_0$};

\node (lk1) [below right of=top,xshift=1cm] {$LK$};

\node (l) [below of=lprime] {$L$};

\node (k) [below of=lk1, yshift=-1.5cm] {$K$};

\node (fq) [below of=l, yshift=-1cm] {$F=\mathbb{Q}_2$};

\draw (top) -- (lprime);
\draw (top) -- (l0);
\draw (l0) -- (l);
\draw (top) -- (lk1);
\draw (lprime) -- (l);
\draw (l) -- (fq);
\draw (lk1) -- (k);
\draw (l) -- (lk1);
\draw (fq) -- (k);
\end{tikzpicture}
\end{center}

Here $L$ is a cubic extension of $F$ and $K$ is a ramified quadratic extension of $F,$ and we aim to completely determine $\pi_K.$ It is easy to see that $\pi_K$ again is an exceptional supercuspidal representation. Since we know that $\pi_L$ is a dihedral supercuspidal representation, we know it corresponds to a quadratic extension $L^\prime$ of $L$ and a character $\varkappa$ of $L^{\prime \times}.$ Starting with a representation $\pi$ of $GL(2,F),$ we will study $a(\pi_{LK}).$ Since the order in which we do base change is immaterial, we know that $a(\pi_{LK})=a((\pi_{K})_{LK}).$ Since $LK/K$is tamely ramified we can reduce our problem to the dihedral case dealt with in the last section. For our purposes this study involving conductors was sufficient.

When $F=\mathbb{Q}_2,$ there exist two cubic extensions over $F,$ a totally ramified, non-Galois extension,$L_1=\mathbb{Q}_2[\sqrt[3]{2}]$ and an unramified, Galois extension $L_2=\mathbb{Q}_2[\alpha]$ where $\alpha^3+\alpha+1=0.$ Let us dive a bit deeper into the structure of the biquadratic extension $L^\prime\cdot LK/L$ for each cubic extension $L.$ The important elements for our purpose are the discriminants of each of these extensions, which are summarized below.
\begin{lemma} When $d(L^\prime/L_1)=2,$ we have

\begin{tabular}{|c|c|c|c|c|c|}
\hline
$d(K/F)$ & $d(L_1K/L_1)$ 
& $d(L_0/L_1)$ 
& $d(L^\prime\!\cdot\!L_1K/L_1)$ 
& $d(L^\prime\!\cdot\!L_1K/L^\prime)$ 
& $d(L^\prime\!\cdot\!L_1K/L_1K)$ \\
\hline
$2$ & $4$ & $4$ & $10$ & $6$ & $2$ \\
\hline
$3$ & $7$ & $7$ & $16$ & $12$ & $2$ \\
\hline
\end{tabular}
\end{lemma}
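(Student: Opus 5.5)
The plan is to compute everything from the single input $d(L_1K/L_1)$, using the tame machinery of Section 3 and then the tower relations of \cref{lem:disc} applied over the base field $L_1$.

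\emph{Step 1: the column $d(L_1K/L_1)$.} Let $w_{K/F}$ be the quadratic character of $F^\times$ cutting out $K$. By class field theory its conductor is $a(w_{K/F})=d(K/F)$. The extension $L_1K/L_1$ is cut out by the base change $w_{K/F}\circ N_{L_1/F}$, and this character is not unramified because $K\not\subset L_1$ for degree reasons. The extension $L_1/F$ is totally and tamely ramified with $e=3$, so \cref{thm:tamewd} applies with $n=1$ and gives
$$d(L_1K/L_1)=a(w_{K/F}\circ N_{L_1/F})=3d(K/F)-2.$$
This yields $4$ for $d(K/F)=2$ and $7$ for $d(K/F)=3$.

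\emph{Step 2: the column $d(L_0/L_1)$.} Consider the biquadratic extension $L'\cdot L_1K$ of $L_1$. Its three quadratic subextensions are $L'$, $L_1K$ and $L_0$. We have $d(L'/L_1)=2$, while $d(L_1K/L_1)\in\{4,7\}$. These two values are distinct, so the equality case of \cref{lem:disc}(4) gives
$$d(L_0/L_1)=\max\{2,\,d(L_1K/L_1)\}=d(L_1K/L_1).$$
That is, $d(L_0/L_1)=4$ or $7$. Only part (4) is used here, and it holds over any local field, so the fact that $L_1$ is not $\mathbb{Q}_2$ causes no problem.

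\emph{Step 3: the remaining columns.} By \cref{lem:disc}(2), the discriminant of $L'\cdot L_1K$ over $L_1$ is the sum of the three quadratic discriminants: $2+4+4=10$, respectively $2+7+7=16$. Next apply \cref{lem:disc}(3) with $L_i=L'$ and with $L_i=L_1K$. Both of these are ramified over $L_1$, since their discriminants are positive, so the residue degree factor is $f(L_i/L_1)=1$. This gives
$$d(L'\cdot L_1K/L')=10-2\cdot 2=6 \quad\text{and}\quad 16-2\cdot2=12,$$
and
$$d(L'\cdot L_1K/L_1K)=10-2\cdot4=2 \quad\text{and}\quad 16-2\cdot7=2.$$

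\emph{Main obstacle.} The only step that is not pure bookkeeping is Step 1. It requires identifying the discriminant of a quadratic extension with the conductor of its character, and checking that the hypothesis of \cref{thm:tamewd} (that the base-changed character is not unramified) is satisfied. Everything after that follows mechanically from \cref{lem:disc}.
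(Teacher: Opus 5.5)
Your proof is correct up to one fixable justification in Step~1, and Steps 2 and 3 are exactly the paper's argument: the equality case of \cref{lem:disc}(4) gives $d(L_0/L_1)$, and parts (2) and (3) of that lemma give the last three columns.

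\textbf{Step 1 takes a different route.} The paper never passes to characters here. It observes that $L_1/F$ and $L_1K/K$ are both totally tamely ramified of degree $3$, so $d(L_1/F)=d(L_1K/K)=2$. It then computes $d(L_1K/F)$ in two ways with the tower formula, $d(L_1K/L_1)+2d(L_1/F)=d(L_1K/K)+3d(K/F)$, which gives $d(L_1K/L_1)=3d(K/F)-2$. You instead identify $d(L_1K/L_1)$ with the conductor of $w_{K/F}\circ N_{L_1/F}$ and apply the tame conductor formula \cref{thm:tamewd} with $e=3$, $n=1$. The paper's route is more elementary: it uses only the tower formula and the fact that a totally tamely ramified extension of degree $e$ has discriminant exponent $e-1$. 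Yours relies on the cited base-change conductor formula together with the conductor--discriminant formula. In exchange, it fits the representation-theoretic viewpoint of the paper: it is the $n=1$ instance of the computation $a(\pi_{L_1})=3a(\pi)-4$ used later.

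\textbf{The justification to fix.} You need $w_{K/F}\circ N_{L_1/F}$ to be ramified, since that is the hypothesis of \cref{thm:tamewd}. The fact $K\not\subset L_1$ only shows this character is nontrivial. The correct reason is that $e(L_1K/F)$ is divisible by both $e(K/F)=2$ and $e(L_1/F)=3$, so $L_1K/L_1$ is ramified. Equivalently, $w_{K/F}$ has conductor $d(K/F)\ge 2$, so it is wildly ramified, and restriction to the tame extension $L_1$ cannot make it unramified.

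It is also worth stating that $L'\neq L_1K$, because their discriminants over $L_1$ ($2$ versus $3d(K/F)-2$) differ. This is what makes $L'\cdot L_1K/L_1$ biquadratic, so that \cref{lem:disc} applies.
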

\begin{proof}
    Since both $L_1/F$ and $L_1K/K$ are totally tamely ramified of degree $3,$ $ d(L_1/F)=d(L_1K/K)=3-1=2.$ The rest of the proposition is just an application of discriminant in towers. Let us calculate $d(L_1K/F)=d(L_1K/L_1)+2d(L_1/F)=d(L_1K/K)+3d(K/F).$ Thus, we get $d(L_1K/L_1)=3d(K/F)-2.$ Now $d(L_{0}/L_1)=\max\{d(L^\prime/L_1),d(L_1K/L_1)\}=d(L_1K/L_1)$ since the discriminants are unequal. Now \cref{lem:disc} gives  $d(L^\prime\cdot L_1K/L_1).$ Again applying the discriminant formula in towers gives all the other entries in the table.
\end{proof}
\begin{lemma} When $d(L^\prime/L_1)=4,$  we have  

\begin{tabular}{|c|c|c|c|c|c|}
\hline
$d(K/F)$ & $d(L_1K/L_1)$ 
& $d(L_0/L_1)$ 
& $d(L^\prime\!\cdot\!L_1K/L_1)$ 
& $d(L^\prime\!\cdot\!L_1K/L^\prime)$ 
& $d(L^\prime\!\cdot\!L_1K/L_1K)$ \\

\hline
$3$ & $7$ & $7$ & $18$ & $10$ & $4$ \\
\hline
\end{tabular}
\end{lemma}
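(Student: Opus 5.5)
The plan is to repeat the computation of the previous lemma, using only the tower formula for discriminants and \cref{lem:disc} applied over $L_1$. Write $M=L^\prime\cdot L_1K$ for the top field.

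First I compute $d(L_1K/L_1)$. Since $L_1/F$ and $L_1K/K$ are totally tamely ramified cubic extensions, $d(L_1/F)=d(L_1K/K)=2$. Computing $d(L_1K/F)$ along the two towers gives
$$d(L_1K/L_1)+2d(L_1/F)=d(L_1K/K)+3d(K/F),$$
so $d(L_1K/L_1)=3d(K/F)-2=7$ when $d(K/F)=3$. This is consistent with $L_1$ being totally ramified over $\mathbb{Q}_2$ with $e=3$, where the largest possible discriminant of a quadratic extension is $2e+1=7$.

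Next I handle the biquadratic extension $M/L_1$. Because $d(L^\prime/L_1)=4\neq 7=d(L_1K/L_1)$, the fields $L^\prime$ and $L_1K$ are distinct quadratic extensions of $L_1$, so $M/L_1$ is indeed biquadratic and \cref{lem:disc} applies. Its third quadratic subfield is $L_0$. By part $(4)$ of \cref{lem:disc}, the equality case applies since the two discriminants are unequal, so $d(L_0/L_1)=\max\{4,7\}=7$. Part $(2)$ then gives
$$d(M/L_1)=4+7+7=18.$$

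Finally I use part $(3)$ of \cref{lem:disc} to get the remaining two entries.
\begin{itemize}
\item For $L^\prime$: it is ramified over $L_1$ since $d(L^\prime/L_1)=4>0$, so $f(L^\prime/L_1)=1$. Then $18=2\cdot 4+d(M/L^\prime)$, giving $d(M/L^\prime)=10$.
\item For $L_1K$: it is also ramified over $L_1$, so $18=2\cdot 7+d(M/L_1K)$, giving $d(M/L_1K)=4$.
\end{itemize}

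Every step is a direct application of earlier results, so I expect no step to be a serious obstacle. The only point needing care is checking the hypotheses of \cref{lem:disc}: that $L^\prime\neq L_1K$ and that the residue degrees $f$ equal $1$. Both follow from the discriminant values above.
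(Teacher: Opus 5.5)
Your proposal is correct and follows the paper's proof, which says only that it is ``verbatim the previous lemma''. That earlier proof uses the same steps you give: the tame values $d(L_1/F)=d(L_1K/K)=2$, the tower identity giving $d(L_1K/L_1)=3d(K/F)-2=7$, the equality case of \cref{lem:disc}(4) giving $d(L_0/L_1)=7$, and then \cref{lem:disc}(2) and (3) for the remaining entries. Your explicit checks that $L^\prime\neq L_1K$ and that the relevant residue degrees equal $1$ are the hypotheses the paper leaves implicit.
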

\begin{proof}
    This is verbatim the previous lemma. 
\end{proof}
\begin{remark}
    When $d(K/F)=2$ we get that $d(L_1K/L_1)=4.$ Hence it is possible that $L_1K\cong L^\prime.$ When they are not isomorphic, the third extension $L_0$ could have any discriminant less than  $4.$
\end{remark}
We also have a similar lemma when $L_2/L$ is unramified.
\begin{lemma}
When $d(L^\prime/L_2)=2$ we have 

 \begin{tabular}{|c|c|c|c|c|c|}
\hline
$d(K/F)$ & $d(L_2K/L_2)$ 
& $d(L_0/L_2)$ 
& $d(L^\prime\cdot L_2K/L_2)$ 
& $d(L^\prime\cdot L_2K/L^\prime)$ 
& $d(L^\prime \cdot L_2K/L_2K)$ \\
\hline
$2$ & $2$ & $\leq 2$ & $$ & $$ & $$ \\

\hline
$3$ & $3$ & $3$ & $8$ & $4$ & $2$ \\
\hline
\end{tabular}  
\end{lemma}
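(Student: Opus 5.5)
We prove the lemma the same way as the two lemmas for $L_1$: apply \cref{lem:disc} and the tower formula for discriminants in the field diagram above, now with $L=L_2$ unramified over $F$.

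\textbf{Step 1: the column $d(L_2K/L_2)$.} Since $L_2/F$ is unramified, $d(L_2/F)=0$ and $f(L_2/F)=3$. The extension $L_2K/K$ is also unramified, so $d(L_2K/K)=0$. Computing $d(L_2K/F)$ in the two towers through $L_2$ and through $K$ gives
$$d(L_2K/L_2)+0=0+3\,d(K/F).$$
Here discriminants are measured as valuations in the residue-normalized sense used in the preceding lemmas. An unramified base extension does not change the discriminant exponent, since the uniformizer is unchanged. So $d(L_2K/L_2)=d(K/F)$, which gives the entries $2$ and $3$.

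\textbf{Step 2: the column $d(L_0/L_2)$.} Here $L_0$ is the third quadratic subextension of $L'\cdot L_2K/L_2$, and we use \cref{lem:disc}(4).
\begin{itemize}
\item If $d(K/F)=3$, then $d(L'/L_2)=2\neq 3=d(L_2K/L_2)$. Equality holds in (4), so $d(L_0/L_2)=3$.
\item If $d(K/F)=2$, the two discriminants coincide, and (4) only gives $d(L_0/L_2)\le 2$.
\end{itemize}
Note that (5) does not apply, because $L_2$ is not totally ramified over $\mathbb{Q}_2$. In the second case $L_2K$ may even be isomorphic to $L'$, which is why the remaining entries in that row are left blank, exactly as in the remark after the $L_1$ lemma.

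\textbf{Step 3: the remaining columns when $d(K/F)=3$.} By \cref{lem:disc}(2),
$$d(L'\cdot L_2K/L_2)=2+3+3=8.$$
Then \cref{lem:disc}(3) gives the two relative discriminants. All extensions involved are ramified quadratic, so $f=1$.
\begin{itemize}
\item Through $L'$: $8=2\cdot 2+d(L'\cdot L_2K/L')$, so $d(L'\cdot L_2K/L')=4$.
\item Through $L_2K$: $8=2\cdot 3+d(L'\cdot L_2K/L_2K)$, so $d(L'\cdot L_2K/L_2K)=2$.
\end{itemize}

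\textbf{Main obstacle.} The only delicate point is the normalization in Step 1. We must be sure the tower formula is applied with $f(L_2/F)=3$, so that $d(L_2K/F)=3d(K/F)$ and not $d(K/F)$, and that this is consistent with the relative exponent $d(L_2K/L_2)$ used in \cref{lem:disc}. Checking it via norms of unit filtrations is direct: $U^n_{L_2}$ maps onto $U^n_F$ under the unramified norm, so the largest $n$ with $U^n$ inside the norm group is preserved. This confirms $d(L_2K/L_2)=d(K/F)$ by the class-field-theoretic characterization of $d$ recalled at the start of Section 4.
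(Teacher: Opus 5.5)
Your proposal is correct and follows the route the paper intends: the paper gives no separate proof here, only that the lemma is similar to the $L_1$ case, i.e.\ the tower formula for $d(L_2K/L_2)$ followed by \cref{lem:disc}(4), (2) and (3), exactly as you do. One slip: the displayed identity in Step 1 should read $3\,d(L_2K/L_2)=3\,d(K/F)$, because $f(L_2/F)=3$ multiplies $d(L_2K/L_2)$ in the tower through $L_2$, and as written it would give $d(L_2K/L_2)=3d(K/F)$. Your closing norm-filtration argument still correctly establishes $d(L_2K/L_2)=d(K/F)$, since the conductor of $w_{K/F}\circ N_{L_2/F}$ equals that of $w_{K/F}$ when $N_{L_2/F}(U^n_{L_2})=U^n_F$ for all $n$.
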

\begin{theorem}
Let $\pi$ be a minimal exceptional supercuspidal representation of $GL(2,\mathbb{Q}_2).$ Let $K$ be a ramified quadratic extension of $\mathbb{Q}_2.$ Then$$a(\pi_K)=
\begin{cases}
    a(\pi) & a(\pi)<2d, \\
    2(a(\pi)-d(K/F)) &\text{otherwise.}
\end{cases}$$
\end{theorem}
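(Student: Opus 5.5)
The plan is to compute $a(\pi_K)$ indirectly, by passing to a cubic extension over which everything becomes dihedral and then descending along a tamely ramified step. Let $L$ be the cubic extension attached to $\pi$ by \cref{def:exceptional}. In the octahedral case (conductor $3$ or $7$) this is $L=L_1$, where $\pi_{L_1}$ is dihedral. In the tetrahedral case (conductor $5$) it is $L=L_2$, which is unramified of degree $3$. Since base change is transitive, $a(\pi_{LK})=a((\pi_K)_{LK})$.

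First, compute $a(\pi_K)$ from $a((\pi_K)_{LK})$.
\begin{itemize}
\item If $L=L_2$, the extension $LK/K$ is unramified, so \cref{cor:unbase} gives $a(\pi_K)=a(\pi_{L_2K})$.
\item If $L=L_1$, the extension $L_1K/K$ is totally tamely ramified of degree $3$. Since $\pi_K$ is still exceptional, its Weil--Deligne parameter is irreducible, and \cref{thm:tamewd} with $e=3$, $n=2$ gives $a(\pi_{L_1K})=3a(\pi_K)-4$.
\end{itemize}
Applying the same relations over $F$ gives $a(\pi_{L_1})=3a(\pi)-4$ and $a(\pi_{L_2})=a(\pi)$, so $a(\pi_L)$ is known from $a(\pi)$.

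Second, compute $a(\pi_{LK})$ as a dihedral computation over $L$. Write $\pi_L=\pi(L',\varkappa)$, so that $a(\varkappa)=a(\pi_L)-d(L'/L)$ in the ramified case. The representation $\pi_{LK}$ is the base change of $\pi(L',\varkappa)$ to the quadratic extension $LK/L$, namely $\pi(L'\cdot LK,\varkappa_{L'\cdot LK/L'})$ when $LK\not\cong L'$. Its conductor is
\[
a\bigl(\varkappa_{L'\cdot LK/L'}\bigr)+d\bigl(L'\cdot LK/LK\bigr).
\]
The two discriminants $d(L'\cdot LK/L')$ and $d(L'\cdot LK/LK)$ are read off from the three lemmas above. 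The character conductor $a(\varkappa_{L'\cdot LK/L'})$ is computed by the analogue of \cref{Characters} over $L'$. This analogue holds for the relevant range: the case $a(\varkappa)>d$ holds for general dyadic fields by the remark after \cref{Characters}, and for $a(\varkappa)<d$ the same index-two argument works because $L'$ is totally ramified over $\mathbb{Q}_2$ when $L=L_1$.

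The casework is over the minimal conductors $a(\pi)=3,5,7$ and $d=d(K/F)\in\{2,3\}$. The tame-descent identities give the targets: over $L_1$ the expected values are $3a(\pi)-4$ when $a(\pi)<2d$ and $3\cdot 2(a(\pi)-d)-4$ otherwise, and over $L_2$ the targets coincide with the $F$-values. Two examples:
\begin{itemize}
\item $a(\pi)=3$, octahedral: $a(\pi_{L_1})=5$ and $d(L'/L_1)=2$, so $a(\varkappa)=3$.
\begin{itemize}
\item For $d=3$: $d(L'\cdot L_1K/L')=12>3$, so the character conductor stays $3$, and $a(\pi_{L_1K})=3+2=5$. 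This gives $a(\pi_K)=3$.
\item For $d=2$: $d(L'\cdot L_1K/L')=6>3$, giving again $5$ and hence $a(\pi_K)=3<4$. This matches the claim since $3<2d$ in both subcases.
\end{itemize}
\item $a(\pi)=7$: $a(\pi_{L_1})=17$, with $d(L'/L_1)$ equal to $2$ or $4$. Here $a(\varkappa)$ exceeds the relevant discriminant, so the formula $2a(\varkappa)-d$ applies. Adding $d(L'\cdot L_1K/L_1K)$ and descending should yield $2(7-d)$.
\end{itemize}
The tetrahedral case $a(\pi)=5$ is handled the same way using the $L_2$ lemma and \cref{cor:unbase}.

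The main obstacle is the set of borderline cases, where $a(\varkappa)$ equals the discriminant of $L'\cdot LK/L'$, or where $LK\cong L'$. The latter can happen when $d=2$ by the remark after the lemmas, and in the $L_2$, $d=2$ row of the table, which is left incomplete. When $L_1K\cong L'$, base change gives $\pi_{L_1K}$ principal series. That would contradict $\pi_K$ being exceptional with irreducible restriction to the index-$3$ subgroup $W_{L_1K}$, so I would first rule this configuration out. For equal-conductor cases I would fall back on the counting argument of \cref{Prop:char} and \cref{rmk:conductor}, and pin down the answer by requiring that $a(\pi_K)$ be an integer consistent with $3a(\pi_K)-4$. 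This integrality constraint on the descent is what I expect to carry the argument through the ambiguous cases.
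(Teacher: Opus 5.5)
Your strategy is the paper's own, and your $a(\pi)=3$ computation reproduces the paper's table. The strategy is to pass to the cubic field $L$ ($L_1$ octahedral, $L_2$ tetrahedral), descend via $a(\pi_{L_1K})=3a(\pi_K)-4$ or $a(\pi_{L_2K})=a(\pi_K)$, and compute $a(\pi_{LK})$ as a dihedral base change along $LK/L$. For $a(\pi)=3$ you should also invoke $a(\varkappa)\ge d(L'/L_1)$ to discard $(a(\varkappa),d(L'/L_1))=(1,4)$.

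The gap is that the other cases are not carried out, and the sketches contain steps that fail as stated.
\begin{itemize}
\item For $a(\pi)=7$ you restrict to $d(L'/L_1)\in\{2,4\}$ without justification. Quadratic extensions of $L_1$ have discriminant $2,4,6$ or $7$. So $17=a(\varkappa)+d(L'/L_1)$ with $a(\varkappa)\ge d(L'/L_1)$ leaves $(15,2),(13,4),(11,6),(10,7)$, which is the paper's list.
\item In the configurations $(13,4)$ with $d(K/F)=2$ and $(10,7)$ with $d(K/F)=3$, one has $d(L'/L_1)=d(L_1K/L_1)$. So $d(L_0/L_1)$ is undetermined and no lemma table applies.
\item For $a(\pi)=5$, ``the same way'' hides the $d(K/F)=2$ row, which you yourself leave open.
\item It also hides the $d(K/F)=3$ step, where $a(\varkappa)=3<4=d(L'\cdot L_2K/L')$. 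Here $L'$ has residue field $\mathbb{F}_8$, so your index-two justification (stated only for $L'$ totally ramified over $\mathbb{Q}_2$) does not apply. The paper uses Serre's norm propositions instead: on $U^2/U^3$ the norm induces $\xi\mapsto\xi^2$, which is onto.
\item Your fallback for ambiguous cases, integrality in $3a(\pi_K)-4$, is only a consistency check and cannot determine a conductor. It is also unavailable in the tetrahedral case, where the descent is an equality.
\end{itemize}

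What actually closes every case is a cancellation from the tower formula. Since $d(L'\cdot LK/L)=2d(L'/L)+d(L'\cdot LK/L')=2d(LK/L)+d(L'\cdot LK/LK)$, whenever $a(\varkappa)>d(L'\cdot LK/L')$ you get
\[
a(\pi_{LK})=2a(\varkappa)-d(L'\cdot LK/L')+d(L'\cdot LK/LK)=2a(\pi_L)-2d(LK/L).
\]
This is independent of the unknown $d(L_0/L)$, and it also holds if $L'\cdot LK/LK$ is unramified, since then $d(L'/L)=d(LK/L)$. With $a(\pi_{L_1})=3a(\pi)-4$ and $d(L_1K/L_1)=3d-2$ (resp.\ $a(\pi_{L_2})=a(\pi)$ and $d(L_2K/L_2)=d$), this gives $a(\pi_K)=2(a(\pi)-d)$.

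So what must be verified is this. For $a(\pi)=5$ with $d=2$, and for all four $a(\pi)=7$ configurations, $a(\varkappa)$ must strictly exceed $d(L'\cdot LK/L')$ for every admissible $d(L_0/L)$. It does; for example, with $L_2$ and $d=2$ this discriminant equals $d(L_0/L_2)\le 2<3$. For $a(\pi)<2d$ one has $a(\varkappa)<d(L'\cdot LK/L')$ and $d(L'/L)<d(LK/L)$, which gives $a(\pi_{LK})=a(\pi_L)$.

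Your exclusion of $LK\cong L'$ is correct: the image of $W_{LK}$ in the projective image $A_4$ or $S_4$ has index at most $3$, hence is non-cyclic. But that is not where the difficulty lies.
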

\begin{proof}
    The proof is roughly the same for all the cases. We assume $L$ is the cubic extension of $F$ such that $\pi_L\cong \text{Ind}_{L^\prime}^L(\varkappa)$ is imprimitive. We do $a(\pi)=3$ and $a(\pi)=5$ and leave $a(\pi)=7$ to the reader. Let $a(\pi)=3,$ then $a(\pi_{L_1})=3a(\pi)-4=5=a(\varkappa)+d(L^\prime/L_1)$ by the tamely ramified formula. Then either one of the following holds
    \begin{enumerate} 
        \item $a(\varkappa)=3,d(L^\prime/L_1)=2;$ 
        \item $a(\varkappa)=1,d(L^\prime/L_1)=4.$
    \end{enumerate}
The second case cannot happen since  $a(\varkappa)< d(L^\prime/L_1).$ By our discussion above, $\pi_{L_1K}=\pi({L^\prime\cdot L_1K},\varkappa_{L^\prime\cdot L_1K}).$ Thus $a(\varkappa)<d(L^\prime\cdot L_1K/L^\prime)$ in both cases and thus $a(\varkappa_{L^\prime\cdot L_1K})=a(\varkappa)=3$ independently of $K.$ Thus $a(\pi_{L_1K})=5.$ Now $a(\pi_{L_1K})=3a(\pi_K)-4,$ hence we get $a(\pi_K)=3.$ This is summarized below.
\begin{center}
    \small
    \begin{tabular}{|c|c|c|c|c|}
    \hline
    $d(K/F)$ & $a(\varkappa)=a(\varkappa_{L^\prime L_1K})$
    & $d(L^\prime L_1K/L_1K)$ & $a(\pi_{L_1K})$ & $a(\pi_K)$ \\
    \hline
    $2$ & $3$ & $2$ & $5$ & $3$ \\
    \hline
    $3$ & $3$ & $2$ & $5$ & $3$ \\
    \hline
    \end{tabular}
\end{center}
Now when $a(\pi)=5,$ we get $a(\pi_{L_2})=5$ since $L_2/F$ is unramified. Then, for a given $\pi,$ one of the following holds: \begin{enumerate}
    \item $a(\varkappa)=3,d(L^\prime/L_2)=2;$
        \item $a(\varkappa)=2,d(L^\prime/L_2)=3.$
        \end{enumerate}

   Clearly $(2)$ is not possible since $a(\varkappa)>d(L^\prime/L_2).$ Thus $a(\varkappa)=3$ and $d(L^\prime /L_2)=2.$ Since $L_2K/K$ is unramified, $a(\pi_{{L_2}K})=a(\pi_K).$ When $d(K/F)=2,$  we have three possibilities. \begin{enumerate}[label=(\alph*)]
        \item $L^\prime\cong {L_2K}:$\newline Now $\pi_{L_2K}$ is a principal series representation and hence by our discussion previously we have $a(\pi_{L_2K})=2a(\varkappa)=6.$ Thus we get $a(\pi_K)=6.$
        \item $L_0$ is unramified:\newline
        In this case both $L^\prime\cdot L_2K/L_2$ and $L^\prime\cdot L_2K/L_2K$ are unramified. Thus $a(\pi_{L_2K})=2a(\varkappa_{L^\prime\cdot L_2K})=2a(\varkappa)=6.$ Thus we get $a(\pi_{K})=6.$
        \item $d(L_{0}/L_2)=2:$\newline Thus $d(L^\prime\cdot L_2K/L^\prime)=d(L^\prime\cdot L_2K/L_2K)=2.$ Thus $a(\varkappa)>d(L^\prime\cdot L_2K/L_2)$ and hence $a(\varkappa_{L^\prime\cdot L_2K})=2a(\varkappa)-d(L^\prime\cdot L_2K/L^\prime)=2*3-2=4.$
        Thus $a(\pi_{L_2K})=a(\varkappa_{L^\prime\cdot L_2K})+d(L^\prime\cdot L_2K/L_2K)=4+2=6.$ Thus $a(\pi_K)=6.$\end{enumerate}       
When $d(K/F)=3,$  we have $d(L^\prime\cdot L_2K/L^\prime)=4$ and $a(\varkappa)<d(L^\prime\cdot L_2K/L^\prime).$ Now we use \cite{MR554237} Chapter 5 Propositions 4 and 5 for the extension $L^\prime\cdot L_2K/L^\prime.$ Here $t=d(L^\prime\cdot L_2K/L^\prime)-1=3.$ By Proposition 4, $N(U_{L^\prime\cdot L_2K}^3)\subset U_{L^\prime}^3.$ By Proposition $5(ii),N(U_{L^\prime\cdot L_2K}^2) \not \subset  U_{L^\prime}^3.$Thus $a(\varkappa\circ N_{L^\prime\cdot L_2K/L^\prime})=3.$ Hence $a(\pi_K)=a(\pi_{{L_2}K})=a(\varkappa\circ N_{L^\prime\cdot L_2K/L^\prime})+d(L^\prime\cdot L_2K/L_2K)=3+2=5.$

When $a(\pi)=7,$ we get $a(\pi_{L_1})=17$ and hence for a given $\pi$ one of the following holds: \begin{enumerate} 
        \item $a(\varkappa)=15,d(L^\prime/L_1)=2$
        \item $a(\varkappa)=13,d(L^\prime/L_1)=4$
        \item $a(\varkappa)=11,d(L^\prime/L_1)=6$
        \item $a(\varkappa)=10,d(L^\prime/L_1)=7$
    \end{enumerate}
     A case-by-case analysis as above gives the result for $a(\pi_K).$ 
\end{proof}
\begin{remark}
    The case $a(\pi)=3$ of the previous result is also stated in \cite{MR721997} Proposition 3.9.
\end{remark}

Now let us study the base change of non-minimal exceptional representations.
\begin{theorem}
    Let $\pi$ be an  exceptional supercuspidal representation of $GL(2,\mathbb{Q}_2).$ Let $\chi$ be a non-trivial character of $F^\times$ such that $\pi \cong \pi_0 \otimes \chi$ where $\pi_0$ is a minimal supercuspidal representation ; note that $\pi$ itself may already be  minimal.
    \begin{enumerate}
    \item Assume that $a(\pi)\neq 2d(K/F).$Then,
    $$a(\pi_K)=\begin{cases}
        a(\pi)&a(\pi)<2d(K/F),\\
        2(a(\pi)-d(K/F))&a(\pi)>2d(K/F)
    \end{cases}$$
        \item Assume that when $d(K/F)=2,a(\pi)=4. $
     Then $$a(\pi_K)=
      3$$
    \item Let $a(\pi)=6 \text{ and } d(K/F)=3.$ Then $$a(\pi_K)=\begin{cases}
        
      5&a(\pi_0)=5,\\ 
      3& a(\pi_0)=3 \text{ and }\chi=w_{K/F} \text{ on } o^\times_F,\\
      4& a(\pi_0)=3 \text{ and }\chi\neq w_{K/F} \text{ on } o^\times_F.\\
    \end{cases}$$
\end{enumerate}
\end{theorem}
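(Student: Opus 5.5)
Write $\pi\cong\pi_0\otimes\chi$ with $\pi_0$ minimal exceptional, so $a(\pi_0)\in\{3,5,7\}$. By part (4) of the base change theorem of Section 2, $\pi_K\cong(\pi_0)_K\otimes\chi_K$, so the task is to compute the conductor of a twist of the known representation $(\pi_0)_K$. Its conductor was computed in the previous theorem, and $a(\chi_K)$ is given by \cref{Characters} and \cref{Prop:char}. The tool is the standard twisting rule for supercuspidals of $GL(2)$: if $a(\chi)$ exceeds half the conductor, then $a(\pi_0\otimes\chi)=2a(\chi)$; if $2a(\chi)<a(\pi_0)$, the twist does not change the conductor. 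I would first determine $a(\pi)$ from $(a(\pi_0),a(\chi))$ over $F=\mathbb{Q}_2$, noting that minimality means $a(\pi)\ge a(\pi_0)$, and that non-minimal $\pi$ have even conductor $2a(\chi)$ with $2a(\chi)>a(\pi_0)$. The same rule would then be applied over $K$ to $(\pi_0)_K\otimes\chi_K$.

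\textbf{Part (1), $a(\pi)\ne 2d$.} If $\pi$ is minimal, the previous theorem already gives the answer. Otherwise $a(\pi)=2a(\chi)$.

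If $a(\pi)>2d$, then $a(\chi)>d$, so \cref{Characters} gives $a(\chi_K)=2a(\chi)-d$. The previous theorem gives $a((\pi_0)_K)\le 2(a(\pi_0)-d)$ or $a(\pi_0)$. Comparing, $2a(\chi_K)=4a(\chi)-2d$ dominates, so $a(\pi_K)=2(a(\pi)-d)$.

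If $a(\pi)<2d$, then $a(\chi)<d$, so $a(\chi_K)=a(\chi)$ and $a((\pi_0)_K)=a(\pi_0)$. The dominance persists, giving $a(\pi_K)=2a(\chi)=a(\pi)$. Over $\mathbb{Q}_2$ with $d\le 3$ the only such case is $a(\pi_0)=3$, $a(\chi)=2$, and there $2a(\chi)>3$ holds, so the argument goes through.

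\textbf{Part (2), $d=2$, $a(\pi)=4$.} Here $a(\chi)=2=d$, so by \cref{Prop:char} $a(\chi_K)=0$: the unique conductor-$2$ character up to unramified twist is trivial on norms from units. Hence $\pi_K$ is an unramified twist of $(\pi_0)_K$, and $a(\pi_K)=a((\pi_0)_K)=3$ by the previous theorem, since $a(\pi_0)=3<4$.

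\textbf{Part (3), $d=3$, $a(\pi)=6$.} Here $a(\chi)=3=d$, and there are two subcases for $\pi_0$.

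If $a(\pi_0)=5$, then $(\pi_0)_K$ has conductor $5$. By \cref{Prop:char}, $a(\chi_K)\in\{0,2\}$, and $2a(\chi_K)\le 4<5$. So the twist does not change the conductor, giving $5$.

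If $a(\pi_0)=3$, then $(\pi_0)_K$ has conductor $3$. When $\chi=w_{K/F}$ on units, $a(\chi_K)=0$ and the answer is $3$. Otherwise $a(\chi_K)=2$, and $2a(\chi_K)=4>3$. The twisting rule, which holds for minimal $(\pi_0)_K$, then gives $a(\pi_K)=4$.

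\textbf{Main obstacle.} The delicate point is justifying the twisting rule in boundary situations. In particular it must apply to $(\pi_0)_K$, which requires knowing that $(\pi_0)_K$ is minimal over $K$. It is exceptional, hence has no dihedral twist-equivalences, but minimality must still be checked. I would establish minimality through the conductor-$3$ bound: a non-minimal representation has even conductor, and conductor $3$ is odd. For the conductor-$5$ case I would use the same parity argument together with the fact that conductors of minimal exceptional representations over $K$ are odd. Alternatively, both cases can be checked via the tame tower trick of the previous theorem, passing to $LK$ and computing with the dihedral tables.
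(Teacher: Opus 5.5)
Your proposal is correct and follows essentially the same route as the paper: write $\pi_K\cong(\pi_0)_K\otimes\chi_K$, take $a((\pi_0)_K)$ from the minimal case and $a(\chi_K)$ from the character computations of Section 4, and apply the twisting bound $a(\pi\otimes\chi)\le\max\{a(\pi),2a(\chi)\}$ (Tunnell, Prop.~3.4). Your explicit two-subcase comparison for $a(\pi)>2d$ is slightly more careful than the paper's version. Your ``main obstacle'' does not actually arise: every comparison you make is strict (for instance $4>3$ and $4<5$), and the twisting rule gives equality whenever $a(\pi)\neq 2a(\chi)$ without any minimality hypothesis, so you never need to know that $(\pi_0)_K$ is minimal.
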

\begin{proof}
The case where $\pi$ is minimal follows from the previous theorem.
   We know that $(\pi \otimes \chi)_K=\pi_K\otimes \chi_K.$ Using \cite{MR476703} Proposition 3.4, we see that for any representation $\pi,$ $a(\pi\otimes \chi)\leq \max\{a(\pi),2a(\chi)\}$ with equality if $\pi$ is minimal or $a(\pi)\neq 2a(\chi).$ 
       Let us look at the case where $a(\pi)=4.$ Then we have $a(\pi_0)=3$ and $a(\chi)=2.$ When $d(K/F)=2,$ then $a(\chi)_K=0$ hence $a(\pi_K)=a((\pi_0)_K)=3.$ When  $d(K/F)=3, a(\chi_K)=2$ and hence $a(\pi_K)=4.$

       Now when $a(\pi)=6,$ we have either $a(\chi)=3$ and $a(\pi_0)=3 \text{ or } 5.$ When $a(\pi_0)=3$ and$\chi=w_{K/F} \text{ on } o^\times_F,$ clearly $\chi_K$ is unramified and hence $a(\pi_K)=a((\pi_0)_K)=a(\pi_0).$ When  $a(\pi_0)=3,a(\chi)=3 \text{ and }  \chi\neq w_{K/F} \text{ on } o^\times_F, \chi=\chi_2w_{K/F},$ where $\chi_2$ is a character of conductor $2.$ Thus $a((\pi_0\otimes \chi)_K)=a((\pi_0\otimes \chi_2)_K),$ which 
       is $4.$ When $a(\pi_0)=5,$$a(\chi_K)=0 \text{ or 2}.$ In either way $a((\pi_0)_K)>2a(\chi_K)$ and hence $a(\pi_K)=a((\pi_0)_K)=5.$  

       In all the other cases $a(\chi)>d(K/F).$   Now applying this to our case, we have $a((\pi_0\otimes \chi)_K)\leq \max\{2(a(\pi_0)-d),2a(\chi_K)\},$ with equality only if $2(a(\pi_0)-d)\neq 2a(\chi_K).$ But if they were equal, it would imply $a(\pi_0)=a(\chi_K)+d.$ This means that  $a(\pi_0)=2a(\chi).$ This cannot happen since $a(\pi_0)$ is odd. 
    
\end{proof}

The results of this section are summarized as follows in \cref{tab:dyadic-exceptional}. 
\begin{table}
    \centering
    \begin{tabular}{|c|c|}
    \hline
        Conditions &$a(\pi_K)$ \\\hline
        
         $a(\pi)<2d(K/F)$  &$a(\pi)$ \\ \hline
         $a(\pi)>2d(K/F)$  &$2(a(\pi)-d(K/F))$ \\ \hline
        
         $a(\pi)=4$, $d(K/F)=2$  &$3$ \\ \hline
         $\pi\cong \pi_0\otimes\chi, a(\pi)=6 , a(\pi_0)=5 , d(K/F)=3$  &$5$ \\ \hline
         $\pi\cong \pi_0\otimes\chi, a(\pi)=6, a(\pi_0)=3 ,  d(K/F)=3, \chi=w_{K/F} \text{ on }\mathfrak{o}^\times_F$  &$3$ \\ \hline
         $\pi\cong \pi_0\otimes\chi, a(\pi)=6,a(\pi_0)=3, d(K/F)=3, \chi\neq w_{K/F} \text{ on }\mathfrak{o}^\times_F$  &$4$ \\ \hline
       
    \end{tabular}
    \caption{Summary of results when $F=\mathbb{Q}_2$, $\pi$ is an exceptional representation, $K/F$ is a ramified quadratic extension}
    \label{tab:dyadic-exceptional}
\end{table}
\paragraph{We now provide an explicit description of the base change of representations with $a(\pi)=3.$ By
\cite[Thm 5.1]{MR476703}  when residue cardinality of $K$ is 2 there exists only one orbit of exceptional supercuspidal representation of $GL(2,K)$ with $a(\pi)=3.$ Furthermore, if $\pi$ is assumed to have trivial central character, then there exist precisely two such representations, which are unramified twists of each other. It can easily be seen that the local root number $\varepsilon(\pi)$ are of opposite sign for these representations(Since $\pi$ has trivial central character, the local root number does not depend on choice of the additive character.) Starting with an exceptional supercuspidal representation $\pi$ of $GL(2,\mathbb{Q}_2)$ with trivial central character and $a(\pi)=3,$ we determine $\pi_K$ for each totally ramified quadratic extension $K$ of $\mathbb{Q}_2,$ in terms of its $\varepsilon$-factors}

\begin{theorem}
Let $\pi$ be an exceptional supercuspidal representation of $GL(2,\mathbb{Q}_2)$ with trivial central character and $a(\pi)=3.$ For a ramified quadratic extension $K$ of $\mathbb{Q}_2,$ $\pi_K$ is the exceptional supercuspidal representation of $GL(2,K)$ with $a(\pi_K)=3$ and $$\varepsilon(\pi_K)=\begin{cases}
    -\varepsilon(\pi)& \text{ if } d(K/\mathbb{Q}_2)=2,\\
     \varepsilon(\pi)& \text{ if } d(K/\mathbb{Q}_2)=3.
\end{cases}$$
\begin{proof}
Let $w_{K/\mathbb{Q}_2}$ denote the non trivial character of $\mathbb{Q}_2^\times$ trivial on norms from $K^\times.$
    By \cite[Prop 6.9]{MR1007299}, taking $\tau$ to be the trivial character of $\mathbb{Q}_2^\times,$ we get
    $$\varepsilon(\pi_K)=\lambda_{K/\mathbb{Q}_2}^{-2}.\varepsilon(\pi).\varepsilon(\pi\otimes w_{K/\mathbb{Q}_2})$$ 
    By \cite{MR2234120} (30.4.3) $\lambda_{K/\mathbb{Q}_2}^{-2}=w_{K/\mathbb{Q}_2}(-1).$ Using \cite{MR721997} Lemma 3.8 we have $$\varepsilon(\pi\otimes w_{K/\mathbb{Q}_2})=\begin{cases}
        -w_{K/\mathbb{Q}_2}(-1)& d{(K/\mathbb{Q}_2)}=2,\\
        w_{K/\mathbb{Q}_2}(-1)& d{(K/\mathbb{Q}_2)}=3.\\
    \end{cases}$$
    Putting all of these together we get what we want.
\end{proof}     
\end{theorem}
\begin{remark}
We note that the assertion in \cite{MR4269428} Remark 3.7 concerning sporadic supercuspidal representations of level $2^6$ does not hold in general by the above mentioned lemma in \cite{MR721997}.

\end{remark}
\section{Global conductors}
In this section we address our original problem: how the level of a modular form change under base change? Let $f$ be a normalised Hecke eigenform of weight $\mathbf{k}$ and exact level $N$ over $F.$ There is a well-established theory of viewing $f$ as an automorphic representation $\pi_f=\bigotimes_\mathfrak{p} \pi_\mathfrak{p}$ of $GL_2(\mathbb{A}_{F})$ as described in standard texts such as \cite{MR401654}. This correspondence follows $N=\prod_\mathfrak{p} p^{c(\pi_\mathfrak{p})}.$  In \cite{MR2869056} the authors describe an algorithm to obtain the representation $\pi_p$ from $f,$ when $F=\mathbb{Q}.$ Thus, for a quadratic extension $K/F,$ if we denote by $f_K$ to be the base change of $f$ to $K,$ then $\text{level}(f_K)=\prod_\mathfrak{P} \mathfrak{P}^{c(\pi_{p_K})}.$ Thus our work in the previous sections should be enough to give the level of the base change once we explicitly know the local components. We make the following assumptions for a cleaner result. When dyadic primes are ramified in $F$ or divide $N,$ we assume that they are either (1) totally ramified, or (2) split completely in $F.$ This ensures that for any dyadic prime $\mathfrak{p}_{0}$ of $F,$ we have $F_\mathfrak{p_{0}}\cong \mathbb{Q}_2.$
\subsection*{Hypothesis}
\begin{enumerate}
    \item [\textbf{(H1)}] If $\pi_\mathfrak{p}$ is a principal series  representation, then we assume $\pi_p$ has trivial central character.
\newline The next 3 hypotheses are local conditions at dyadic primes $\mathfrak{p}_{0}.$
    \item [\textbf{(H2)}] When $\pi_{\mathfrak{p}_{0}}$ is a supercuspidal representation induced from the unramified quadratic extension of $\mathbb{Q}_2,$ $a(\pi)> 2d(K/F).$
    \item [\textbf{(H3)}] When $\pi_{\mathfrak{p}_{0}}$ is a supercuspidal representation induced from a quadratic extension $K$ of $\mathbb{Q}_2$ with discriminant $2,$ and $\text{if val}_2(d(K/\mathbb{Q}_2))=3,$ we assume $a(\pi_2)\neq 6.$
\end{enumerate}
As we remarked in the introduction, let us first deal with the case when dyadic primes are unramified in $K$ or dyadic primes do not divide $N.$ Let $K/F$ be a  quadratic extension of totally real number fields, with discriminant $D.$Let
$$\mathcal{R}=\{\mathfrak{p} \in \mathfrak{o}_F; \mathfrak{p}\mid D \text{ and } \mathfrak{p}\mid N\}$$
$$\mathcal{S}=\{\mathfrak{p} \in \mathfrak{o}_F; \mathfrak{p}\nmid D \text{ and } \mathfrak{p}\mid N\}$$
Let $\mathcal{R^\prime}\subset \mathcal{R}$ be such that $(\pi_{\mathfrak{p}})_{K_{\mathfrak{P}}}$ is not unramified for any $\mathfrak{P}$ lying over $\mathfrak{p}.$
Let us first prove Theorem 1.1.
\begin{proof}[Proof of Theorem \ref{thm:1.1}] Clearly when $v_{p}(N)=0,$ $\pi_p$ is unramified. Thus $(\pi_p)_{K_\mathfrak{P}}$ is also unramfied and hence $v_\mathfrak{P}(\mathfrak{N})=0$ for $\mathfrak{P}$ dividing $\mathfrak{p}.$ Now when 
 $\mathfrak{p}\in \mathcal{S},$by Corollary \ref{cor:unbase}, we have $v_\mathfrak{P}(\mathfrak{N})=v_\mathfrak{p}(N).$ Let $\mathfrak{p}$ be a dyadic prime. By our assumption on dyadic primes  we get that $\mathfrak{p}$does not lie in $\mathcal{R}.$ Thus, for any dyadic primes we get $v_\mathfrak{P}(\mathfrak{N})=v_\mathfrak{p}(N).$ Now if $\mathfrak{p}$ is in $\mathcal{R},$ we assume $v_{p}(N)>2$ or $\pi_p$ is supercuspidal. In either of these cases, we get $v_{\mathfrak{P}}(\mathfrak{N})=2(v_{p}(N)-1)$ for any $\mathfrak{P}$ dividing p, by Theorem \ref{thm:tame}.

\end{proof}
Now let us continue to the case where dyadic primes divide $N$ and are ramified in $K.$ This is a generalisation of Theorem \ref{thm:1.2}.

 Let $\mathfrak{p}$ be a prime ideal in $\mathfrak{o}_F$,$\mathfrak{P}$ be the prime ideal lying above $\mathfrak{p}.$  Let $\mathcal{R^\prime}\subset \mathcal{R}$ be such that $(\pi_{\mathfrak{p}})_{K_{\mathfrak{P}}}$ is not unramified for any $\mathfrak{P}$ lying over $\mathfrak{p}.$ Set $\mathfrak{d}_\mathfrak{p}=v_{\mathfrak{p}}(D).$ Assume that all the hypotheses above are satisfied. 
\begin{theorem}
   Let  $f, f_K,N,\mathfrak{N}$ be all as above. Then
   $$\mathfrak{N}=\prod_{\substack{\mathfrak{P}\mid \mathfrak{p}\\ \mathfrak{p}\in \mathfrak{R}^\prime}}\mathfrak{P}^{c(f,\mathfrak{p})}.\prod_{
\substack{\mathfrak{P}\mid \mathfrak{p}\\ \mathfrak{p}\in \mathcal{S}}}\mathfrak{P}^{v_\mathfrak{p}(N)}$$ where $c(f,\mathfrak{p})$ is an integer given as follows.
   \begin{enumerate}
       \item If $\mathfrak{p}$ is odd and $\pi_\mathfrak{p}$ is not a special representation $$c(f,\mathfrak{p})=2(v_{\mathfrak{p}}(N)-1).$$
       \item If $\mathfrak{p}$ is odd and $\pi_{\mathfrak{p}}$ is a special representation then $$c(f,\mathfrak{p})=\begin{cases}
           1& \text{ if }v_{\mathfrak{p}}(N)=1\\ 1& \text{ if }v_{\mathfrak{p}}(N)=2 \text{ and } \pi_\mathfrak{p} \text{ has trivial central character},\\
         2(v_\mathfrak{p}(N)-1)& \text{ otherwise.}
       \end{cases}$$
       For the following assume that $\mathfrak{p}$ is dyadic.
       \item If $v_\mathfrak{p}(N)<2\mathfrak{d_p}$ then
       $$c(f,\mathfrak{p})=v_{\mathfrak{p}}(N).$$

       \item If $v_\mathfrak{p}(N)>2\mathfrak{d_p}$ then
       $$c(f,\mathfrak{p})=2(v_{\mathfrak{p}}(N)-\mathfrak{d_p}).$$
       \newline
       For the remaining let us assume that $v_\mathfrak{p}(N)=2\mathfrak{d_p}.$\newline
       \item If  $\pi_{\mathfrak{p}}$ is a principal series representation $B(\chi,\chi^{-1})$ or a special representation $\text{St}\otimes \chi$  then $$c(f,\mathfrak{p})=\begin{cases}
           0& \pi_{\mathfrak{p}}\text{ is principal series and }\chi=w_{K_\mathfrak{P}/F_\mathfrak{p}} \text{ on }\mathfrak{o}^\times_{F_{\mathfrak{p}}} ,\\
            1& \pi_{\mathfrak{p}}\text{ is special representation and }\chi=w_{K_\mathfrak{P}/F_\mathfrak{p}} \text{ on }\mathfrak{o}^\times_{F_{\mathfrak{p}}} ,\\
           v_{\mathfrak{p}}(N)-2& \text{ otherwise.}\\
       \end{cases}$$
       
       \item If $\pi_\mathfrak{p}=\pi(L,\varkappa)$ is a dihedral supercuspidal representation such that 
       \begin{enumerate}
        \item $L$ is not unramified
        \item When $\mathfrak{d_p}=3, d(L/F_\mathfrak{p})\neq 2.$
       \end{enumerate} Then,$$c(f,\mathfrak{p})=v_\mathfrak{p}(N)=2
       (v_\mathfrak{p}(N)-\mathfrak{d_p}).$$
             \item If  $\pi_\mathfrak{p}$ is exceptional and ${\mathfrak{d_p}}=2.$Then, $$c(f,\mathfrak{p})=
           3$$
                
       \item If $\pi_\mathfrak{p}$ is exceptional and $\mathfrak{d_p}=3$ then we have $\pi_\mathfrak{p}=\pi_0\otimes \chi$ with $a(\chi)=3.$ Then $$c(f,\mathfrak{p})=\begin{cases}
       3&\text{if }a(\pi_0)=3 \text{ and } \chi=w_{K_\mathfrak{P}/F_\mathfrak{p}} \text{ on }\mathfrak{o}^\times_{F_{\mathfrak{p}}},\\
       4& \text{if }a(\pi_0)=3 \text{ and } \chi\neq w_{K_\mathfrak{P}/F_\mathfrak{p}} \text{ on }\mathfrak{o}^\times_{F_{\mathfrak{p}}},\\ 
       5& \text{if }a(\pi_0)=5.\\ 
   \end{cases}$$
    \end{enumerate}
  \end{theorem}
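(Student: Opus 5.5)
The plan is to reduce everything to the local computations of the previous sections, exactly as in the proof of Theorem \ref{thm:1.1}. Write $\pi_f=\bigotimes_\mathfrak{p}\pi_\mathfrak{p}$, so that $v_\mathfrak{p}(N)=a(\pi_\mathfrak{p})$. By the definition of global base change, the $\mathfrak{P}$-component of $f_K$ is $(\pi_\mathfrak{p})_{K_\mathfrak{P}}$. It therefore suffices to show that $v_\mathfrak{P}(\mathfrak{N})=a((\pi_\mathfrak{p})_{K_\mathfrak{P}})$ equals the claimed exponent, one prime at a time.

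The primes are handled in three groups.
\begin{enumerate}
\item For $\mathfrak{p}\nmid N$, $\pi_\mathfrak{p}$ is unramified, hence so is its base change, and these primes contribute nothing.
\item For $\mathfrak{p}\in\mathcal{S}$, $K_\mathfrak{P}/F_\mathfrak{p}$ is unramified (or split, in which case $K_\mathfrak{P}=F_\mathfrak{p}$). Corollary \ref{cor:unbase} then gives exponent $v_\mathfrak{p}(N)$.
\item For $\mathfrak{p}\in\mathcal{R}\setminus\mathcal{R}'$, the base change is unramified by definition of $\mathcal{R}'$, so the exponent is $0$. Such primes are correctly omitted from the product.
\end{enumerate}
What remains is $\mathfrak{p}\in\mathcal{R}'$, where $K_\mathfrak{P}/F_\mathfrak{p}$ is a ramified quadratic extension and the base change is ramified.

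For odd $\mathfrak{p}$ the extension is tame with $d=1$. Items (1) and (2) are then read off from Theorem \ref{thm:tame} (equivalently Table \ref{tab:tame}), using (H1) for principal series. One point needs care: a principal series with $a(\pi)=2$ and trivial central character gives $2=2(a-1)$, so item (1) remains consistent.

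For dyadic $\mathfrak{p}$, the standing assumption gives $F_\mathfrak{p}\cong\mathbb{Q}_2$ and $\mathfrak{d}_\mathfrak{p}=d(K_\mathfrak{P}/\mathbb{Q}_2)\in\{2,3\}$. I split by the type of $\pi_\mathfrak{p}$:
\begin{itemize}
\item If $\pi_\mathfrak{p}$ is non-exceptional, the theorem of Section 4 (Table \ref{tab:dyadic}) gives items (3), (4), (5) and (6). Hypothesis (H2) rules out the excluded case of $L$ unramified with $a(\pi)\le 2d$. Hypothesis (H3) rules out the case $a(\pi)=6$, $d(L/F)=2$, $d(K/F)=3$ from Remark \ref{rmk:conductor}.
\item If $\pi_\mathfrak{p}$ is exceptional, the non-minimal theorem of Section 5 (Table \ref{tab:dyadic-exceptional}) gives items (3), (4), (7) and (8).
\end{itemize}

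In the boundary case $v_\mathfrak{p}(N)=2\mathfrak{d}_\mathfrak{p}$ one must also check that the listed subcases are exhaustive.
\begin{itemize}
\item For $\mathfrak{d}_\mathfrak{p}=2$, an exceptional $\pi_\mathfrak{p}$ has $a(\pi)=4$, so it is a twist of a minimal conductor-$3$ representation by a character of conductor $2$. This gives $3$.
\item For $\mathfrak{d}_\mathfrak{p}=3$ we have $a(\pi)=6$, which is even, so $\pi_\mathfrak{p}$ is non-minimal and equals $\pi_0\otimes\chi$. The inequality $a(\pi)\le\max\{a(\pi_0),2a(\chi)\}$, combined with $a(\pi_0)\in\{3,5,7\}$, forces $a(\chi)=3$ and $a(\pi_0)\in\{3,5\}$. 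This gives the three subcases of (8).
\end{itemize}

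The main obstacle is bookkeeping rather than new mathematics: matching each hypothesis to the exact excluded local case, and verifying that the condition ``$\chi=w_{K/F}$ on $\mathfrak{o}^\times$'' is compatible with membership in $\mathcal{R}'$. For example, the principal-series subcase with exponent $0$ is really a prime outside $\mathcal{R}'$, so item (5)'s first line is vacuous within the product. I would check each such case against Proposition \ref{Prop:char}.
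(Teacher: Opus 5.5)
Your proposal is correct and is essentially the paper's own (implicit) argument. The paper gives no separate proof: the theorem is the prime-by-prime assembly of Corollary~\ref{cor:unbase}, Theorem~\ref{thm:tame}, the Section~4 theorem and the Section~5 theorem on non-minimal exceptional representations, as in the proof of Theorem~\ref{thm:1.1}, with \textbf{(H1)}--\textbf{(H3)} excluding the local cases those results leave open.

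One small correction concerns the exhaustiveness check for $\mathfrak{d}_\mathfrak{p}\in\{2,3\}$. There you need the equality $a(\pi_0\otimes\chi)=\max\{a(\pi_0),2a(\chi)\}$, which holds because $\pi_0$ is minimal and $a(\pi_0)$ is odd. The inequality alone does not force $a(\chi)=3$ and $a(\pi_0)\in\{3,5\}$ (respectively $a(\chi)=2$ and $a(\pi_0)=3$).
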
 
\begin{remark}
    If hypothesis \textbf{(H3)} is not true,  then $c(f,\mathfrak{p})=4, 6\text{ or }7.$ See Remark \ref{rmk:conductor}.
\end{remark}
Given a modular form $f$ we have determined what the level of the base change $f_K$ can be. Now let us try to answer the reverse. Let $K/F$ be an extension of totally real fields of discriminant $D$, both of them totally split or ramified at the dyadic places. Set $\mathfrak{P}$ be a prime ideal in $\mathfrak{o}_K$ and $\mathfrak{p}=\mathfrak{P}\cap \mathfrak{o}_F.$ Let $\mathfrak{d}_\mathfrak{P}=v_{\mathfrak{p}}(D).$ Let $\mathfrak{N}=\prod_{i}\mathfrak{P}^{v_{\mathfrak{P}_i}(\mathfrak{N})}_i$ be an ideal in $\mathfrak{o}_K,$ and let $\mathfrak{F}$ be a newform of level $\mathfrak{N}.$ We determine whether there exists a modular form $f$ over $F$ of level $\mathfrak{n},$ satisfying hypotheses $\textbf{(H1)}$ through $\textbf{(H3)},$ such that $\mathfrak{F}$ is a base change lift of $f.$ Let $\mathcal{R}$ be the set of primes of $F$ ramified in $K.$ 
\begin{enumerate}
 \item Since $f_K=(f\otimes w_{K/F})_K,$ for $w_{K/F}$ the quadratic character associated to the extension via class field theory, the level of $f$ is not unique in general.
 \item If $\mathfrak{N}$ is the level of a modular form that is a base change, then $v_{\mathfrak{P}}(\mathfrak{N})=v_{\sigma(\mathfrak{P})}(\mathfrak{N}),$ for $\sigma$ the non-trivial element in $\text{Gal}(K/F).$
\end{enumerate}
The first part above says that the level $\mathfrak{n}$ is not unique in general, while the second part is a necessary condition for an ideal to be the level of a base change.
\begin{theorem}
    Let $\mathfrak{N}=\prod_{i}\mathfrak{P}_{i}^{v_{\mathfrak{P}_{i}}(\mathfrak{N})}$ be an ideal in $\mathfrak{o}_K$ such that $(2)$ above is true. Then there exists a modular form of level $\mathfrak{N}$ such that it is a base change of a modular form satisfying $\textbf{(H1)}$ through $\textbf{(H3)}$ only if the following hold for $\mathfrak P \mid \mathfrak p \in \mathcal R:$\begin{enumerate}
    
        \item $v_{\mathfrak{P}}(\mathfrak{N})$ is even when $v_{\mathfrak{P}}(\mathfrak{N})>2\mathfrak{d_P}.$
        \item When $\mathfrak{d_P}=3,v_{\mathfrak{P}}(\mathfrak{N})\neq 2.$ 
    \end{enumerate}
    Moreover the possible level $\mathfrak{n}$ of a modular form $f$ over $F$ that gives a modular form of level $\mathfrak{N}$ over $K$ is given as follows
    \begin{enumerate}
        \item If $\mathfrak{P}\mid \mathfrak{p}\notin \mathcal{R},$ then $v_{\mathfrak{P}}(\mathfrak{N})=v_{\mathfrak{p}}(\mathfrak{n}).$ \newline
        For the following assume that $\mathfrak{P}\mid \mathfrak{p}\in \mathcal{R}.$
        \item If $v_{\mathfrak{P}}(\mathfrak{N})=0 \text{ or } 1$ then $v_{\mathfrak{p}}(\mathfrak{n})=v_{\mathfrak{P}}(\mathfrak{N}) \text{ or } 2\mathfrak{d_P}.$

        \item If $v_{\mathfrak{P}}(\mathfrak{N})>2\mathfrak{d_P},$ we have $v_{\mathfrak{p}}(\mathfrak{n})=\dfrac{v_{\mathfrak{P}}(\mathfrak{N})}{2}+\mathfrak{d_P}.$\newline
        For the following assume that $\mathfrak{d_P}=2.$  
        \item If $v_{\mathfrak{P}}(\mathfrak{N})=4,$ then $v_{\mathfrak{p}}(\mathfrak{n})=2.$ 
        \item If $v_{\mathfrak{P}}(\mathfrak{N})=3,$ then $v_{\mathfrak{p}}(\mathfrak{n})=3 \text{ or } 4.$ \newline
        For the following assume that $\mathfrak{d_P}=3.$
        \item If $v_{\mathfrak{P}}(\mathfrak{N})=3,$ then  $v_{\mathfrak{p}}(\mathfrak{n})=3 \text{ or } 6.$ 
        \item If $v_{\mathfrak{P}}(\mathfrak{N})=4,$ then $v_{\mathfrak{p}}(\mathfrak{n})=4,5 \text{ or } 6.$ 
        \item If $v_{\mathfrak{P}}(\mathfrak{N})=5,$ then $v_{\mathfrak{p}}(\mathfrak{n})=5\text{ or }6.$ 
        \item If $v_{\mathfrak{P}}(\mathfrak{N})=6,$ then
 $v_{\mathfrak{p}}(\mathfrak{n})=3,4 \text{ or } 6.$         
    \end{enumerate}
    \end{theorem}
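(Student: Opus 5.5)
The plan is to invert, prime by prime, the local computations of the previous sections. Since the level of $f_K$ is $\prod_{\mathfrak P}\mathfrak P^{a((\pi_\mathfrak p)_{K_\mathfrak P})}$ and the base change is determined locally, it suffices to fix $\mathfrak p$ and $\mathfrak P\mid\mathfrak p$. For each possible value $m=v_\mathfrak p(\mathfrak n)=a(\pi_\mathfrak p)$ and each type of $\pi_\mathfrak p$ (principal series with trivial central character, special, dihedral, exceptional) allowed by (H1)--(H3), we list the value $a((\pi_\mathfrak p)_{K_\mathfrak P})$ produced by \cref{cor:unbase}, \cref{thm:tame}, \cref{tab:dyadic} and \cref{tab:dyadic-exceptional}. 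We then read off which $m$ can produce a given $v_{\mathfrak P}(\mathfrak N)$. The necessity of condition (2) in the preamble is just Galois invariance: $(\pi_\mathfrak p)_{K_{\mathfrak P}}$ and $(\pi_\mathfrak p)_{K_{\sigma\mathfrak P}}$ are conjugate.

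For $\mathfrak p\notin\mathcal R$ the extension $K_\mathfrak P/F_\mathfrak p$ is trivial or unramified. By \cref{cor:unbase} the conductor is preserved, which gives item (1).

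For $\mathfrak p\in\mathcal R$, the generic regime is $m>2\mathfrak d_\mathfrak P$. Here every table gives $2(m-\mathfrak d_\mathfrak P)$ (this also covers the odd case with $\mathfrak d=1$ and $m>2$). So $v_\mathfrak P(\mathfrak N)$ is even and $m=v_\mathfrak P(\mathfrak N)/2+\mathfrak d_\mathfrak P$, which is item (3) and necessary condition (1). One must also check the converse: if $v_\mathfrak P(\mathfrak N)>2\mathfrak d$, then $m$ cannot be $\le 2\mathfrak d$. This holds because for $m<2\mathfrak d$ the output is $m<2\mathfrak d$, and for $m=2\mathfrak d$ the output is at most $2\mathfrak d$ by the tables.

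The small values come from a finite enumeration of $m\le 2\mathfrak d_\mathfrak P$. For $m<2\mathfrak d$ the output is $m$ itself; unramified-induced dihedrals are excluded by (H2). For $m=2\mathfrak d$ the output is $0$, $1$, $m-2$ or $m$ in the imprimitive case (\cref{tab:dyadic}, with (H3) removing the $d(L/F)=2$, $m=6$ exception), and $3$, $4$ or $5$ in the exceptional case.

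\begin{itemize}
\item For $\mathfrak d=2$: $m\in\{0,1,2,3\}$ maps to itself. At $m=4$, principal series gives $0$ or $2$, special gives $1$ or $2$, dihedral gives $4$, and exceptional gives $3$. The minimal conductors are $0$ (unramified), $1$ (Steinberg), $2$ and $3$.
\item For $\mathfrak d=3$: outputs $0$ to $5$ arise from $m<6$. At $m=6$ the outputs are $0$, $1$, $4$ (non-supercuspidal), $6$ (dihedral), and $3$, $4$, $5$ (exceptional).
\end{itemize}

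Collecting preimages then yields items (2) and (4)--(9). For example, output $3$ with $\mathfrak d=2$ arises from $m=3$ or $m=4$; output $4$ with $\mathfrak d=3$ arises from $m=4$, from the $a(\pi_0)=3$, $\chi\ne w$ exceptional case at $m=6$, and from principal series at $m=6$. Output $5$ does not come from $m=5$ in the exceptional case: by the theorem on minimal exceptionals with $5<6$ the output would be $5$, so it is consistent. Condition (2) should come from showing that output $2$ is impossible when $\mathfrak d=3$. I would check this by noting that a character of conductor $1$ does not exist over $\mathbb Q_2$, so a principal series with trivial central character has even conductor $\ne 2$... though $m=2$ seems to give output $2$ via \cref{Characters}. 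This point needs care: I would recheck whether $m=2$ is realizable under (H1), and otherwise verify the principal series case $m=6$ with $a(\chi)=3$, which gives $2a(\chi_K)=4$ rather than $2$.

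The main obstacle is exactly this bookkeeping at $m=2\mathfrak d$ and at the low values. The claimed lists, in particular the exclusion of $v_\mathfrak P(\mathfrak N)=2$ for $\mathfrak d=3$ and the omission of some $m<2\mathfrak d$ from items (5)--(9), must match the existence or non-existence of representations of each conductor with trivial central character over $\mathbb Q_2$. I would settle this by explicitly enumerating, over $\mathbb Q_2$, the representations of conductor $\le 6$ permitted by (H1)--(H3). Finally, I would note that the statement is only a necessary condition plus a list of possible $\mathfrak n$, so no existence argument for global forms is required.
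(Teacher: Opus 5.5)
Your route (fix $\mathfrak p$, tabulate $a((\pi_\mathfrak p)_{K_\mathfrak P})$ over all types allowed by (H1)--(H3) from \cref{tab:tame}, \cref{tab:dyadic}, \cref{tab:dyadic-exceptional}, then invert) is what the paper intends; it states this theorem without a separate proof. Your treatment of items (1)--(3) and necessary condition (1) is fine. The problems are in the small-conductor bookkeeping, which is the whole content of condition (2) and items (4)--(9).

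\textbf{Condition (2): $m=2$ is not realizable.} You treat $m=2$ as realizable (``$m\in\{0,1,2,3\}$ maps to itself'', ``outputs $0$ to $5$ arise from $m<6$''). That would produce $v_{\mathfrak P}(\mathfrak N)=2$ when $\mathfrak d_{\mathfrak P}=3$ and refute condition (2), and you leave the tension open. The missing point is that at a dyadic prime, where $F_\mathfrak p\cong\mathbb Q_2$, one has $\mathfrak o^\times=U^1$, so no character has conductor $1$. Consequently:
\begin{itemize}
\item an (H1) principal series has conductor $2a(\chi)\in\{0,4,6,\dots\}$;
\item $\mathrm{St}\otimes\chi$ has conductor $1$ or $2a(\chi)\ge 4$;
\item a dihedral $\pi(L,\varkappa)$ with $L$ ramified has conductor $a(\varkappa)+d(L/F)\ge 2d(L/F)\ge 4$;
\item exceptional representations have conductor at least $3$.
\end{itemize}
The only remaining conductor-$2$ representations are principal series with ramified central character and depth-zero supercuspidals induced from the unramified extension. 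These are removed by (H1) and (H2) respectively, the latter because $2\le 2\mathfrak d_{\mathfrak P}$. So $m=2$ never occurs. Together with your correct list $\{0,1,3,4,5,6\}$ of outputs at $m=6$ and the bound $2(m-3)\ge 8$ for $m>6$, this proves condition (2).

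A related slip: at $\mathfrak d_{\mathfrak P}=2$, $m=4$, principal series and special representations give only $0$ and $1$, not ``$0$ or $2$'' and ``$1$ or $2$''. By \cref{Prop:char} and the remark after it, when $d(K/F)=2$ every conductor-$2$ character agrees with $w_{K/F}$ on $\mathfrak o_F^\times$.

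\textbf{Items (4), (7), (9): the inversion does not give the printed lists.} The claim ``collecting preimages then yields items (2) and (4)--(9)'' is false, and your own enumeration shows it.
\begin{itemize}
\item At $\mathfrak d_{\mathfrak P}=2$ you correctly record that the dihedral $\pi(L,\varkappa)$ with $d(L/F)=2$, $a(\varkappa)=2$ has $m=4$ and output $4$. Nothing else produces $4$, and $m=2$ does not exist. So inversion gives $v_{\mathfrak p}(\mathfrak n)=4$, not the printed $2$ in item (4); presumably $2\mathfrak d_{\mathfrak P}$ was meant.
\item At $\mathfrak d_{\mathfrak P}=3$ the preimage of $4$ is $\{4,6\}$. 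Indeed $m=5$ (the dihedral with $d(L/F)=2$, $a(\varkappa)=3$, or a minimal exceptional) has output $5$, not $4$.
\item At $\mathfrak d_{\mathfrak P}=3$ the preimage of $6$ is $\{6\}$, since $m=3$ and $m=4$ have outputs $3$ and $4$.
\end{itemize}
So items (7) and (9) hold only as non-sharp containments, and item (4) cannot be obtained from your argument at all. A correct proof must carry out the inversion explicitly and record these corrections, rather than assert agreement with the printed lists. Your sentence about ``output $5$ not coming from $m=5$'' is self-contradictory and does not explain why $5$ appears in item (7).
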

\printbibliography

@book {MR554237,
    AUTHOR = {Serre, Jean-Pierre},
     TITLE = {Local fields},
    SERIES = {Graduate Texts in Mathematics},
    VOLUME = {67},
      NOTE = {Translated from the French by Marvin Jay Greenberg},
 PUBLISHER = {Springer-Verlag, New York-Berlin},
      YEAR = {1979},
     PAGES = {viii+241},
      ISBN = {0-387-90424-7},
   MRCLASS = {12Bxx},
  MRNUMBER = {554237},
}

@book {MR1007299,
    AUTHOR = {Arthur, James and Clozel, Laurent},
     TITLE = {Simple algebras, base change, and the advanced theory of the
              trace formula},
    SERIES = {Annals of Mathematics Studies},
    VOLUME = {120},
 PUBLISHER = {Princeton University Press, Princeton, NJ},
      YEAR = {1989},
     PAGES = {xiv+230},
      ISBN = {0-691-08517-X},
   MRCLASS = {22E55 (11F70 11F72 11R39)},
  MRNUMBER = {1007299},
MRREVIEWER = {Stephen\ Gelbart},
}

@book {MR2234120,
    AUTHOR = {Bushnell, Colin J. and Henniart, Guy},
     TITLE = {The local {L}anglands conjecture for {$\rm GL(2)$}},
    SERIES = {Grundlehren der mathematischen Wissenschaften [Fundamental
              Principles of Mathematical Sciences]},
    VOLUME = {335},
 PUBLISHER = {Springer-Verlag, Berlin},
      YEAR = {2006},
     PAGES = {xii+347},
      ISBN = {978-3-540-31486-8},
   MRCLASS = {22E50 (11-02 11S37 22-02)},
  MRNUMBER = {2234120},
MRREVIEWER = {Alexandru\ Ioan\ Badulescu},
       DOI = {10.1007/3-540-31511-X},
       URL = {https://doi.org/10.1007/3-540-31511-X},
}

@book {MR427267,
    AUTHOR = {Weil, Andr\'e},
     TITLE = {Basic number theory},
    SERIES = {Die Grundlehren der mathematischen Wissenschaften},
    VOLUME = {Band 144},
   EDITION = {Third},
 PUBLISHER = {Springer-Verlag, New York-Berlin},
      YEAR = {1974},
     PAGES = {xviii+325},
   MRCLASS = {12-02},
  MRNUMBER = {427267},
}

@book {MR1697859,
    AUTHOR = {Neukirch, J\"urgen},
     TITLE = {Algebraic number theory},
    SERIES = {Grundlehren der mathematischen Wissenschaften [Fundamental
              Principles of Mathematical Sciences]},
    VOLUME = {322},
      NOTE = {Translated from the 1992 German original and with a note by
              Norbert Schappacher,
              With a foreword by G. Harder},
 PUBLISHER = {Springer-Verlag, Berlin},
      YEAR = {1999},
     PAGES = {xviii+571},
      ISBN = {3-540-65399-6},
   MRCLASS = {11Rxx (11-02 11S15 11S31 14C40)},
  MRNUMBER = {1697859},
MRREVIEWER = {Cornelius\ Greither},
       DOI = {10.1007/978-3-662-03983-0},
       URL = {https://doi.org/10.1007/978-3-662-03983-0},
}

@book {MR1915966,
    AUTHOR = {Fesenko, I. B. and Vostokov, S. V.},
     TITLE = {Local fields and their extensions},
    SERIES = {Translations of Mathematical Monographs},
    VOLUME = {121},
   EDITION = {Second},
      NOTE = {With a foreword by I. R. Shafarevich},
 PUBLISHER = {American Mathematical Society, Providence, RI},
      YEAR = {2002},
     PAGES = {xii+345},
      ISBN = {0-8218-3259-X},
   MRCLASS = {11Sxx (11S31)},
  MRNUMBER = {1915966},
MRREVIEWER = {Jerzy\ Browkin},
       DOI = {10.1090/mmono/121},
       URL = {https://doi-org.ezproxy.lib.ou.edu/10.1090/mmono/121},
}

@article {MR4705658,
    AUTHOR = {\v{C}esnavi\v{c}ius, K\k{e}stutis and Neururer, Michael and
              Saha, Abhishek},
     TITLE = {The {M}anin constant and the modular degree},
   JOURNAL = {J. Eur. Math. Soc. (JEMS)},
  FJOURNAL = {Journal of the European Mathematical Society (JEMS)},
    VOLUME = {26},
      YEAR = {2024},
    NUMBER = {2},
     PAGES = {573--637},
      ISSN = {1435-9855,1435-9863},
   MRCLASS = {11G05 (11F70 11F80 11F85 11G18 11L05)},
  MRNUMBER = {4705658},
MRREVIEWER = {John\ T.\ Cullinan},
       DOI = {10.4171/jems/1367},
       URL = {https://doi.org/10.4171/jems/1367},
}

@article {MR476703,
    AUTHOR = {Tunnell, Jerrold B.},
     TITLE = {On the local {L}anglands conjecture for {$GL(2)$}},
   JOURNAL = {Invent. Math.},
  FJOURNAL = {Inventiones Mathematicae},
    VOLUME = {46},
      YEAR = {1978},
    NUMBER = {2},
     PAGES = {179--200},
      ISSN = {0020-9910,1432-1297},
   MRCLASS = {12B25 (10D15 22E50)},
  MRNUMBER = {476703},
MRREVIEWER = {K.\ Shiratani},
       DOI = {10.1007/BF01393255},
       URL = {https://doi.org/10.1007/BF01393255},
}

@article {MR253990,
    AUTHOR = {Doi, Koji and Naganuma, Hidehisa},
     TITLE = {On the functional equation of certain {D}irichlet series},
   JOURNAL = {Invent. Math.},
  FJOURNAL = {Inventiones Mathematicae},
    VOLUME = {9},
    year   = {1969/70},
     PAGES = {1--14},
      ISSN = {0020-9910,1432-1297},
   MRCLASS = {10.22},
  MRNUMBER = {253990},
MRREVIEWER = {O.\ H.\ K\"orner},
       DOI = {10.1007/BF01389886},
       URL = {https://doi.org/10.1007/BF01389886},
}

@book {MR406936,
    AUTHOR = {Saito, Hiroshi},
     TITLE = {Automorphic forms and algebraic extensions of number fields},
      NOTE = {Department of Mathematics, Kyoto University, Lectures in
              Mathematics, No. 8},
 PUBLISHER = {Kinokuniya Book Store Co., Ltd., Tokyo},
      YEAR = {1975},
     PAGES = {iv+183},
   MRCLASS = {10D20 (14D20)},
  MRNUMBER = {406936},
MRREVIEWER = {Takuro\ Shintani},
}

@article {MR337789,
    AUTHOR = {Casselman, William},
     TITLE = {On some results of {A}tkin and {L}ehner},
   JOURNAL = {Math. Ann.},
  FJOURNAL = {Mathematische Annalen},
    VOLUME = {201},
      YEAR = {1973},
     PAGES = {301--314},
      ISSN = {0025-5831,1432-1807},
   MRCLASS = {10D15 (22E50)},
  MRNUMBER = {337789},
MRREVIEWER = {T.\ Miyake},
       DOI = {10.1007/BF01428197},
       URL = {https://doi.org/10.1007/BF01428197},
}

@article {MR620708,
    AUTHOR = {Jacquet, H. and Piatetski-Shapiro, I. I. and Shalika, J.},
     TITLE = {Conducteur des repr\'esentations du groupe lin\'eaire},
   JOURNAL = {Math. Ann.},
  FJOURNAL = {Mathematische Annalen},
    VOLUME = {256},
      YEAR = {1981},
    NUMBER = {2},
     PAGES = {199--214},
      ISSN = {0025-5831,1432-1807},
   MRCLASS = {22E50 (10D40 12A67)},
  MRNUMBER = {620708},
MRREVIEWER = {J.\ Tunnell},
       DOI = {10.1007/BF01450798},
       URL = {https://doi.org/10.1007/BF01450798},
}

@book {MR574808,
    AUTHOR = {Langlands, Robert P.},
     TITLE = {Base change for {${\rm GL}(2)$}},
    SERIES = {Annals of Mathematics Studies},
    VOLUME = {No. 96},
 PUBLISHER = {Princeton University Press, Princeton, NJ; University of Tokyo
              Press, Tokyo},
      YEAR = {1980},
     PAGES = {vii+237},
      ISBN = {0-691-08263-4},
   MRCLASS = {10D40 (10-02 12A67 22E55)},
  MRNUMBER = {574808},
MRREVIEWER = {Stephen\ Gelbart},
}

@incollection {MR546613,
    AUTHOR = {G\'erardin, P. and Labesse, J.-P.},
     TITLE = {The solution of a base change problem for {${\rm GL}(2)$}\
              (following {L}anglands, {S}aito, {S}hintani)},
 BOOKTITLE = {Automorphic forms, representations and {$L$}-functions
              ({P}roc. {S}ympos. {P}ure {M}ath., {O}regon {S}tate {U}niv.,
              {C}orvallis, {O}re., 1977), {P}art 2},
    SERIES = {Proc. Sympos. Pure Math.},
    VOLUME = {XXXIII},
     PAGES = {115--133},
 PUBLISHER = {Amer. Math. Soc., Providence, RI},
      YEAR = {1979},
      ISBN = {0-8218-1437-0},
   MRCLASS = {10D40 (22E55)},
  MRNUMBER = {546613},
MRREVIEWER = {Stephen\ Gelbart},
}

@article {MR1685898,
    AUTHOR = {Bushnell, Colin J. and Henniart, Guy},
     TITLE = {Local tame lifting for {${\rm GL}(n)$}. {II}. {W}ildly
              ramified supercuspidals},
   JOURNAL = {Ast\'erisque},
  FJOURNAL = {Ast\'erisque},
    NUMBER = {254},
      YEAR = {1999},
     PAGES = {vi+105},
      ISSN = {0303-1179,2492-5926},
   MRCLASS = {11S37 (11F70 22E50)},
  MRNUMBER = {1685898},
MRREVIEWER = {Marko\ Tadi\'c},
}

@article {MR2130587,
    AUTHOR = {Bushnell, Colin J. and Henniart, Guy},
     TITLE = {Local tame lifting for {${\rm GL}(n)$}. {III}. {E}xplicit base
              change and {J}acquet-{L}anglands correspondence},
   JOURNAL = {J. Reine Angew. Math.},
  FJOURNAL = {Journal f\"ur die Reine und Angewandte Mathematik. [Crelle's
              Journal]},
    VOLUME = {580},
      YEAR = {2005},
     PAGES = {39--100},
      ISSN = {0075-4102,1435-5345},
   MRCLASS = {11S37 (11F70 22E50)},
  MRNUMBER = {2130587},
MRREVIEWER = {Marko\ Tadi\'c},
       DOI = {10.1515/crll.2005.2005.580.39},
       URL = {https://doi.org/10.1515/crll.2005.2005.580.39},
}

@incollection {MR546611,
    AUTHOR = {Shintani, Takuro},
     TITLE = {On liftings of holomorphic cusp forms},
 BOOKTITLE = {Automorphic forms, representations and {$L$}-functions
              ({P}roc. {S}ympos. {P}ure {M}ath., {O}regon {S}tate {U}niv.,
              {C}orvallis, {O}re., 1977), {P}art 2},
    SERIES = {Proc. Sympos. Pure Math.},
    VOLUME = {XXXIII},
     PAGES = {97--110},
 PUBLISHER = {Amer. Math. Soc., Providence, RI},
      YEAR = {1979},
      ISBN = {0-8218-1437-0},
   MRCLASS = {10D40 (22E50)},
  MRNUMBER = {546611},
MRREVIEWER = {Stephen\ Gelbart},
}

@article {MR4522693,
    AUTHOR = {Atobe, Hiraku and Kondo, Satoshi and Yasuda, Seidai},
     TITLE = {Local newforms for the general linear groups over a
              non-archimedean local field},
   JOURNAL = {Forum Math. Pi},
  FJOURNAL = {Forum of Mathematics. Pi},
    VOLUME = {10},
      YEAR = {2022},
     PAGES = {Paper No. e24, 56},
      ISSN = {2050-5086},
   MRCLASS = {11F70 (22E50)},
  MRNUMBER = {4522693},
MRREVIEWER = {Solomon\ Friedberg},
       DOI = {10.1017/fmp.2022.17},
       URL = {https://doi.org/10.1017/fmp.2022.17},
}

@book {MR401654,
    AUTHOR = {Jacquet, H. and Langlands, R. P.},
     TITLE = {Automorphic forms on {${\rm GL}(2)$}},
    SERIES = {Lecture Notes in Mathematics},
    VOLUME = {Vol. 114},
 PUBLISHER = {Springer-Verlag, Berlin-New York},
      YEAR = {1970},
     PAGES = {vii+548},
   MRCLASS = {10D15 (12A65 12A70 22E55)},
  MRNUMBER = {401654},
MRREVIEWER = {Stephen\ Gelbart},
}

@article {MR2869056,
    AUTHOR = {Loeffler, David and Weinstein, Jared},
     TITLE = {On the computation of local components of a newform},
   JOURNAL = {Math. Comp.},
  FJOURNAL = {Mathematics of Computation},
    VOLUME = {81},
      YEAR = {2012},
    NUMBER = {278},
     PAGES = {1179--1200},
      ISSN = {0025-5718,1088-6842},
   MRCLASS = {11F70 (11F11)},
  MRNUMBER = {2869056},
MRREVIEWER = {Nathan\ C.\ Ryan},
       DOI = {10.1090/S0025-5718-2011-02530-5},
       URL = {https://doi.org/10.1090/S0025-5718-2011-02530-5},
}

@article {MR4356848,
    AUTHOR = {Roy, Manami},
     TITLE = {Paramodular forms coming from elliptic curves},
   JOURNAL = {J. Number Theory},
  FJOURNAL = {Journal of Number Theory},
    VOLUME = {233},
      YEAR = {2022},
     PAGES = {126--157},
      ISSN = {0022-314X,1096-1658},
   MRCLASS = {11F46 (11F70 11G07 14H52)},
  MRNUMBER = {4356848},
MRREVIEWER = {F.\ Cl\'ery},
       DOI = {10.1016/j.jnt.2021.06.007},
       URL = {https://doi.org/10.1016/j.jnt.2021.06.007},
}

@article {MR4878181,
    AUTHOR = {Banerjee, Debargha and Mandal, Tathagata and Mondal, Sudipa},
     TITLE = {Two properties of symmetric cube transfers of modular forms},
   JOURNAL = {J. Number Theory},
  FJOURNAL = {Journal of Number Theory},
    VOLUME = {275},
      YEAR = {2025},
     PAGES = {160--195},
      ISSN = {0022-314X,1096-1658},
   MRCLASS = {11F70 (11F80)},
  MRNUMBER = {4878181},
MRREVIEWER = {Oliver\ Stein},
       DOI = {10.1016/j.jnt.2024.12.013},
       URL = {https://doi.org/10.1016/j.jnt.2024.12.013},
}

@article {MR721997,
    AUTHOR = {Tunnell, Jerrold B.},
     TITLE = {Local {$\epsilon $}-factors and characters of {${\rm GL}(2)$}},
   JOURNAL = {Amer. J. Math.},
  FJOURNAL = {American Journal of Mathematics},
    VOLUME = {105},
      YEAR = {1983},
    NUMBER = {6},
     PAGES = {1277--1307},
      ISSN = {0002-9327,1080-6377},
   MRCLASS = {22E35 (11S37 20G05)},
  MRNUMBER = {721997},
MRREVIEWER = {Joe\ Repka},
       DOI = {10.2307/2374441},
       URL = {https://doi.org/10.2307/2374441},
}

@article {MR466025,
    AUTHOR = {Kudla, Stephen S.},
     TITLE = {Theta-functions and {H}ilbert modular forms},
   JOURNAL = {Nagoya Math. J.},
  FJOURNAL = {Nagoya Mathematical Journal},
    VOLUME = {69},
      YEAR = {1978},
     PAGES = {97--106},
      ISSN = {0027-7630,2152-6842},
   MRCLASS = {10D20 (10C15)},
  MRNUMBER = {466025},
MRREVIEWER = {Hiroshi\ Saito},
       URL = {http://projecteuclid.org/euclid.nmj/1118796620},
}

@article {MR4269428,
    AUTHOR = {Dieulefait, Luis Victor and Pacetti, Ariel and Tsaknias,
              Panagiotis},
     TITLE = {On the number of {G}alois orbits of newforms},
   JOURNAL = {J. Eur. Math. Soc. (JEMS)},
  FJOURNAL = {Journal of the European Mathematical Society (JEMS)},
    VOLUME = {23},
      YEAR = {2021},
    NUMBER = {8},
     PAGES = {2833--2860},
      ISSN = {1435-9855,1435-9863},
   MRCLASS = {11F03 (11F11)},
  MRNUMBER = {4269428},
MRREVIEWER = {Spencer\ Hamblen},
       DOI = {10.4171/jems/1073},
       URL = {https://doi.org/10.4171/jems/1073},
}

@article {MR3911789,
    AUTHOR = {Kumar, Balesh and Manickam, Murugesan},
     TITLE = {On {D}oi-{N}aganuma and {S}himura liftings},
   JOURNAL = {Ramanujan J.},
  FJOURNAL = {Ramanujan Journal. An International Journal Devoted to the
              Areas of Mathematics Influenced by Ramanujan},
    VOLUME = {48},
      YEAR = {2019},
    NUMBER = {2},
     PAGES = {279--303},
      ISSN = {1382-4090,1572-9303},
   MRCLASS = {11F11 (11F32 11F37 11F41)},
  MRNUMBER = {3911789},
MRREVIEWER = {Moni\ Kumari},
       DOI = {10.1007/s11139-017-9958-6},
       URL = {https://doi.org/10.1007/s11139-017-9958-6},
}

@article {MR734781,
    AUTHOR = {Kutzko, P. C.},
     TITLE = {The exceptional representations of {${\rm Gl}\sb{2}$}},
   JOURNAL = {Compositio Math.},
  FJOURNAL = {Compositio Mathematica},
    VOLUME = {51},
      YEAR = {1984},
    NUMBER = {1},
     PAGES = {3--14},
      ISSN = {0010-437X,1570-5846},
   MRCLASS = {11S37 (22E55)},
  MRNUMBER = {734781},
MRREVIEWER = {Martin\ L.\ Karel},
       URL = {http://www.numdam.org/item?id=CM_1984__51_1_3_0},
}

@book {MR506171,
    AUTHOR = {Buhler, Joe P.},
     TITLE = {Icosahedral {G}alois representations},
    SERIES = {Lecture Notes in Mathematics},
    VOLUME = {Vol. 654},
 PUBLISHER = {Springer-Verlag, Berlin-New York},
      YEAR = {1978},
     PAGES = {ii+143},
      ISBN = {3-540-08844-X},
   MRCLASS = {12A70 (10D05)},
  MRNUMBER = {506171},
MRREVIEWER = {Ted\ Chinburg},
}
\end{document}